\begin{document}

\newtheorem{theorem}{Theorem}
\newtheorem{proposition}{Proposition}
\newtheorem{lemma}{Lemma}
\newtheorem{corollary}{Corollary}
\newtheorem{definition}{Definition}
\newtheorem{remark}{Remark}
\newtheorem{remarks}{Remarks}
\newcommand{\tex}{\textstyle}
\numberwithin{equation}{section} \numberwithin{theorem}{section}
\numberwithin{proposition}{section} \numberwithin{lemma}{section}
\numberwithin{corollary}{section}
\numberwithin{definition}{section} \numberwithin{remark}{section}
\newcommand{\ren}{\mathbb{R}^N}
\newcommand{\re}{\mathbb{R}}
\newcommand{\dyle}{\displaystyle}
\newcommand{\n}{\nabla}
\newcommand{\p}{\partial}
\newcommand{\iy}{\infty}
\newcommand{\pa}{\partial}
\newcommand{\fp}{\noindent}
\newcommand{\ms}{\medskip\vskip-.1cm}
\newcommand{\mpb}{\medskip}
\newcommand{\AAA}{{\bf A}}
\newcommand{\BB}{{\bf B}}
\newcommand{\CC}{{\bf C}}
\newcommand{\DD}{{\bf D}}
\newcommand{\EE}{{\bf E}}
\newcommand{\FF}{{\bf F}}
\newcommand{\GG}{{\bf G}}
\newcommand{\oo}{{\mathbf \omega}}
\newcommand{\Am}{{\bf A}_{2m}}
\newcommand{\CCC}{{\mathbf  C}}
\newcommand{\II}{{\mathrm{Im}}\,}
\newcommand{\RR}{{\mathrm{Re}}\,}
\newcommand{\eee}{{\mathrm  e}}

\newcommand{\cN}{{\mathcal{N}}}

\newcommand{\LL}{L^2_\rho(\ren)}
\newcommand{\LLL}{L^2_{\rho^*}(\ren)}
\renewcommand{\a}{\alpha}
\renewcommand{\b}{\beta}
\newcommand{\g}{\gamma}
\newcommand{\G}{\Gamma}
\renewcommand{\d}{\delta}
\newcommand{\D}{\Delta}
\newcommand{\e}{\varepsilon}
\newcommand{\var}{\varphi}
\renewcommand{\l}{\lambda}
\renewcommand{\o}{\omega}
\renewcommand{\O}{\Omega}
\newcommand{\s}{\sigma}
\renewcommand{\t}{\tau}
\renewcommand{\th}{\theta}
\newcommand{\z}{\zeta}
\newcommand{\wx}{\widetilde x}
\newcommand{\wt}{\widetilde t}
\newcommand{\noi}{\noindent}
\newcommand{\uu}{{\bf u}}
\newcommand{\xx}{{\bf x}}
\newcommand{\yy}{{\bf y}}
\newcommand{\zz}{{\bf z}}
\newcommand{\aaa}{{\bf a}}
\newcommand{\cc}{{\bf c}}
\newcommand{\jj}{{\bf j}}
\newcommand{\UU}{{\bf U}}
\newcommand{\YY}{{\bf Y}}
\newcommand{\HH}{{\bf H}}
\newcommand{\GGG}{{\bf G}}
\newcommand{\VV}{{\bf V}}
\newcommand{\ww}{{\bf w}}
\newcommand{\vv}{{\bf v}}
\newcommand{\hh}{{\bf h}}
\newcommand{\di}{{\rm div}\,}
\newcommand{\ii}{{\rm i}\,}
\def\I{{\rm Id}}
\newcommand{\inA}{\quad \mbox{in} \quad \ren \times \re_+}
\newcommand{\inB}{\quad \mbox{in} \quad}
\newcommand{\inC}{\quad \mbox{in} \quad \re \times \re_+}
\newcommand{\inD}{\quad \mbox{in} \quad \re}
\newcommand{\forA}{\quad \mbox{for} \quad}
\newcommand{\whereA}{,\quad \mbox{where} \quad}
\newcommand{\asA}{\quad \mbox{as} \quad}
\newcommand{\andA}{\quad \mbox{and} \quad}
\newcommand{\withA}{,\quad \mbox{with} \quad}
\newcommand{\orA}{,\quad \mbox{or} \quad}
\newcommand{\atA}{\quad \mbox{at} \quad}
\newcommand{\onA}{\quad \mbox{on} \quad}
\newcommand{\ef}{\eqref}
\newcommand{\mc}{\mathcal}
\newcommand{\mf}{\mathfrak}

\newcommand{\Ge}{\G_\e}
\newcommand{\Hn}{ H^{1}(\Rn)}
\newcommand{\Wn}{W^{1,2}(\Rn)}
\newcommand{\Wan}{W^{\frac{\a}{2},2}(\Rn)}
\newcommand{\Wa}{W^{\frac{\a}{2},2}(\R)}
\newcommand{\intn}{\int_{\Rn}}
\newcommand{\intR}{\int_\R}
\newcommand{\ie}{I_\e}
\newcommand{\nie}{\n \ie}
\newcommand{\gie}{I_\e'}
\newcommand{\ies}{I_\e''}
\newcommand{\ios}{I_0''}
\newcommand{\ip}{I'_0}

\newcommand{\zex}{z_{\e,\rho}}
\newcommand{\wex}{w_{\e,\xi}}
\newcommand{\zer}{z_{\e,\rho}}
\newcommand{\wer}{w_{\e,\rho}}
\newcommand{\dzex}{{\dot{z}}_{\e,\rho}}
\newcommand{\bE}{{\bf E}}
\newcommand{\bu}{{\bf u}}
\newcommand{\bv}{{\bf v}}
\newcommand{\bz}{{\bf z}}
\newcommand{\bw}{{\bf w}}
\newcommand{\bo}{{\bf 0}}
\newcommand{\bp}{{\bf \phi}}
\newcommand{\up}{\underline{\phi}}
\newcommand{\bh}{{\bf h}}


\newcommand{\E}{\mathbb{E}}
\newcommand{\X}{\mathbb{X}}
\newcommand{\F}{\mathbb{F}}
\newcommand{\Y}{\mathbb{Y}}
\newcommand{\M}{\mathbb{M}}
\newcommand{\h}{\mathbb{H}}
\newcommand{\ssk}{\smallskip}
\newcommand{\LongA}{\quad \Longrightarrow \quad}
\def\com#1{\fbox{\parbox{6in}{\texttt{#1}}}}
\def\N{{\mathbb N}}
\def\A{{\cal A}}
\newcommand{\de}{\,d}
\newcommand{\eps}{\varepsilon}
\newcommand{\be}{\begin{equation}}
\newcommand{\ee}{\end{equation}}
\newcommand{\spt}{{\mbox spt}}
\newcommand{\ind}{{\mbox ind}}
\newcommand{\supp}{{\mbox supp}}
\newcommand{\dip}{\displaystyle}
\newcommand{\prt}{\partial}
\renewcommand{\theequation}{\thesection.\arabic{equation}}
\renewcommand{\baselinestretch}{1.1}
\newcommand{\Dm}{(-\D)^m}

\newenvironment{pf}{\noindent{\it
Proof}.\enspace}{\rule{2mm}{2mm}\medskip}

\newcommand{\lapa}{(-\Delta)^{\alpha/2}}

\title
{\bf Positive solutions for semilinear fractional elliptic problems involving an inverse fractional operator}

\author{P.~\'Alvarez-Caudevilla, E.~Colorado and Alejandro Ortega}

\address{Departamento de Matem\'aticas, Universidad Carlos III de Madrid,
Av. Universidad 30,
28911 Legan\'es (Madrid), Spain}
\email{pacaudev@math.uc3m.es}

\address{Departamento de Matem\'aticas, Universidad Carlos III de Madrid,
Av. Universidad 30,
28911 Legan\'es (Madrid), Spain}
\email{ecolorad@math.uc3m.es}

\address{Departamento de Matem\'aticas, Universidad Carlos III de Madrid,
Av. Universidad 30,
28911 Legan\'es (Madrid), Spain}
\email{alortega@math.uc3m.es}

\thanks{This paper has been partially supported by the Ministry of Economy and Competitiveness of
Spain and FEDER, under research project MTM2016-80618-P}

\thanks{The first author was also partially supported by the Ministry of Economy and Competitiveness of
Spain under research project RYC-2014-15284}

\date{\today}


\begin{abstract}
This paper is devoted to the study of the existence of positive solutions for a problem related to a higher order fractional differential equation involving a nonlinear term depending on a fractional differential operator,
\begin{equation*}
\left\{
\begin{tabular}{lcl}
$(-\Delta)^{\alpha} u=\lambda u+ (-\Delta)^{\beta}|u|^{p-1}u$ & &in $\Omega$, \\
  $\mkern+3mu(-\Delta)^{j}u=0$                                        & &on $\partial\Omega$, for $j\in\mathbb{Z}$, $0\leq j< [\alpha]$,
\end{tabular}
\right.
\end{equation*}
where $\Omega$ is a bounded domain in $\mathbb{R}^{N}$, $0<\beta<1$,
$\beta<\alpha<\beta+1$ and $\lambda>0$. In particular, we study the fractional elliptic problem,
\begin{equation*}
        \left\{
        \begin{array}{ll}
        (-\Delta)^{\alpha-\beta} u= \lambda(-\Delta)^{-\beta}u+ |u|^{p-1}u  & \hbox{in} \quad \Omega, \\
        \mkern+72.2mu  u=0 & \hbox{on} \quad \partial\Omega,
        \end{array}
        \right.
\end{equation*}
and we prove existence or nonexistence of positive solutions depending on the parameter $\lambda>0$,
up to the critical value of the exponent $p$, i.e., for $1<p\leq 2_{\mu}^*-1$ where $\mu:=\alpha-\beta$ and $2_{\mu}^*=\frac{2N}{N-2\mu}$ is the critical
exponent of the Sobolev embedding.
\end{abstract}
\maketitle
\noindent {\it \footnotesize 2010 Mathematics Subject Classification}. {\scriptsize 35A15, 35G20, 35J61, 49J35.}\\
{\it \footnotesize Key words}. {\scriptsize Fractional Laplacian, Critical Problem, Concentration-Compactness Principle, Mountain Pass Theorem}

\section{Introduction}\label{sec:intro}
\noindent  Let $\O$ be a smooth bounded domain of $\ren$ with $N>2\mu$ and
$$\mu:=\alpha-\beta\quad \hbox{with} \quad 0<\beta<1\quad \hbox{and}\quad \beta<\alpha<\beta+1.$$
We analyze the existence of positive solutions for the following fractional elliptic problem,
\begin{equation}\label{ecuacion}
         \left\{
        \begin{array}{ll}
        (-\Delta)^{\alpha-\beta} u= \g(-\Delta)^{-\beta}u+ |u|^{p-1}u  & \hbox{in} \quad \Omega, \\
    u= 0 & \hbox{on} \quad \partial\Omega,
        \end{array}
        \right.
                \tag{$P_\gamma$}
\end{equation}
depending on the real parameter $\gamma>0$. To this end, we consider,
\begin{equation*}
1<p\leq2_{\mu}^*-1=\frac{N+2\mu}{N-2\mu},
\end{equation*}
where $2_{\mu}^*=\frac{2N}{N-2\mu}$ is the critical exponent of the Sobolev embedding. Associated with \eqref{ecuacion} we have the following Euler--Lagrange functional:
\begin{equation}
\label{funcional_ecuacion}
    \mc{F}_\g(u)=\frac{1}{2}\int_\O|(-\Delta)^{\frac{\mu}{2}} u|^2dx-\frac{\g}{2} \int_\O |(-\D)^{-\frac{\beta}{2}}u|^2\, dx-\frac{1}{p+1} \int_\O |u|^{p+1}dx,
\end{equation}
 such that the solutions of \eqref{ecuacion} corresponds to critical  points of the $C^1$ functional \eqref{funcional_ecuacion} and vice versa.

Note that $(-\D)^{-\beta}$ is  a positive linear integral compact operator from $L^2(\O)$ into itself and it is well defined thanks to the Spectral Theorem.
The definition of the fractional powers of the positive Laplace operator $(-\Delta)$, in a bounded domain $\Omega$ with
homogeneous Dirichlet boundary data, can be carried out through the spectral decomposition using the powers of the eigenvalues of $(-\Delta)$
with the same boundary conditions. Indeed, let $(\varphi_i,\lambda_i)$ be the eigenfunctions (normalized with respect to the $L^2(\O)$-norm) and eigenvalues of $(-\Delta)$ under homogeneous
Dirichlet boundary data. Then, $(\varphi_i,\lambda_i^{\mu})$ stand for the eigenpairs of $(-\Delta)^{\mu}$ under homogeneous
Dirichlet boundary conditions as well. Thus, the fractional operator $(-\Delta)^{\mu}$ is well defined in the space of functions that vanish on the boundary,
\begin{equation*}
H_0^{\mu}(\Omega)=\left\{u=\sum_{j=1}^{\infty} a_j\varphi_j\in L^2(\Omega):\ \|u\|_{H_0^{\mu}(\Omega)}=\left(\sum_{j=1}^{\infty} a_j^2\lambda_j^{\mu} \right)^{\frac{1}{2}}<\infty\right\}.
\end{equation*}
As a result of this definition it follows that,
\begin{equation}\label{eqnorma}
\|u\|_{H_0^{\mu}(\O)}=\|(-\Delta)^{\frac{\mu}{2}}u\|_{L^2(\O)}.
\end{equation}
In particular,
\begin{equation*}
(-\Delta)^{-\beta}u=\sum_{j=1}^{\infty} a_j\lambda_j^{-\beta}\varphi_j.
\end{equation*}
Since the above definition allows us to integrate by parts, we say that $u\in H_0^{\mu}(\O)$ is an energy or weak solution for problem \eqref{ecuacion} if,
\begin{equation*}
\int_{\Omega}(-\Delta)^{\frac{\mu}{2}}u(-\Delta)^{\frac{\mu}{2}}\phi dx=\gamma\int_{\O}(-\Delta)^{-\frac{\beta}{2}}u(-\Delta)^{-\frac{\beta}{2}}\phi dx+\int_{\O}|u|^{p-1}u\phi dx,\quad \forall\phi\in H_0^{\mu}(\O).
\end{equation*}
In other words, $u\in H_0^{\mu}(\O)$ is a critical point of the functional defined by \eqref{funcional_ecuacion}. We also observe that
the functional embedding features for the equation in \eqref{ecuacion} are governed by the  Sobolev's embedding Theorem. Let us recall the compact inclusion,
\begin{equation}
\label{compact_emb}
H_0^{\mu}(\O)  \hookrightarrow L^{p+1}(\O),\quad 2\leq p+1<2_{\mu}^*,
\end{equation}
being a continuous inclusion up to the critical exponent $p=2_{\mu}^*-1$.\newline
To define non-integer higher-order powers for the Laplace operator, let us recall that the homogeneous Navier boundary conditions are defined as
\begin{equation*}
u=\Delta u=\Delta^2 u=\ldots=\Delta^{k-1} u=0,\quad\mbox{on }\partial\Omega.
\end{equation*}
Given $\alpha>1$, the $\alpha$-th power of the classical Dirichlet Laplacian in the sense of the spectral theory can be defined as the operator
whose action on a smooth function $u$ satisfying the homogenous Navier boundary conditions for $0\leq k<[\alpha]$ (where $[\cdot]$ means the integer part), is given by
\begin{equation*}
\langle (-\Delta)^{\alpha} u, u \rangle=\sum_{j\ge 1}
\lambda_j^{\alpha}|\langle u_1,\varphi_j\rangle|^2.
\end{equation*}
We refer to \cite{MusNa2,MusNa3} for a study of this higher-order fractional Laplace operator, referred to as the Navier fractional Laplacian, as well as useful properties of the fractional Sobolev space $H_0^{\alpha}(\Omega)$.\newline
On the other hand, we have a connection between problem \eqref{ecuacion} and a fractional order
elliptic system which turns out to be very useful in the sequel. In particular, taking $\tex{\psi:=(-\D)^{-\beta}u}$, problem \eqref{ecuacion} provides us with the fractional elliptic  cooperative system,
\begin{equation}
\label{cosys}
    \left\{\begin{array}{l}
    (-\Delta)^{\mu}u = \gamma \psi+|u|^{p-1}u,\\
    (-\D)^{\beta}\psi=u,
    \end{array}\right.\quad \hbox{in}\quad \O,\quad (u,\psi)=(0,0)\quad\hbox{in}\quad \partial\O.
\end{equation}
Nevertheless, system \eqref{cosys} is not a variational system. In order to obtain a variational system from problem \eqref{ecuacion} we follow a similar idea to the one performed above,
distinguishing whether $\alpha=2\beta$ or $\alpha\neq2\beta$. In the first case we take $\tex{v:=\sqrt{\gamma}\psi}$ and, recalling that $\mu:=\alpha-\beta$, we obtain the following fractional elliptic cooperative system,
\begin{equation}
\label{sistemabb}
\left\{\begin{array}{l}
(-\Delta)^{\beta}u=\sqrt{\gamma}v+|u|^{p-1}u,\\
(-\Delta)^{\beta}v=\sqrt{\gamma}u,
\end{array}
\right.
\tag{$S_{\gamma}^{\beta}$}
\quad \hbox{in}\quad \O,\quad (u,v)=(0,0)\quad\hbox{on}\quad \partial\O,
\end{equation}
whose associated energy functional is
\begin{equation*}
    \mc{J}_{\g}^{\beta}(u,v)=\frac{1}{2} \int_\O |(-\D)^{\frac{\beta}{2}} u|^2dx + \frac{1}{2} \int_\O |(-\D)^{\frac{\beta}{2}} v|^2dx -\sqrt{\gamma}\int_\O uvdx -\frac{1}{p+1} \int_\O |u|^{p+1}dx.
\end{equation*}
In the second case, $\alpha\neq2\beta$, taking $v=\gamma^{\beta/\alpha}\psi$ we obtain the system,
\begin{equation*}
\left\{\begin{array}{rl}
(-\Delta)^{\mu}u=&\!\!\!\gamma^{1-\beta/\alpha}v+|u|^{p-1}u,\\
(-\Delta)^{\beta}v=&\!\!\!\gamma^{\beta/\alpha}u,
\end{array}
\right.
\quad \hbox{in}\quad \O,\quad (u,v)=(0,0)\quad\hbox{on}\quad \partial\O.
\end{equation*}
Since the former system is still not variational, we transform it into the following variational system,
\begin{equation}
\label{sistemaab}
\left\{\begin{array}{rl}
\frac{1}{\gamma^{1-\beta/\alpha}}(-\Delta)^{\mu} u =&\!\!\! v+\frac{1}{\gamma^{1-\beta/\alpha}}|u|^{p-1}u,\\
\frac{1}{\gamma^{\beta/\alpha}}(-\Delta)^{\beta}v=&\!\!\! u,
\end{array}
\right.
\tag{$S_{\gamma}^{\alpha,\beta}$}
\quad \hbox{in}\quad \O,\quad (u,v)=(0,0)\quad\hbox{on}\quad \partial\O.
\end{equation}
whose associated functional is
\begin{align*}
\mc{J}_{\g}^{\alpha,\beta}(u,v)=&\frac{1}{2\gamma^{1-\beta/\alpha}}\int_\O|(-\D)^{\frac{\mu}{2}}u|^2dx +\frac{1}{2\gamma^{\beta/\alpha}}\int_\O|(-\D)^{\frac{\beta}{2}}v|^2dx-\int_\O uv dx\\
&-\frac{1}{(p+1)\gamma^{1-\beta/\alpha}} \int_\O |u|^{p+1}dx.
\end{align*}
We will use the equivalence between problem \eqref{ecuacion} and systems
\eqref{sistemabb} and \eqref{sistemaab} to surpass the difficulties that arise while working with the inverse fractional Laplace operator $(-\Delta)^{-\beta}$. In particular, this approach will help us to avoid ascertaining explicit
estimations for this inverse term. On the other hand, to overcome the usual difficulties that appear when dealing with fractional
Laplace operators we will use the ideas of Caffarelli and Silvestre \cite{CS}, together
with those performed in \cite{BrCdPS}, giving an equivalent definition of the fractional operator $(-\Delta)^{\mu}$
in a bounded domain $\Omega$ by means of an auxiliary problem that we will introduce below.
Associated with the domain $\Omega$ let us consider the cylinder
$\mathcal{C}_{\Omega}=\Omega\times(0,\infty)\subset\mathbb{R}_+^{N+1}$ called extension cylinder.
Moreover, we denote by $(x,y)$ the points belonging to $\mathcal{C}_{\Omega}$ and with
$\partial_L\mathcal{C}_{\Omega}=\partial\Omega\times(0,\infty)$ the lateral boundary
of the extension cylinder. Thus, given a function $u\in H_{0}^{\mu}(\Omega)$, define the
$\mu$-harmonic extension function $w$, denoted by $w:=E_{\mu}[u]$, as the solution to problem,
\begin{equation*}
        \left\{
        \begin{array}{ll}
        -{\rm div}(y^{1-2\mu}\nabla w)=0 & \hbox{in} \quad \mathcal{C}_{\Omega}, \\
        w=0 & \hbox{on}\quad \partial_L\mathcal{C}_{\Omega}, \\
        w(x,0)=u(x) & \hbox{in} \quad \Omega\times\{y=0\}.
        \end{array}
        \right.
\end{equation*}
This extension function $w$ belongs to the space
\begin{equation*}
\mathcal{X}_0^{\mu}(\mathcal{C}_{\Omega})=\overline{\mathcal{C}_0^{\infty}(\Omega\times[0,\infty))}^{\|\cdot\|_{\mathcal{X}_0^{\mu}(\mathcal{C}_{\Omega})}},\ \text{with}\ \|w\|_{\mathcal{X}_0^{\mu}(\mathcal{C}_{\Omega})}^2=\kappa_{\mu}\int_{\mathcal{C}_{\Omega}}y^{1-2\mu}|\nabla w(x,y)|^2dxdy.
\end{equation*}
With that constant  $\kappa_{\mu}$, whose precise value can be seen in \cite{BrCdPS}, the extension operator is an isometry between $H_0^{\mu}(\Omega)$ and
$\mathcal{X}_0^{\mu}(\mathcal{C}_{\Omega})$ in the sense
\begin{equation}\label{isometry}
\|E_{\mu}[\varphi]\|_{\mathcal{X}_0^{\mu}(\mathcal{C}_{\Omega})}=\|\varphi\|_{H_0^{\mu}(\Omega)},\ \text{for all}\ \varphi\in H_0^{\mu}(\Omega).
\end{equation}
The relevance of the extension function $w$ is that it is related to the fractional
Laplacian of the original function through the formula
\begin{equation*}
\frac{\partial w}{\partial \nu^{\mu}}:= -\kappa_{\mu} \lim_{y\to
0^+} y^{1-2\mu}\frac{\partial w}{\partial y}=(-\Delta)^{\mu}u(x).
\end{equation*}
In the case $\Omega=\mathbb{R}^N$ this formulation provides us with explicit expressions for both the fractional Laplacian and the $\mu$-extension in terms of the Riesz and the Poisson kernels respectively. Precisely,
\begin{equation*}
\begin{split}
 & (-\Delta)^{\mu}u(x)=\ d_{N,\mu}P.V.\int_{\mathbb{R}^N}\frac{u(x)-u(y)}{|x-y|^{N+2\mu}}dy\\
& w(x,y)=\ P_y^{\mu}\ast u(x)=c_{N,\mu}y^{2\mu}\int_{\mathbb{R}^N}\frac{u(z)}{(|x-z|^2+y^2)^{\frac{N+2\mu}{2}}}dz.
\end{split}
\end{equation*}
For exact values of the constants $c_{N,\mu}$ and $d_{N,\mu}$ we refer to \cite{BrCdPS}.
Thanks to the arguments shown above, we can reformulate problem \eqref{ecuacion} in terms of the extension problem as follows,
\begin{equation}\label{extension_problem}
        \left\{
        \begin{array}{ll}
        -{\rm div}(y^{1-2\mu}\nabla w)=0  & \hbox{in}\quad \mathcal{C}_{\Omega}, \\
        w=0   & \hbox{on}\quad \partial_L\mathcal{C}_{\Omega}, \\
        \frac{\partial w}{\partial \nu^\mu}=\gamma (-\Delta)^{-\beta}w+|w|^{p-1}w & \hbox{in}\quad \Omega\times\{y=0\}.
        \end{array}
        \right.
        \tag{$\tilde{P}_{\gamma}$}
\end{equation}
Therefore, an energy or weak solution of this problem is a function $w\in \mathcal{X}_0^{\mu}(\mathcal{C}_{\Omega})$ satisfying
\begin{equation*}
\kappa_{\mu}\int_{\mathcal{C}_{\Omega}} y^{1-2\mu}\langle\nabla w,\nabla\varphi \rangle dxdy=\int_{\Omega} \left(\gamma (-\Delta)^{-\beta}w+|w|^{p-1}\o\right)\varphi(x,0)dx,\quad \forall\varphi\in\mathcal{X}_0^{\mu}(\mathcal{C}_{\Omega}).
\end{equation*}
For any energy solution $w\in \mathcal{X}_0^{\mu}(\mathcal{C}_{\Omega})$
to problem \eqref{extension_problem}, the corresponding trace function $u=Tr[w]=w(\cdot,0)$ belongs to the space $H_0^{\mu}(\Omega)$ and is an energy solution for the problem
\eqref{ecuacion} and vice versa. If $u\in H_0^{\mu}(\Omega)$ is an energy solution of \eqref{ecuacion}, then $w:=E_\mu[u]\in \mathcal{X}_0^{\mu}(\mathcal{C}_{\Omega})$ is
an energy solution for \eqref{extension_problem} and, as a consequence, both formulations are equivalent. Finally, the energy functional associated with problem \eqref{extension_problem} is
\begin{equation*}
\widetilde{\mc{F}}_{\gamma}(w)=\frac{\kappa_{\mu}}{2}\int_{\mathcal{C}_{\Omega}}y^{1-2\mu}|\nabla w|^2dxdy-\frac{\gamma}{2}\int_{\Omega}|(-\Delta)^{-\frac{\beta}{2}}w|^2dx-\frac{1}{p+1}\int_{\Omega}|w|^{p+1}dx.
\end{equation*}
Since the extension operator is an isometry, critical points of $\widetilde{\mc{F}}_{\gamma}$ in $\mathcal{X}_0^{\mu}(\mathcal{C}_{\Omega})$ correspond to critical
points of the functional $\mc{F}_{\gamma}$ in $H_0^{\mu}(\Omega)$. Indeed, arguing as in \cite[Proposition 3.1]{BCdPS},
the minima of $\widetilde{\mc{F}}_{\gamma}$ also correspond to the minima of the functional $\mc{F}_{\gamma}$.

Another useful tool to be applied throughout this work will be the following trace inequality,
\begin{equation}\label{sobext}
\int_{\mathcal{C}_{\Omega}}y^{1-2\mu}|\nabla \phi(x,y)|^2dxdy\geq C\left(\int_{\Omega}|\phi(x,0)|^rdx\right)^{\frac{2}{r}},\quad\forall\phi\in \mathcal{X}_{0}^{\mu}(\mathcal{C}_{\Omega}),
\end{equation}
with $1\leq r\leq\frac{2N}{N-2\mu},\ N>2\mu$. Let us notice that, since the extension operator is an isometry, inequality \eqref{sobext} is equivalent to the fractional Sobolev inequality,
\begin{equation}\label{sobolev}
\int_{\Omega}|(-\Delta)^{\mu/2}\varphi|^2dx\geq C\left(\int_{\Omega}|\varphi|^rdx\right)^{\frac{2}{r}},\quad\forall\varphi\in H_{0}^{\mu}(\Omega),
\end{equation}
with $1\leq r\leq\frac{2N}{N-2\mu}$, $N>2\mu$.
\begin{remark} When $r=2_{\mu}^*$, the best constant in \eqref{sobext} will be denoted by $S(\mu,N)$. This constant is explicit and independent of the domain $\Omega$. Indeed,
its exact value is given by the expression
\begin{equation*}
S(\mu,N)=\frac{2\pi^\mu\Gamma(1-\mu)\Gamma(\frac{N+2\mu}{2})(\Gamma(\frac{N}{2}))^{\frac{2\mu}{N}}}{\Gamma(\mu)\Gamma(\frac{N-2\mu}{2})(\Gamma(N))^\mu},
\end{equation*}
and it is never achieved when $\Omega$ is a bounded domain. Thus, we have,
\begin{equation*}
\int_{\mathbb{R}_{+}^{N+1}}\!\!y^{1-2\mu}|\nabla \phi(x,y)|^2dxdy\geq S(\mu,N)\left(\int_{\mathbb{R}^{N}}|\phi(x,0)|^{\frac{2N}{N-2\mu}}dx\right)^{\frac{N-2\mu}{N}}\  \forall \phi\in \mathcal{X}_0^\mu(\mathbb{R}_{+}^{N+1}).
\end{equation*}
If $\Omega=\mathbb{R}^N$, the constant $S(\mu,N)$ is achieved for the family of extremal functions $w_{\varepsilon}^{\mu}= E_\mu[v_{\varepsilon}^{\mu}]$ with
\begin{equation}\label{u_eps}
v_{\varepsilon}^{\mu}(x)=\frac{\varepsilon^{\frac{N-2\mu}{2}}}{(\varepsilon^2+|x|^2)^{\frac{N-2\mu}{2}}},
\end{equation}
for arbitrary $\varepsilon>0$; see \cite{BrCdPS} for further details. Finally, combining the previous comments, the
best constant in \eqref{sobolev} with $\Omega=\mathbb{R}^N$ is given then by $\kappa_\mu S(\mu,N)$.
\end{remark}
Although systems \eqref{sistemabb} and \eqref{sistemaab} no longer contain an inverse term as $(-\D)^{-\beta}$ they still are non-local systems, with all the complications that this entails.
However, we use the extension technique shown above to reformulate the non-local systems \eqref{sistemabb} and \eqref{sistemaab} in terms of the following local systems.
Taking $w:=E_{\mu}[u]$ and $z:=E_{\beta}[v]$, the extension system corresponding to \eqref{sistemabb} reads as
\begin{equation}\label{extension_systembb}
        \left\{
        \begin{array}{ll}
        -{\rm div}(y^{1-2\beta}\nabla w)= 0 & \hbox{in}\quad  \mathcal{C}_{\Omega}, \\
        -{\rm div}(y^{1-2\beta}\nabla z)=0 & \hbox{in}\quad \mathcal{C}_{\Omega}, \\
        \displaystyle\frac{\partial w}{\partial \nu^{\beta}}= \sqrt{\gamma} z+|w|^{p-1}w & \hbox{in}\quad \Omega\times\{y=0\},\\
        \displaystyle\frac{\partial z}{\partial \nu^{\beta}}= \sqrt{\gamma} w & \hbox{in}\quad \Omega\times\{y=0\},\\
        w=z= 0 & \hbox{on}\quad \partial_L\mathcal{C}_{\Omega},
        \end{array}
        \right.
        \tag{$\widetilde{S}_{\gamma}^{\beta}$}
\end{equation}
whose associated functional is
\begin{equation*}
\begin{split}
    \Phi_{\g}^{\beta}(w,z)&=\frac{\kappa_{\beta}}{2}\int_{\mathcal{C}_{\Omega}}y^{1-2\beta}|\nabla w|^2dxdy+\frac{\kappa_{\beta}}{2}\int_{\mathcal{C}_{\Omega}}y^{1-2\beta}|\nabla z|^2dxdy-\sqrt{\gamma}\int_{\Omega}w(x,0)z(x,0)dx\\&-\frac{1}{p+1}\int_{\Omega}|w(x,0)|^{p+1}dx.
        \end{split}
\end{equation*}
Since the extension function is an isometry, critical points for the functional $\Phi_{\g}^{\beta}$
in $\mathcal{X}_0^{\beta}(\mathcal{C}_{\Omega})\times\mathcal{X}_0^{\beta}(\mathcal{C}_{\Omega})$ correspond to critical points of $\mc{J}_{\g}^{\beta}$
in $H_0^{\beta}(\Omega)\times H_0^{\beta}(\Omega)$. Moreover, arguing as in \cite[Proposition 3.1]{BCdPS},  the minima of $\Phi_{\g}^{\beta}$
also correspond to the minima of $\mc{J}_{\g}^{\beta}$. Similarly, the extension system of system \eqref{sistemaab} reads as
\begin{equation}\label{extension_systemab}
        \left\{
        \begin{array}{ll}
        -{\rm div}(y^{1-2\mu}\nabla w)= 0 & \hbox{in}\quad \mathcal{C}_{\Omega}, \\
        -{\rm div}(y^{1-2\beta}\nabla z)= 0 & \hbox{in}\quad \mathcal{C}_{\Omega}, \\
        \displaystyle\frac{1}{\gamma^{1-\beta/\alpha}}\frac{\partial w}{\partial \nu^{\mu}}= z+\frac{1}{\gamma^{1-\beta/\alpha}}|w|^{p-1}w & \hbox{in}\quad \Omega\times\{y=0\},\\
        \displaystyle\frac{1}{\gamma^{\beta/\alpha}}\frac{\partial z}{\partial \nu^{\beta}}=w & \hbox{in}\quad \Omega\times\{y=0\},\\
        w=z=0 & \hbox{on}\quad \partial_L\mathcal{C}_{\Omega},
        \end{array}
        \right.
        \tag{$\widetilde{S}_{\gamma}^{\alpha,\beta}$}
\end{equation}
whose associated functional is
\begin{equation*}
\begin{split}
    \Phi_{\g}^{\alpha,\beta}(w,z)=&\frac{\kappa_{\mu}}{2\gamma^{1-\beta/\alpha}}\int_{\mathcal{C}_{\Omega}}y^{1-2\mu}|\nabla w|^2dxdy+\frac{\kappa_{\beta}}{2\gamma^{\beta/\alpha}}\int_{\mathcal{C}_{\Omega}}y^{1-2\beta}|\nabla z|^2dxdy-\int_{\Omega}w(x,0)z(x,0)dx\\&-\frac{1}{(p+1)\gamma^{1-\beta/\alpha}}\int_{\Omega}w(x,0)^{p+1}dx.
        \end{split}
\end{equation*}
Once again, since the extension function is an isometry, critical points of $\Phi_{\g}^{\alpha,\beta}$ in
$\mathcal{X}_0^{\mu}(\mathcal{C}_{\Omega})\times\mathcal{X}_0^{\beta}(\mathcal{C}_{\Omega})$ correspond to critical points of $\mc{J}_{\g}^{\alpha,\beta}$
in $H_0^{\mu}(\Omega)\times H_0^{\beta}(\Omega)$, and also, minima of $\Phi_{\g}^{\alpha,\beta}$ correspond to minima of $\mc{J}_{\g}^{\alpha,\beta}$.

Before finishing this introductory section, let us observe that problem \eqref{ecuacion} can be seen as a linear perturbation of the critical problem,
\begin{equation} \label{crBC}
        \left\{
        \begin{array}{ll}
        (-\D)^{\mu}u=|u|^{2_{\mu}^*-2}u & \hbox{in} \quad\Omega, \\
        u=0 & \hbox{on}\quad \partial\Omega,
        \end{array}
        \right.
\end{equation}
for which, after applying a Pohozaev-type result \cite[Proposition 5.5]{BrCdPS}, one can prove the non-existence of positive solutions under the star-shapeness assumption on the domain $\Omega$.
Moreover, the limit case $\beta\to0$ in problem \eqref{ecuacion} corresponds to
\begin{equation}\label{bezero}
        \left\{
        \begin{tabular}{rll}
        $(-\D)^{\alpha}u=$ &\!\!\!$\g u+|u|^{2_{\alpha}^*-2}u$ &in $\Omega$, \\
        $u=$  &\!\!\!$0$ &on $\partial\Omega$,
        \end{tabular}
        \right.\quad\hbox{with}\quad 0<\alpha<1,
\end{equation}
which was studied in \cite{BCdPS}, where the existence of positive solutions is proved for $N\geq4\alpha$ if and only if $0<\gamma<\lambda_1^*$, with
$\lambda_1^*$ being first eigenvalue of the $(-\Delta)^{\alpha}$ operator under homogeneous Dirichlet boundary conditions.
Note that in our situation the non-local term $\gamma(-\D)^{-\beta}u$ plays actually the role of $\g u$ in \cite{BCdPS}.
\vspace{0.4cm}

\underline{\bf Main results.}
We ascertain the existence of positive solutions for the problem \eqref{ecuacion} depending on the positive real parameter $\g$.
To do so, we will first show the interval of the parameter $\g$ for which there is the possibility of having positive solutions.
Then, we use the equivalence between \eqref{ecuacion} and the systems \eqref{sistemabb} and \eqref{sistemaab} together with the extension technique to prove the main results of this work. Indeed, using the well-known Mountain Pass Theorem (MPT) \cite{AR}, we will prove that there exists a positive solution for \eqref{ecuacion} for any
$$0<\g<\lambda_1^*,$$
where $\lambda_1^*$ is the first eigenvalue of the operator $(-\Delta)^{\alpha}$ under homogeneous Dirichlet boundary conditions. If $1<p+1<2_{\mu}^*$ one might apply the MPT directly since, as we will show,
our problem possesses the mountain pass geometry and thanks to the compact
embedding \eqref{compact_emb} the Palais-Smale condition is satisfied for the functionals
$\mc{F}_\g$, $\mc{J}_{\g}^{\beta}$ and $\mc{J}_{\g}^{\alpha,\beta}$ (see details below in Section \ref{Sec:ProofTh0}). However, at the critical exponent $p=2_{\mu}^*-1$, the compactness of the
Sobolev embedding is lost and the problem becomes very delicate. To overcome this lack of compactness we apply a concentration-compactness argument relying on \cite[Theorem 5.1]{BCdPS}, which is an adaptation to the fractional setting of the classical result of P.-L. Lions, \cite{Lions}. Then we are capable of proving that, under certain conditions, the Palais-Smale condition is satisfied for the functionals $\Phi_{\g}^{\beta}$ and $\Phi_{\g}^{\alpha,\beta}$. Thus, by the arguments above, the result will also follow for the functionals $\mc{F}_\g$, $\mc{J}_{\g}^{\beta}$
and $\mc{J}_{\g}^{\alpha,\beta}$. Consequently, we state now the main results of this paper.

\begin{theorem}
\label{Th0}
Assume $1<p<2_{\mu}^*-1$. Then, for every $\g\in (0,\l_1^*)$, where $\lambda_1^*$ is the first eigenvalue of $(-\Delta)^{\alpha}$ under homogeneous Dirichlet boundary conditions,
there exists a positive solution for the problem \eqref{ecuacion}.
\end{theorem}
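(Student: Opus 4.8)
The plan is to obtain the positive solution as a mountain-pass critical point of a truncated version of $\mc{F}_\g$ (equivalently, via the isometries recorded above, of $\mc{J}_\g^\beta$, $\mc{J}_\g^{\alpha,\beta}$, $\widetilde{\mc{F}}_\g$, $\Phi_\g^\beta$ or $\Phi_\g^{\alpha,\beta}$), exploiting that for $1<p<2_\mu^*-1$ the embedding \eqref{compact_emb} is compact. The structural fact on which everything rests is the coercivity of the quadratic part: expanding $u=\sum_j a_j\varphi_j$ and using $\mu+\beta=\alpha$ gives $\int_\O|(-\D)^{\mu/2}u|^2\,dx-\g\int_\O|(-\D)^{-\beta/2}u|^2\,dx=\sum_j a_j^2\lambda_j^{-\beta}(\lambda_j^{\alpha}-\g)$, and since $\l_1^*=\lambda_1^{\alpha}=\min_j\lambda_j^{\alpha}$, for $\g\in(0,\l_1^*)$ this quantity is bounded below by $(1-\g/\l_1^*)\|u\|_{H_0^\mu(\O)}^2$. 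Thus $u\mapsto\int_\O|(-\D)^{\mu/2}u|^2-\g\int_\O|(-\D)^{-\beta/2}u|^2$ is an equivalent squared norm on $H_0^\mu(\O)$.

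Next I would set $\mc{F}_\g^+(u)=\tfrac12\int_\O|(-\D)^{\mu/2}u|^2\,dx-\tfrac\g2\int_\O|(-\D)^{-\beta/2}u|^2\,dx-\tfrac1{p+1}\int_\O(u^+)^{p+1}\,dx$ and check the mountain pass geometry: by the coercivity above and the Sobolev inequality \eqref{sobolev}, near the origin $\mc{F}_\g^+(u)\ge\tfrac12(1-\g/\l_1^*)\|u\|_{H_0^\mu(\O)}^2-C\|u\|_{H_0^\mu(\O)}^{p+1}$ with $p+1>2$, so $\mc{F}_\g^+\ge\rho>0$ on a small sphere; and along $t\varphi_1$ with $\varphi_1>0$ the first eigenfunction, $\mc{F}_\g^+(t\varphi_1)\to-\infty$, which produces a point $e$ past the sphere with negative energy. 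For the Palais--Smale condition at any level, a sequence with $\mc{F}_\g^+(u_n)\to c$, $(\mc{F}_\g^+)'(u_n)\to0$ satisfies $c+o(1)+o(1)\|u_n\|_{H_0^\mu(\O)}=\mc{F}_\g^+(u_n)-\tfrac1{p+1}\langle(\mc{F}_\g^+)'(u_n),u_n\rangle\ge(\tfrac12-\tfrac1{p+1})(1-\g/\l_1^*)\|u_n\|_{H_0^\mu(\O)}^2$, hence is bounded; then $u_n\rightharpoonup u$ in $H_0^\mu(\O)$, $u_n\to u$ in $L^{p+1}(\O)$ by \eqref{compact_emb} and in $L^2(\O)$ (so $(-\D)^{-\beta}u_n\to(-\D)^{-\beta}u$, the inverse operator acting as a compact perturbation), and testing $(\mc{F}_\g^+)'(u_n)$ against $u_n-u$ upgrades this to strong convergence in $H_0^\mu(\O)$. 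The Mountain Pass Theorem \cite{AR} then yields a critical point $u\neq0$ of $\mc{F}_\g^+$ at a level $c\ge\rho>0$.

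It remains to check that this $u$ is positive. Testing $(-\D)^{\mu}u=\g(-\D)^{-\beta}u+(u^+)^p$ against $u^-:=\max(-u,0)\in H_0^\mu(\O)$, using $\langle(-\D)^{\mu/2}u^+,(-\D)^{\mu/2}u^-\rangle\le0$ (immediate from the singular-integral representation of $(-\D)^\mu$, since $u^+u^-\equiv0$), that the nonlinear term drops out, and that $(-\D)^{-\beta}$ is positivity preserving (so $\langle(-\D)^{-\beta}u^+,u^-\rangle\ge0$), one arrives at $\|u^-\|_{H_0^\mu(\O)}^2\le\g\int_\O|(-\D)^{-\beta/2}u^-|^2\,dx$, which by the coercivity of the first step applied to $u^-$ forces $u^-\equiv0$. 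Hence $u\ge0$, so $(u^+)^p=|u|^{p-1}u$ and $u$ solves \eqref{ecuacion}; since then the right-hand side is nonnegative and not identically zero, the strong maximum principle for $(-\D)^\mu$ gives $u>0$ in $\O$. (Alternatively the positivity step can be run on the extended functional $\widetilde{\mc{F}}_\g$, testing with $w^-$ and using that $\int_{\mathcal{C}_\O}y^{1-2\mu}\nabla w^+\cdot\nabla w^-=0$, then transferring through the isometry \eqref{isometry}.) I expect no genuine obstacle in this subcritical regime: the only points demanding care are the precise identification of the threshold $\l_1^*$ with the spectrum of $(-\D)^\alpha$ in the coercivity step and the verification that the nonlocal term $(-\D)^{-\beta}$ disturbs neither the Palais--Smale analysis (compact perturbation) nor the positivity argument (positivity preserving); the real difficulty, the loss of compactness treated via concentration--compactness, arises only at the endpoint $p=2_\mu^*-1$ excluded from this statement.
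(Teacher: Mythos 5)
Your proposal is correct and follows essentially the same route as the paper: establish coercivity of the quadratic part on $(0,\lambda_1^*)$ via the spectral identity behind \eqref{bieigen}, verify the mountain pass geometry, obtain boundedness of Palais--Smale sequences and pass to a strong limit through the compact embedding \eqref{compact_emb}, invoke the Mountain Pass Theorem, and upgrade a nonnegative critical point to a positive one by the maximum principle. Two of your refinements are worth noting, since the paper is terse at precisely these points. First, you truncate only the nonlinearity, replacing $|u|^{p-1}u$ by $(u^+)^p$, whereas the paper writes $\mc{F}_\g^+(u)=\mc{F}_\g(u^+)$, which literally truncates the quadratic terms as well; your version is the standard one that keeps the functional $C^1$ on $H_0^{\mu}(\O)$, so critical points genuinely satisfy the Euler--Lagrange equation. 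Second, you make explicit the nonnegativity step (test with $u^-$, use $\langle(-\Delta)^{\mu/2}u^+,(-\Delta)^{\mu/2}u^-\rangle\le 0$ and positivity preservation of $(-\Delta)^{-\beta}$), which the paper dispatches with a single sentence. One small caveat: since the paper's $(-\Delta)^{\mu}$ is the spectral Dirichlet Laplacian rather than the integral fractional Laplacian, the inequality $\langle(-\Delta)^{\mu/2}u^+,(-\Delta)^{\mu/2}u^-\rangle\le0$ does not follow from the singular-integral representation as you first state; the correct justification in this setting is the one you give parenthetically, namely decomposing the extension $w=E_\mu[u]$ into $w^\pm$ in the cylinder, using $\int_{\mathcal{C}_\O}y^{1-2\mu}\nabla w^+\cdot\nabla w^-=0$, and comparing with the minimality property of the extension, so keep that as the primary argument.
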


\begin{theorem}
\label{Th1}
Assume $p=2_{\mu}^*-1$. Then, for every $\g\in (0,\l_1^*)$, where $\lambda_1^*$ is the first eigenvalue of $(-\Delta)^{\alpha}$ under homogeneous Dirichlet boundary conditions, there exists a positive solution for
the problem \eqref{ecuacion} provided that $N>4\alpha-2\beta$.
\end{theorem}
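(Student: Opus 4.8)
\ The plan is to rely on the equivalences set up in the Introduction. By the extension procedure and the correspondence between \eqref{ecuacion} and the variational systems \eqref{sistemabb}, \eqref{sistemaab}, it suffices to produce a nontrivial, nonnegative critical point of $\Phi_{\gamma}^{\alpha,\beta}$ on $\mathcal{X}_0^{\mu}(\mathcal{C}_{\Omega})\times\mathcal{X}_0^{\beta}(\mathcal{C}_{\Omega})$ when $\alpha\neq2\beta$, resp.\ of $\Phi_{\gamma}^{\beta}$ on $\mathcal{X}_0^{\beta}(\mathcal{C}_{\Omega})\times\mathcal{X}_0^{\beta}(\mathcal{C}_{\Omega})$ when $\alpha=2\beta$; the two cases are parallel and I describe the first. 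Minimizing $\Phi_{\gamma}^{\alpha,\beta}(w,z)$ in the variable $z$, which enters quadratically, one finds $\min_z\Phi_{\gamma}^{\alpha,\beta}(w,z)=\gamma^{\beta/\alpha-1}\,\widetilde{\mc{F}}_{\gamma}(w)$, so the mountain pass geometry of $\Phi_{\gamma}^{\alpha,\beta}$ reduces to that of $\widetilde{\mc{F}}_{\gamma}$. The latter holds because, writing $u=w(\cdot,0)=\sum_j a_j\varphi_j$, the quadratic form
\[
\|(-\Delta)^{\mu/2}u\|_{L^2(\Omega)}^2-\gamma\|(-\Delta)^{-\beta/2}u\|_{L^2(\Omega)}^2=\sum_j a_j^2\,\lambda_j^{-\beta}\bigl(\lambda_j^{\alpha}-\gamma\bigr)
\]
is positive definite, in fact coercive, precisely when $\gamma<\lambda_1^{\alpha}=\lambda_1^*$, so that $\widetilde{\mc{F}}_{\gamma}\geq\rho>0$ on a small sphere about the origin, while the term $-\tfrac{1}{p+1}\int_\Omega|u|^{p+1}dx$ drives $\widetilde{\mc{F}}_{\gamma}$ to $-\infty$ along rays; let $c_\gamma=\gamma^{\beta/\alpha-1}\,\widetilde c_\gamma>0$ denote the resulting minimax level, $\widetilde c_\gamma$ that of $\widetilde{\mc{F}}_{\gamma}$.

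Second, compactness is lost only at one scale. By the fractional concentration--compactness result \cite[Theorem 5.1]{BCdPS}, applied to the component $w$ carrying the critical nonlinearity (the $z$--equation being linear, $z$ is slaved to $w$ through $(-\Delta)^{\beta}$), every Palais--Smale sequence for $\Phi_{\gamma}^{\alpha,\beta}$ at a level strictly below
\[
c^*:=\gamma^{\beta/\alpha-1}\,\frac{\mu}{N}\bigl(\kappa_{\mu}S(\mu,N)\bigr)^{N/2\mu}
\]
is relatively compact. Hence the whole statement reduces to the strict inequality $c_\gamma<c^*$, equivalently $\widetilde c_\gamma<\tfrac{\mu}{N}\bigl(\kappa_{\mu}S(\mu,N)\bigr)^{N/2\mu}$.

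This is the heart of the proof. Fix $x_0\in\Omega$, a cutoff $\eta\in C_0^{\infty}(\Omega)$ with $\eta\equiv1$ near $x_0$, let $v_{\varepsilon}^{\mu}$ be the extremal functions \eqref{u_eps} recentred at $x_0$, and set $w_{\varepsilon}:=E_{\mu}[\eta\,v_{\varepsilon}^{\mu}]$. The classical cutoff expansions give
\[
\|w_{\varepsilon}\|_{\mathcal{X}_0^{\mu}(\mathcal{C}_{\Omega})}^2=S(\mu,N)^{N/2\mu}+O(\varepsilon^{N-2\mu}),\qquad \int_\Omega|w_{\varepsilon}(x,0)|^{2_{\mu}^*}dx=S(\mu,N)^{N/2\mu}+O(\varepsilon^{N}),
\]
while the homogeneity of the Riesz potential $I_{2\beta}$ yields, for the perturbation term,
\[
\int_\Omega\bigl|(-\Delta)^{-\beta/2}w_{\varepsilon}(x,0)\bigr|^2dx=C_0\,\varepsilon^{2\alpha}\bigl(1+o(1)\bigr),\qquad C_0>0,
\]
where $C_0$ is a positive multiple of $\iint_{\mathbb{R}^N\times\mathbb{R}^N}v_1^{\mu}(x)\,v_1^{\mu}(y)\,|x-y|^{-(N-2\beta)}\,dx\,dy$; this last integral converges \emph{exactly because} $N>4\alpha-2\beta$, since its integrand decays like $|x|^{-(N-2\mu)-(N-2\alpha)}$ as $|x|\to\infty$. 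Maximizing $t\mapsto\widetilde{\mc{F}}_{\gamma}(t\,w_{\varepsilon})$ over $t\geq0$ one then obtains
\[
\widetilde c_\gamma\leq\sup_{t\geq0}\widetilde{\mc{F}}_{\gamma}(t\,w_{\varepsilon})\leq\frac{\mu}{N}\bigl(\kappa_{\mu}S(\mu,N)\bigr)^{N/2\mu}-c\,\gamma\,\varepsilon^{2\alpha}+O(\varepsilon^{N-2\mu}),\qquad c>0,
\]
and since $N>4\alpha-2\beta$ is equivalent to $2\alpha<N-2\mu$, the $\varepsilon^{2\alpha}$--gain dominates the cutoff error for $\varepsilon$ small, giving $\widetilde c_\gamma<\tfrac{\mu}{N}\bigl(\kappa_{\mu}S(\mu,N)\bigr)^{N/2\mu}$, i.e.\ $c_\gamma<c^*$. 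I expect this expansion to be the main obstacle: one must control simultaneously the cutoff errors in the degenerate weighted Dirichlet energy and in the nonlocal quantity $(-\Delta)^{-\beta/2}$, and verify that no contribution of order $\varepsilon^{2\alpha}$ is lost, the estimate being tight precisely at the threshold $N=4\alpha-2\beta$.

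Finally, with $c_\gamma<c^*$ the Mountain Pass Theorem \cite{AR}, together with the Palais--Smale condition at the level $c_\gamma$, produces a critical point $(w,z)$ of $\Phi_{\gamma}^{\alpha,\beta}$, nontrivial because $c_\gamma>0$. Carrying out the construction with the nonlinearity truncated to its positive part (equivalently, taking the minimax class over competitors with nonnegative trace) one may assume $w\geq0$, and the strong maximum principle for the degenerate operator $-\mathrm{div}(y^{1-2\mu}\nabla\,\cdot\,)$ then forces $w>0$ in $\mathcal{C}_{\Omega}$; by the equivalences of the Introduction the trace $u=Tr[w]>0$ is a positive solution of \eqref{ecuacion}. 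The case $\alpha=2\beta$ is handled identically, now with $\Phi_{\gamma}^{\beta}$ (so that $\mu=\beta$), the symmetric coupling $\sqrt{\gamma}\int_\Omega w(x,0)z(x,0)\,dx$ and the extremals $v_{\varepsilon}^{\beta}$, the hypothesis $N>4\alpha-2\beta=6\beta$ playing exactly the same double role.
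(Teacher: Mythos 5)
Your reduction is sound: eliminating $z$ by minimizing the quadratic-in-$z$ part of $\Phi_{\gamma}^{\alpha,\beta}$ does produce $\gamma^{\beta/\alpha-1}\widetilde{\mc{F}}_{\gamma}(w)$, and since critical points of the reduced functional lift to critical points of the system, the strategy is legitimate. Both you and the paper then need to beat the critical level, and both arrive at $N>4\alpha-2\beta$ by comparing a gain from the linear term of order $\varepsilon^{2\alpha}$ against the cut-off error of order $\varepsilon^{N-2\mu}$. But the route to that gain is genuinely different, and the difference is the whole technical point of the paper. The authors explicitly say (end of the paragraph following system \eqref{sistemaab}) that the reason for passing to the system is \emph{to avoid ascertaining explicit estimations for the inverse term} $(-\Delta)^{-\beta}$. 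Their test pair is $(M\eta_{\varepsilon}^{\mu},\,M\rho\,\eta_{\varepsilon}^{\beta})$, with two \emph{distinct} Aubin--Talenti profiles linked by a free small parameter $\rho=\varepsilon^{\delta}$, and the only place the coupling enters is the cross-term $\int_{\Omega}\sigma_{\varepsilon}^{\mu}\sigma_{\varepsilon}^{\beta}\,dx$, which factors as $\|\sigma_{\varepsilon}^{\alpha/2}\|_{L^{2}(\Omega)}^{2}$ because $u_{\varepsilon}^{\mu}u_{\varepsilon}^{\beta}=(u_{\varepsilon}^{\alpha/2})^{2}$. This is an $L^{2}$-norm of a cut-off extremal, already estimated in Lemma \ref{estcol}; the inverse spectral operator never has to be touched. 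Your approach, by contrast, puts the hard object squarely back in the middle: you need
\[
\int_{\Omega}\bigl|(-\Delta)^{-\beta/2}\bigl(\eta\,v_{\varepsilon}^{\mu}\bigr)\bigr|^{2}\,dx \;=\; C_{0}\,\varepsilon^{2\alpha}\bigl(1+o(1)\bigr),\qquad C_{0}>0,
\]
where $(-\Delta)^{-\beta}$ is the \emph{spectral} inverse on the bounded domain, not the Riesz potential on $\ren$. The scaling heuristic and the convergence of $\iint v_{1}^{\mu}(x)v_{1}^{\mu}(y)|x-y|^{-(N-2\beta)}\,dx\,dy$ precisely under $N>4\alpha-2\beta$ are right, but to make them rigorous you must expand the Green's function $G_{\beta}^{\Omega}(x,y)=c_{0}|x-y|^{-(N-2\beta)}-H(x,y)$ and show the regular part contributes only $O(\|u_{\varepsilon}\|_{L^{1}}^{2})=O(\varepsilon^{N-2\mu})$, which is indeed lower order exactly when $N>4\alpha-2\beta$ — a tight and somewhat delicate verification. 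You flag this as ``the main obstacle'' and do not carry it out; the paper's two-component construction with the free exponent $\delta$ exists precisely so that this estimate never has to be made. So: correct skeleton, genuinely different route, but the one quantitative step on which your whole inequality rests is asserted rather than proved, and it is the step the paper was designed to avoid.
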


Let us observe that, even though problem \eqref{ecuacion} is a
non-local but also a linear perturbation of the critical problem
\eqref{crBC}, Theorem \ref{Th1} addresses dimensions
$N>4\alpha-2\beta$, in contrast to the existence result
\cite[Theorem 1.2]{BCdPS} about the linear perturbation
\eqref{bezero}, that covers the range $N\geq4\alpha$. In other
words, the non-local term $(-\Delta)^{-\beta}u$, despite of being
just a linear perturbation, has an important effect on the
dimensions for which the classical Brezis--Nirenberg technique (see
\cite{BN}) based on the minimizers of the Sobolev constant still
works. See details in Section \ref{Subsec:concentracion_compacidad}.
\section{Sub-critical case. Proof Theorem \ref{Th0} }\label{Sec:ProofTh0}
\noindent
In this section we carry out the proof of Theorem \ref{Th0}. This is done through the equivalence between problem \eqref{ecuacion} and systems \eqref{sistemabb} and \eqref{sistemaab}. We note
that the results proved in the sequel for the functionals $\mc{F}_\g$, $\mc{J}_{\g}^{\beta}$ and $\mc{J}_{\g}^{\alpha,\beta}$
translate immediately in analogous results for the functionals $\Phi_{\g}^{\beta}$ and $\Phi_{\g}^{\alpha,\beta}$.
First, we characterize the existence of positive solutions for problem  \eqref{ecuacion} in terms of the parameter $\gamma$. Moreover, for such
characterization
the following eigenvalue problem will be considered
\begin{equation}\label{eiglin1}
        \left\{
        \begin{array}{ll}
        (-\D)^{\mu} u = \l (-\D)^{-\beta} u& \hbox{in} \quad\Omega, \\
        u=0 & \hbox{on}\quad \partial\Omega.
        \end{array}
        \right.
\end{equation}
Then, for the first eigenfunction $\phi_1$ of \eqref{eiglin1}, associated with the first eigenvalue $\l_1^*$, we find
$$\int_\O |(-\D)^{\frac{\mu}{2}} \phi_1|^2dx =\l_1^* \int_\O |(-\D)^{-\frac{\beta}{2}} \phi_1|^2dx,$$
and, therefore,
\begin{equation}\label{bieigen}
\l_1^*=\inf_{u\in H_0^{\mu}(\O)} \frac{\int_\O |(-\D)^{\frac{\mu}{2}} u|^2dx}{ \int_\O |(-\D)^{-\frac{\beta}{2}} u|^2dx}.
\end{equation}
On the other hand, thanks to the definition of the fractional operator $(-\Delta)^{\mu}$, we have that $\phi_1\equiv\varphi_1$,
with $\varphi_1$ as the first eigenfunction of the Laplace operator under homogeneous Dirichlet boundary conditions. Then,
$$(-\D)^{\mu}\phi_1=(-\D)^{\mu}\varphi_1=\lambda_1^{\mu}\varphi_1 \quad\hbox{and}\quad  (-\D)^{-\beta}\phi_1=(-\D)^{-\beta}\varphi_1=\lambda_1^{-\beta}\varphi_1,$$
with $\lambda_1$ as the first eigenvalue of the Laplace operator
under homogeneous Dirichlet boundary conditions. Hence, due to
\eqref{eiglin1}, we conclude that
$\l_1^*=\l_1^{\mu+\beta}=\l_1^{\alpha}$. Thus, $\lambda_1^*$
coincides with the first eigenvalue of the operator
$(-\Delta)^{\alpha}$ under homogeneous Dirichlet or Navier boundary
conditions, depending on whether $\alpha\leq1$ or $1<\alpha<\beta+1$
respectively. As a consequence, we have the following.
\begin{lemma}\label{cota}
Problem \eqref{ecuacion} does not possess a positive solution when
$$\gamma \geq \l_1^*.$$
\end{lemma}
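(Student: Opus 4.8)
The plan is a one-line testing argument against the first eigenfunction, organized as a proof by contradiction. Suppose that for some $\gamma\geq\lambda_1^*$ there exists a positive energy solution $u\in H_0^{\mu}(\Omega)$ of \eqref{ecuacion}. Recall from the discussion preceding the statement that the first eigenfunction $\phi_1$ of \eqref{eiglin1} coincides with the first Dirichlet eigenfunction $\varphi_1$ of $-\Delta$, that $\lambda_1^*=\lambda_1^{\mu+\beta}$, and that $\varphi_1>0$ in $\Omega$; in particular $\phi_1\in H_0^{\mu}(\Omega)$ is an admissible test function. Inserting $\phi=\phi_1$ in the weak formulation of \eqref{ecuacion} gives
\[
\int_{\Omega}(-\Delta)^{\frac{\mu}{2}}u\,(-\Delta)^{\frac{\mu}{2}}\phi_1\,dx=\gamma\int_{\Omega}(-\Delta)^{-\frac{\beta}{2}}u\,(-\Delta)^{-\frac{\beta}{2}}\phi_1\,dx+\int_{\Omega}u^{p}\phi_1\,dx .
\]

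Next I would transfer the fractional operators onto $\phi_1$ using the spectral definition. Since $(-\Delta)^{\mu}\phi_1=\lambda_1^{\mu}\varphi_1$ and $(-\Delta)^{-\beta}\phi_1=\lambda_1^{-\beta}\varphi_1$ are genuine $L^2(\Omega)$ functions, the symmetry of the bilinear forms (which is immediate from the eigenfunction expansion $u=\sum a_j\varphi_j$, the relevant sums reducing to the single term $j=1$) yields
\[
\int_{\Omega}(-\Delta)^{\frac{\mu}{2}}u\,(-\Delta)^{\frac{\mu}{2}}\phi_1\,dx=\lambda_1^{\mu}\int_{\Omega}u\varphi_1\,dx,\qquad \int_{\Omega}(-\Delta)^{-\frac{\beta}{2}}u\,(-\Delta)^{-\frac{\beta}{2}}\phi_1\,dx=\lambda_1^{-\beta}\int_{\Omega}u\varphi_1\,dx .
\]
Since $\lambda_1^{\mu}=\lambda_1^{\mu+\beta}\lambda_1^{-\beta}=\lambda_1^{*}\lambda_1^{-\beta}$, substituting into the identity above and rearranging gives the key relation
\[
(\lambda_1^{*}-\gamma)\,\lambda_1^{-\beta}\int_{\Omega}u\varphi_1\,dx=\int_{\Omega}u^{p}\varphi_1\,dx .
\]

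Finally I would read off the sign contradiction. Because $u>0$ and $\varphi_1>0$ in $\Omega$ we have $\int_{\Omega}u\varphi_1\,dx>0$ and $\int_{\Omega}u^{p}\varphi_1\,dx>0$, and $\lambda_1^{-\beta}>0$; hence the right-hand side is strictly positive while the left-hand side is $\le 0$ whenever $\gamma\ge\lambda_1^{*}$ (it is $=0$ at $\gamma=\lambda_1^*$ and negative for $\gamma>\lambda_1^*$). This is impossible, so no positive solution exists for $\gamma\ge\lambda_1^{*}$, proving the lemma. I do not expect a genuine obstacle here: the only points requiring care are the formal ones — that $\phi_1=\varphi_1$ belongs to $H_0^{\mu}(\Omega)$ and that the spectral ``integration by parts'' moving $(-\Delta)^{\mu}$ and $(-\Delta)^{-\beta}$ onto $\phi_1$ is legitimate — both of which follow at once from the eigenbasis expansion since $\phi_1$ is a single eigenmode, together with the classical positivity of the principal Dirichlet eigenfunction.
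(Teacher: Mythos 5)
Your proof is correct and follows essentially the same route as the paper: test the weak formulation against the first Dirichlet eigenfunction $\varphi_1$, use the spectral identities $(-\Delta)^{\mu}\varphi_1=\lambda_1^{\mu}\varphi_1$ and $(-\Delta)^{-\beta}\varphi_1=\lambda_1^{-\beta}\varphi_1$ to move the operators onto $\varphi_1$, and read off a sign contradiction from the positivity of $u$ and $\varphi_1$. The paper states the argument directly as an inequality $\lambda_1^{\mu}\int u\varphi_1 > \gamma\lambda_1^{-\beta}\int u\varphi_1$ while you rearrange to $(\lambda_1^*-\gamma)\lambda_1^{-\beta}\int u\varphi_1=\int u^p\varphi_1>0$, but these are the same computation presented in contrapositive versus contradiction form.
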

\begin{proof}
Assume that $u$ is a positive solution of \eqref{ecuacion} and let $\varphi_1$ be a positive first eigenfunction of the Laplace operator in $\O$ under homogeneous Dirichlet boundary conditions.
Taking $\varphi_1$ as a test function for equation \eqref{ecuacion} we obtain
\begin{align*}
\lambda_1^{\mu}\int_{\Omega}u\varphi_1dx=\int_{\Omega}\varphi_1(-\Delta)^{\mu}udx &=\gamma\int_{\O}\varphi_1(-\Delta)^{-\beta}udx+\int_{\O}|u|^{p-1}u\varphi_1dx\\
&> \gamma\int_{\O}\varphi_1(-\Delta)^{-\beta}udx=\gamma\int_{\O}u(-\Delta)^{-\beta}\varphi_1dx\\
&=\frac{\gamma}{\lambda_1^{\beta}}\int_{\Omega}u\varphi_1dx.
\end{align*}
Hence, $\lambda_1^{\mu}>\frac{\gamma}{\lambda_1^{\beta}}$, and we conclude that $\gamma<\lambda_1^{\mu+\beta}=\lambda_1^{\alpha}=\lambda_1^*$, proving the lemma.
\end{proof}
Next we check that $\mc{F}_\g$, as well as $\mc{J}_{\g}^{\beta}$ and $\mc{J}_{\g}^{\alpha,\beta}$ satisfy the MP geometry.
\begin{lemma}
\label{lezero}
The functionals $\mc{F}_\g$, $\mc{J}_{\g}^{\beta}$ and $\mc{J}_{\g}^{\alpha,\beta}$ have the MP geometry .
\end{lemma}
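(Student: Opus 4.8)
The plan is to verify the two standard Mountain Pass hypotheses in the formulation of \cite{AR}: (i) there exist constants $\rho,\delta>0$ such that each functional is $\geq\delta$ on the sphere of radius $\rho$ in the relevant energy space, and (ii) there exists an element $e$ outside that ball (in fact along a fixed ray) where the functional takes a value $\leq 0$; since every functional clearly vanishes at the origin, this gives the geometry. Because the three functionals are mutually equivalent (the extension is an isometry, and the changes of variable $v=\sqrt\gamma\psi$ resp.\ $v=\gamma^{\beta/\alpha}\psi$ are linear isomorphisms), it suffices to carry out the argument carefully for $\mc{F}_\g$ and then indicate that the same computation, with the obvious modifications, applies verbatim to $\mc{J}_{\g}^{\beta}$ and $\mc{J}_{\g}^{\alpha,\beta}$.

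For step (i) with $\mc{F}_\g$: using \eqref{eqnorma} we write $\mc{F}_\g(u)=\tfrac12\|u\|_{H_0^\mu(\O)}^2-\tfrac{\g}{2}\int_\O|(-\D)^{-\beta/2}u|^2dx-\tfrac{1}{p+1}\int_\O|u|^{p+1}dx$. By the variational characterization \eqref{bieigen} of $\l_1^*$ we have $\g\int_\O|(-\D)^{-\beta/2}u|^2dx\leq \tfrac{\g}{\l_1^*}\|u\|_{H_0^\mu(\O)}^2$, so the quadratic part is bounded below by $\tfrac12\bigl(1-\tfrac{\g}{\l_1^*}\bigr)\|u\|_{H_0^\mu(\O)}^2$, and the hypothesis $\g<\l_1^*$ makes the constant strictly positive. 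For the superlinear term, the continuous embedding \eqref{compact_emb} (valid up to and including $p=2_\mu^*-1$ in its continuous form) gives $\int_\O|u|^{p+1}dx\leq C\|u\|_{H_0^\mu(\O)}^{p+1}$. Hence $\mc{F}_\g(u)\geq c_1\|u\|^2-c_2\|u\|^{p+1}$ with $c_1,c_2>0$ and $p+1>2$, which yields $\rho,\delta>0$ as required. For $\mc{J}_{\g}^{\beta}$ one replaces the nonlocal $(-\D)^{-\beta/2}$ term by the bilinear form $\sqrt\gamma\int_\O uv\,dx$ and uses Young's inequality together with $\l_1^\beta\|u\|_{L^2}^2\leq\|u\|_{H_0^\beta}^2$ to absorb it into the quadratic part; the constraint $\g<\l_1^*=\l_1^{2\beta}$ is exactly what makes the resulting quadratic form positive definite on $H_0^\beta(\O)\times H_0^\beta(\O)$. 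The analysis of $\mc{J}_{\g}^{\alpha,\beta}$ is identical after rescaling by the powers of $\gamma$.

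For step (ii): fix a nonzero, nonnegative $u_0\in H_0^\mu(\O)$ (say $u_0=\varphi_1$) and evaluate $\mc{F}_\g(tu_0)=\tfrac{t^2}{2}A-\tfrac{t^{p+1}}{p+1}B$ where $A=\|u_0\|_{H_0^\mu}^2-\g\int_\O|(-\D)^{-\beta/2}u_0|^2dx$ and $B=\int_\O|u_0|^{p+1}dx>0$; since $p+1>2$, this tends to $-\infty$ as $t\to+\infty$, so some $e=t_0u_0$ with $\|e\|>\rho$ has $\mc{F}_\g(e)<0$. For the systems one takes a pair $(u_0,v_0)$ with $u_0=v_0=\varphi_1$ (so the cross term $\int_\O u_0v_0\,dx>0$ works in our favour along the diagonal) and argues the same way. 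I do not foresee a genuine obstacle here — the only point requiring a little care is making sure the coercivity constant $1-\g/\l_1^*$ (and its analogues for the systems) is strictly positive, which is precisely where the standing hypothesis $0<\g<\l_1^*$ enters and is why this lemma is stated under that restriction.
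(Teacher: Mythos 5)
Your proposal is correct and follows essentially the same route as the paper: both use the variational characterization \eqref{bieigen} of $\l_1^*$ to make the quadratic part coercive (so $\g<\l_1^*$ is exactly the needed hypothesis), the Sobolev inequality to control the $(p+1)$-term, and the superlinearity $p+1>2$ to send the functional to $-\infty$ along a ray. The only difference is cosmetic — you organize step (i) as a uniform positive bound on a sphere of radius $\rho$, whereas the paper normalizes $\|g\|_{p+1}=1$ and shows positivity of $\mc{F}_\g(tg)$ for small $t$ — and you spell out the Young-inequality absorption of the cross term for the systems, which the paper leaves as "follows in a similar way."
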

\begin{proof}
For short, we prove the result for $\mc{F}_\g$, for the remaining functionals the result follows in a similar way. Without loss of generality,
we consider a function $g\in H_0^{\mu}(\O)$ such that $\|g\|_{p+1}=1$. Because of \eqref{bieigen}, the fractional Sobolev inequality \eqref{sobolev} and
\eqref{eqnorma}, we find that for $t>0$,
\begin{align*}
\mc{F}_\g(tg)&=\frac{t^2}{2}\int_{\Omega}|(-\Delta)^{\frac{\mu}{2}}g|^2dx-\frac{\gamma t^2}{2}\int_{\Omega}|(-\Delta)^{-\frac{\beta}{2}}g|^2dx-\frac{t^{p+1}}{p+1}\\
&\geq\frac{t^2}{2}\int_{\Omega}|(-\Delta)^{\frac{\mu}{2}}g|^2dx-\frac{\gamma t^2}{2\lambda_1^{*}}\int_{\Omega}|(-\Delta)^{\frac{\mu}{2}}g|^2dx-\frac{t^{p+1}}{p+1}\\
&\geq\frac{t^2}{2}\left(1-\frac{\g}{\l_1^*}\right)\int_{\Omega}|(-\Delta)^{\frac{\mu}{2}}g|^2dx-\frac{t^{p+1}}{C(p+1)}\int_{\Omega}|(-\Delta)^{\frac{\mu}{2}}g|^2dx\\
&=\|g\|_{H_0^{\mu}(\O)}^2\left(\frac{1}{2}\left(1-\frac{\gamma}{\lambda_1^*}\right)t^2-\frac{1}{C(p+1)}t^{p+1}\right)>0,
\end{align*}
for $t>0$ sufficiently small and $C>0$ is a constant coming from inequality \eqref{sobolev}, that is,
$$0<t^{p-1}<\frac{C(p+1)}{2}\left(1-\frac{\gamma}{\lambda_1^*}\right).$$
Thus, the functional $\mc{F}_\g$ has a local minimum at $u=0$, i.e., $\mc{F}_\g(tg)>\mc{F}_\g(0)=0$ for any $g\in H_0^{\mu}(\O)$ provided $t>0$ is small enough. Furthermore, it is clear that
\begin{align*}
    \mc{F}_\g(tg)&=\frac{t^2}{2} \int_\O |(-\Delta)^{\frac{\mu}{2}} g|^2dx - \frac{\g t^2}{2} \int_\O |(-\Delta)^{-\frac{\beta}{2}} g|^2dx-\frac{t^{p+1}}{p+1}\\
        &\leq \frac{t^2}{2}\|g\|_{H_0^{\mu}(\O)}^2-\frac{t^{p+1}}{p+1}.
\end{align*}
Then, $\mc{F}_\g(tg) \rightarrow -\infty$ as $t\to \infty$ and, thus, there exists $\hat u \in H_0^{\mu}(\O)$ such that $\mc{F}_\g(\hat u)<0$. Hence, the functional $\mc{F}_\g$ has the mountain pass geometry.\newline
\end{proof}
Similarly we have the MP geometry for the extended functionals.
\begin{lemma}\label{lezeroextension}
The functionals $\Phi_{\g}^{\beta}$ and $\Phi_{\g}^{\alpha,\beta}$ have the MP geometry.
\end{lemma}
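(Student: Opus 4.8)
The plan is to carry out, for the coupled functionals, the same two-step mountain pass argument used for $\mc{F}_\gamma$ in Lemma~\ref{lezero}. Because the extension operators are isometries (see \eqref{isometry}), one has $\Phi_\gamma^\beta(E_\beta[u],E_\beta[v])=\mc{J}_\gamma^\beta(u,v)$ and $\Phi_\gamma^{\alpha,\beta}(E_\mu[u],E_\beta[v])=\mc{J}_\gamma^{\alpha,\beta}(u,v)$, while for any $w$ in the relevant extension space $\|w\|_{\mathcal{X}_0^\mu(\mathcal{C}_\Omega)}^2\ge\|w(\cdot,0)\|_{H_0^\mu(\Omega)}^2$ (the harmonic extension minimizes the weighted Dirichlet energy among functions with the same trace). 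Hence a mountain pass geometry for $\mc{J}_\gamma^\beta$ and $\mc{J}_\gamma^{\alpha,\beta}$ transfers at once to $\Phi_\gamma^\beta$ and $\Phi_\gamma^{\alpha,\beta}$, and one may equally well repeat the estimates directly on the extended functionals using the trace inequality \eqref{sobext}. So it remains to fill in the verification for $\mc{J}_\gamma^\beta$ and $\mc{J}_\gamma^{\alpha,\beta}$ that was only sketched in Lemma~\ref{lezero}; the sole new feature with respect to $\mc{F}_\gamma$ is the coupling term $-\sqrt{\gamma}\int_\Omega uv$ (resp. $-\int_\Omega uv$), which is sign-indefinite and mixes the two components, so \eqref{bieigen} is no longer directly applicable.

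Near $(0,0)$, where all these functionals vanish, I would expand the arguments in the Dirichlet eigenbasis of $-\Delta$: with $u=\sum_j a_j\varphi_j$, $v=\sum_j b_j\varphi_j$ one has $\|u\|_{H_0^s(\Omega)}^2=\sum_j\lambda_j^s a_j^2$ and $\int_\Omega uv\,dx=\sum_j a_j b_j$, so the quadratic part decouples mode by mode. For $\mc{J}_\gamma^\beta$ (here $\alpha=2\beta$, so $\mu=\beta$), $\lambda_j^\beta a_j^2+\lambda_j^\beta b_j^2-2\sqrt{\gamma}\,a_j b_j\ge(\lambda_j^\beta-\sqrt{\gamma})(a_j^2+b_j^2)\ge(1-\sqrt{\gamma}\,\lambda_1^{-\beta})\,\lambda_j^\beta(a_j^2+b_j^2)$, so the quadratic part dominates $\frac{1}{2}(1-\sqrt{\gamma}/\lambda_1^\beta)(\|u\|_{H_0^\beta}^2+\|v\|_{H_0^\beta}^2)$, coercive precisely because $\gamma<\lambda_1^{2\beta}=\lambda_1^\alpha=\lambda_1^*$. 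For $\mc{J}_\gamma^{\alpha,\beta}$, AM--GM gives $\frac{\lambda_j^\mu}{2\gamma^{1-\beta/\alpha}}a_j^2+\frac{\lambda_j^\beta}{2\gamma^{\beta/\alpha}}b_j^2\ge\frac{\lambda_j^{\alpha/2}}{\sqrt{\gamma}}\,|a_j b_j|$ (using $\mu+\beta=\alpha$ and $\gamma^{1-\beta/\alpha}\gamma^{\beta/\alpha}=\gamma$); retaining a fraction $\theta\in(0,1)$ of the quadratic terms with $(1-\theta)^2\lambda_1^\alpha>\gamma$, which is possible since $\gamma<\lambda_1^\alpha=\lambda_1^*$, leaves the remainder nonnegative term by term, so the quadratic part is again coercive. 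Estimating the superlinear term via \eqref{sobolev} as $\frac{1}{p+1}\int_\Omega|u|^{p+1}dx\le C(\|u\|^2+\|v\|^2)^{\frac{p+1}{2}}$ with $p+1>2$, one obtains $\mc{J}(u,v)\ge\frac{c}{2}(\|u\|^2+\|v\|^2)-C(\|u\|^2+\|v\|^2)^{\frac{p+1}{2}}$, which exceeds a positive constant on a small sphere about the origin.

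For the descending direction I would take $v\equiv0$ and $u=tg$ with $g\ge0$, $\|g\|_{L^{p+1}(\Omega)}=1$: the coupling term drops out, $\mc{J}_\gamma^\beta(tg,0)=\frac{t^2}{2}\|g\|_{H_0^\beta}^2-\frac{t^{p+1}}{p+1}$ and $\mc{J}_\gamma^{\alpha,\beta}(tg,0)=\frac{t^2}{2\gamma^{1-\beta/\alpha}}\|g\|_{H_0^\mu}^2-\frac{t^{p+1}}{(p+1)\gamma^{1-\beta/\alpha}}$ both tend to $-\infty$ as $t\to\infty$, giving a pair outside the sphere with negative energy; the extensions of such pairs do the same for $\Phi_\gamma^\beta$ and $\Phi_\gamma^{\alpha,\beta}$. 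Together with the previous step this is the mountain pass geometry. The only substantive obstacle is the coercivity of the quadratic part in this coupled setting: the sign-indefinite cross term rules out the one-line argument available for $\mc{F}_\gamma$, and it is exactly the mode-by-mode diagonalization above that exhibits the threshold, reconciled with $\lambda_1^{2\beta}$ (resp. $\lambda_1^\alpha$) through the identity $\lambda_1^*=\lambda_1^{\mu+\beta}=\lambda_1^\alpha$ (and $\alpha=2\beta$ in the first case).
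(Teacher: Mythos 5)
Your proof is correct. The paper is very terse at this point: it simply asserts that the argument is ``similar to the proof of Lemma~\ref{lezero}'' (itself displayed only for $\mc{F}_\gamma$) and records the variational characterization \eqref{min_eig} of $\lambda_1^\mu$ via the isometry and the trace inequality, leaving the reader to adapt this to the coupled functionals. You correctly identified the one genuinely new feature --- the sign-indefinite cross term $\int_\Omega uv$, which the one-line use of \eqref{bieigen} in Lemma~\ref{lezero} does not cover --- and supplied the missing coercivity estimate for the coupled quadratic form. Your mode-by-mode diagonalization in the Dirichlet eigenbasis is a valid and transparent way to exhibit the threshold; an equivalent, slightly shorter route more in the spirit of what the paper's reference to \eqref{min_eig} suggests is to bound $|\int_\Omega uv|\le \|u\|_{L^2}\|v\|_{L^2}$, invoke Poincar\'e in the forms $\|u\|_{L^2}^2\le\lambda_1^{-\mu}\|u\|_{H_0^\mu}^2$, $\|v\|_{L^2}^2\le\lambda_1^{-\beta}\|v\|_{H_0^\beta}^2$, and then absorb the product via Young's inequality, which yields the same condition $\gamma<\lambda_1^{\mu+\beta}=\lambda_1^\alpha=\lambda_1^*$. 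Both routes use exactly the same ingredients; yours is just more explicit. The remaining steps --- estimating the superlinear term through \eqref{sobolev}/\eqref{sobext}, taking the descending ray along $v\equiv0$, and transferring from $\mc{J}_\gamma^\beta$, $\mc{J}_\gamma^{\alpha,\beta}$ to $\Phi_\gamma^\beta$, $\Phi_\gamma^{\alpha,\beta}$ via the isometry and the fact that the extension minimizes the weighted Dirichlet energy among functions with fixed trace --- all match what the paper intends.
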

\begin{proof}
The proof is similar to the proof of Lemma \ref{lezero}, we only need to note that, thanks to the isometry \eqref{isometry} and the trace inequality
\eqref{sobext}, the extension function minimizes the norm $\|\cdot\|_{\mathcal{X}_0^{\mu}(\mathcal{C}_{\O})}$ among all the functions with the same trace on $\{y=0\}$, i.e.,
$$\|E_{\mu}[\varphi(\cdot,0)]\|_{\mathcal{X}_0^{\mu}(\mathcal{C}_{\O})}\leq\|\varphi\|_{\mathcal{X}_0^{\mu}(\mathcal{C}_{\O})}\quad \hbox{for all}\quad \varphi\in\mathcal{X}_0^{\mu}(\mathcal{C}_{\O}).$$
Therefore,
\begin{equation}\label{min_eig}
\lambda_1^{\mu}=\inf_{\substack{u\in H_0^{\mu}(\O)\\ u\not\equiv0}}
\frac{\|u\|_{H_0^{\mu}(\O)}^2}{\|u\|_{L^{2}(\O)}^2}
=\inf_{\substack{w\in \mathcal{X}_0^{\mu}(\mathcal{C}_{\O})\\
w\not\equiv0}}\frac{\|w\|_{\mathcal{X}_0^{\mu}(\mathcal{C}_{\O})}^2}{\|w(\cdot,0)\|_{L^{2}(\O)}^2}.
\end{equation}
Thus, following  the arguments in the proof of Lemma \ref{lezero}, the result follows.
\end{proof}
\begin{definition}\label{def_PS}
Let $V$ be a Banach space. We say that $\{u_n\} \subset V$ is a Palais-Smale (PS) sequence for a functional $\mf{F}$ if
\begin{equation}\label{convergencia}
\mf{F}(u_n)\quad\hbox{is bounded and}\quad  \mf{F}'(u_n) \to 0\quad\mbox{in}\ V'\quad \hbox{as}\quad n\to \infty,
\end{equation}
where $V'$ is the dual space of $V$. Moreover, we say that $\{u_n\}$ satisfies a PS condition if
\begin{equation}\label{conPS}
\{u_n\}\quad \mbox{has a strongly convergent subsequence.}
\end{equation}
\end{definition}
In particular, we say that the functional $\mf{F}$ satisfies the PS condition at level $c$ if every PS sequence at level $c$ for $\mf{F}$
satisfies the PS condition.
In the subcritical range, $1\le p<2_\mu^*-1$, the PS condition is satisfied at any level c due to the compact embedding \eqref{compact_emb}.
However, at the critical exponent $2_{\mu}^*$ the compactness in the Sobolev embedding is lost and, as we will see, the PS condition will be satisfied
only for levels below certain critical level $c^*$.
\begin{lemma}\label{acotacion_ecuacion}
Let $\{u_n\}\subset H_0^\mu(\O)$ be a PS sequence at level $c$ for the functional $\mc{F}_\g$, i.e.
$$\mc{F}_\g(u_n) \rightarrow c,\quad    \mc{F}_\g'(u_n) \rightarrow 0,\quad \hbox{as}\quad n\to \infty.$$
Then, $\{u_n\}$ is bounded in $H_0^{\mu}(\O)$.
\end{lemma}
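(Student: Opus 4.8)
The plan is to test the Palais--Smale information against the sequence itself so as to kill the superquadratic term. Abbreviating $A_n:=\int_\O|(-\D)^{\frac{\mu}{2}}u_n|^2dx$ and $B_n:=\int_\O|(-\D)^{-\frac{\beta}{2}}u_n|^2dx$, one has the algebraic identity
\[
\mc{F}_\g(u_n)-\frac{1}{p+1}\langle\mc{F}_\g'(u_n),u_n\rangle=\Big(\frac12-\frac{1}{p+1}\Big)\big(A_n-\g B_n\big),
\]
where the $L^{p+1}$ contribution has cancelled and the coefficient $\frac12-\frac1{p+1}$ is strictly positive since $p>1$. To control the right-hand side from below I would use the hypothesis $\g<\l_1^*$: by the variational characterization \eqref{bieigen}, $B\le(\l_1^*)^{-1}A$ for every $u\in H_0^\mu(\O)$, so, recalling \eqref{eqnorma},
\[
A_n-\g B_n\ \ge\ \Big(1-\frac{\g}{\l_1^*}\Big)A_n\ =\ \Big(1-\frac{\g}{\l_1^*}\Big)\|u_n\|_{H_0^\mu(\O)}^2,
\]
with $1-\g/\l_1^*>0$.

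It then remains to insert the definition of a PS sequence. The sequence $\{\mc{F}_\g(u_n)\}$ is bounded, say by $M$, and $\mc{F}_\g'(u_n)\to0$ in $\big(H_0^\mu(\O)\big)'$ gives $|\langle\mc{F}_\g'(u_n),u_n\rangle|\le\|\mc{F}_\g'(u_n)\|_{(H_0^\mu(\O))'}\|u_n\|_{H_0^\mu(\O)}=o(1)\|u_n\|_{H_0^\mu(\O)}$. Combining the two displays above,
\[
\Big(\frac12-\frac{1}{p+1}\Big)\Big(1-\frac{\g}{\l_1^*}\Big)\|u_n\|_{H_0^\mu(\O)}^2\ \le\ M+o(1)\|u_n\|_{H_0^\mu(\O)},
\]
and since both constants on the left are strictly positive this quadratic inequality forces $\sup_n\|u_n\|_{H_0^\mu(\O)}<\infty$.

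I do not expect a real obstacle: the argument is a bookkeeping exercise and the two hypotheses enter precisely where needed --- $p>1$ to make $\frac12-\frac1{p+1}>0$, and $\g<\l_1^*$ (via \eqref{bieigen}) to make the quadratic part of $\mc{F}_\g$ coercive on $H_0^\mu(\O)$. The same computation applies verbatim to $\mc{J}_\g^{\beta}$ and $\mc{J}_\g^{\alpha,\beta}$, the bilinear term $\int_\O uv\,dx$ being absorbed by Young's inequality together with \eqref{min_eig}; by the extension isometry this also yields the boundedness of PS sequences for $\Phi_\g^{\beta}$ and $\Phi_\g^{\alpha,\beta}$.
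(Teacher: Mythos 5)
Your proof is correct and follows essentially the same route as the paper's: test the PS information against $u_n$ with the combination $\mc{F}_\g(u_n)-\frac{1}{p+1}\langle\mc{F}_\g'(u_n),u_n\rangle$ to cancel the superquadratic term, then invoke \eqref{bieigen} and $\g<\l_1^*$ to obtain coercivity of the quadratic remainder, and close with the resulting quadratic inequality in $\|u_n\|_{H_0^\mu(\O)}$. (Incidentally, the paper's middle display drops the factor $\g$ in front of the $(-\D)^{-\beta/2}$ term, but its conclusion, like yours, carries it correctly.)
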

\begin{proof}
Since $\mc{F}_\g'(u_n) \rightarrow 0$ in $\left(H_0^{\mu}(\O)\right)'$ and
 $\mc{F}_\g(u_n) \to c$, we find that
$$\mc{F}_\g(u_n)-\frac{1}{p+1} \langle \mc{F}_\g'(u_n)|u_n\rangle=c+o(1)\cdot\|u_n\|_{H_0^{\mu}(\O)}.$$
That is,
\begin{align*}
\left(\frac{1}{2}-\frac{1}{p+1}\right)\!\int_\O |(-\Delta)^{\frac{\mu}{2}} u_n|^2dx-\left(\frac{1}{2}-\frac{1}{p+1}\right)\!\int_\O
|(-\Delta)^{-\frac{\beta}{2}}u_n|^2dx  =c+o(1)\cdot\|u_n\|_{H_0^{\mu}(\O)}.
\end{align*}
Therefore, by \eqref{bieigen},
since $\gamma<\lambda_1^*$, using \eqref{eqnorma} we conclude that
$$0<\left(\frac{1}{2}-\frac{1}{p+1}\right)\left(1-\frac{\gamma}{\lambda_1^*}\right)\|u_n\|_{H_0^{\mu}(\O)}^2\leq c+o(1)\cdot\|u_n\|_{H_0^{\mu}(\O)}.$$
Thus, the sequence $\{u_n\}$ is bounded in $H_0^{\mu}(\O)$.
\end{proof}
Following similar ideas as in the above proof, we obtain the following two results.
\begin{lemma}\label{acotacion_sistemabb}
Let $\{(u_n,v_n)\}$ be a PS sequence at level $c$ for the functional $\mc{J}_{\g}^{\beta}$, i.e.
$$\mc{J}_{\g}^{\beta}(u_n,v_n) \rightarrow c,\quad    \left(\mc{J}_{\g}^{\beta}\right)'(u_n,v_n) \rightarrow 0,\quad \hbox{as}\quad n\to \infty.$$
Then, $\{(u_n,v_n)\}$ is bounded in $H_0^{\beta}(\O)\times H_0^{\beta}(\O)$.
\end{lemma}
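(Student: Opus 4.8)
The plan is to reproduce the argument of Lemma \ref{acotacion_ecuacion}, testing the differential along the ``direction'' $(u_n,v_n)$ itself. First I would record, for any $(u,v)\in H_0^{\beta}(\O)\times H_0^{\beta}(\O)$,
\[
\langle (\mc{J}_{\g}^{\beta})'(u,v)\,|\,(u,v)\rangle=\|(-\D)^{\frac{\beta}{2}}u\|_{L^2(\O)}^2+\|(-\D)^{\frac{\beta}{2}}v\|_{L^2(\O)}^2-2\sqrt{\gamma}\int_\O uv\,dx-\int_\O|u|^{p+1}dx,
\]
and then form the combination $\mc{J}_{\g}^{\beta}(u,v)-\frac{1}{p+1}\langle (\mc{J}_{\g}^{\beta})'(u,v)\,|\,(u,v)\rangle$. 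The $|u|^{p+1}$ contributions cancel and a short computation leaves
\[
\mc{J}_{\g}^{\beta}(u,v)-\tfrac{1}{p+1}\langle (\mc{J}_{\g}^{\beta})'(u,v)\,|\,(u,v)\rangle=\frac{p-1}{2(p+1)}\Bigl(\|(-\D)^{\frac{\beta}{2}}u\|_{L^2(\O)}^2+\|(-\D)^{\frac{\beta}{2}}v\|_{L^2(\O)}^2\Bigr)-\frac{p-1}{p+1}\,\sqrt{\gamma}\int_\O uv\,dx.
\]

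The next step is to absorb the indefinite cross term into the quadratic part. Since the system \eqref{sistemabb} corresponds to the case $\alpha=2\beta$, here $\mu=\beta$ and, as shown in Section \ref{Sec:ProofTh0}, $\lambda_1^*=\lambda_1^{\mu+\beta}=\lambda_1^{2\beta}$, hence $\sqrt{\gamma}<\sqrt{\lambda_1^*}=\lambda_1^{\beta}$. When $\int_\O uv\,dx\le 0$ the cross term only helps; when $\int_\O uv\,dx>0$, Cauchy--Schwarz and Young's inequality give $\int_\O uv\,dx\le\frac12\bigl(\|u\|_{L^2(\O)}^2+\|v\|_{L^2(\O)}^2\bigr)$, and the variational characterization \eqref{min_eig} applied with $\beta$ in place of $\mu$ yields $\|u\|_{L^2(\O)}^2\le(\lambda_1^{\beta})^{-1}\|(-\D)^{\frac{\beta}{2}}u\|_{L^2(\O)}^2$, and likewise for $v$. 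In either case one obtains, writing $\|(u,v)\|^2:=\|(-\D)^{\frac{\beta}{2}}u\|_{L^2(\O)}^2+\|(-\D)^{\frac{\beta}{2}}v\|_{L^2(\O)}^2$ for the norm in $H_0^{\beta}(\O)\times H_0^{\beta}(\O)$,
\[
\mc{J}_{\g}^{\beta}(u,v)-\tfrac{1}{p+1}\langle (\mc{J}_{\g}^{\beta})'(u,v)\,|\,(u,v)\rangle\ \ge\ \frac{p-1}{2(p+1)}\Bigl(1-\frac{\sqrt{\gamma}}{\lambda_1^{\beta}}\Bigr)\,\|(u,v)\|^2,
\]
and the prefactor is strictly positive precisely because $\gamma<\lambda_1^*=(\lambda_1^{\beta})^2$.

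Finally I would feed in the PS hypotheses. From $\mc{J}_{\g}^{\beta}(u_n,v_n)\to c$ and $(\mc{J}_{\g}^{\beta})'(u_n,v_n)\to 0$ in the dual space we get $\mc{J}_{\g}^{\beta}(u_n,v_n)=c+o(1)$ and $|\langle (\mc{J}_{\g}^{\beta})'(u_n,v_n)\,|\,(u_n,v_n)\rangle|\le\|(\mc{J}_{\g}^{\beta})'(u_n,v_n)\|_{*}\|(u_n,v_n)\|=o(1)\,\|(u_n,v_n)\|$, so plugging $(u_n,v_n)$ into the inequality above gives
\[
\frac{p-1}{2(p+1)}\Bigl(1-\frac{\sqrt{\gamma}}{\lambda_1^{\beta}}\Bigr)\|(u_n,v_n)\|^2\ \le\ c+o(1)+o(1)\,\|(u_n,v_n)\|,
\]
and boundedness of $\{(u_n,v_n)\}$ in $H_0^{\beta}(\O)\times H_0^{\beta}(\O)$ follows at once. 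The only step requiring genuine care — and the one where the restriction $\gamma<\lambda_1^*$ is used — is the coercive control of the indefinite bilinear term $\sqrt{\gamma}\int_\O uv$; the rest is the same bookkeeping as in Lemma \ref{acotacion_ecuacion}.
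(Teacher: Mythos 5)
Your proposal is correct and carries out exactly the computation the paper indicates (it only says ``following similar ideas as in the above proof'') when adapted from $\mc{F}_\g$ to $\mc{J}_{\g}^{\beta}$: form $\mc{J}_{\g}^{\beta}(u_n,v_n)-\frac{1}{p+1}\langle (\mc{J}_{\g}^{\beta})'(u_n,v_n)\,|\,(u_n,v_n)\rangle$ to cancel the nonlinearity, then control the remaining quadratic form. The key step that the paper leaves implicit — using $\sqrt{\gamma}<\lambda_1^{\beta}=\sqrt{\lambda_1^*}$ (since $\alpha=2\beta$ for this system) together with Young's inequality and the eigenvalue bound to absorb the cross term $\sqrt{\gamma}\int_\O uv\,dx$ — is handled correctly.
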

\begin{lemma}\label{rem}
Let $\{(w_n,z_n)\}$ be a PS sequence at level $c$ for the functional $\Phi_{\g}^{\beta}\ ($resp. for the functional $\Phi_{\g}^{\alpha,\beta})$. Then, $\{(w_n,z_n)\}$ is bounded in $\mathcal{X}_0^{\beta}(\mathcal{C}_{\Omega})\times\mathcal{X}_0^{\beta}(\mathcal{C}_{\Omega})\ ($resp. in $\mathcal{X}_0^{\mu}(\mathcal{C}_{\Omega})\times\mathcal{X}_0^{\beta}(\mathcal{C}_{\Omega}))$
\end{lemma}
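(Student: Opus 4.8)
The plan is to mimic the boundedness argument for $\mc{F}_\g$ (Lemma \ref{acotacion_ecuacion}) and for $\mc{J}_\g^\beta$ (Lemma \ref{acotacion_sistemabb}), carrying it out directly at the level of the extended functionals $\Phi_\g^\beta$ and $\Phi_\g^{\alpha,\beta}$. Since the extension operators are isometries, the quadratic gradient terms in $\Phi_\g^\beta$ reproduce the $H_0^\beta(\O)$-norms of the traces (and likewise for $\Phi_\g^{\alpha,\beta}$ with the appropriate weights), so the computation is essentially the same one already performed, only phrased in $\mathcal{X}_0^\beta(\mathcal{C}_\O)\times\mathcal{X}_0^\beta(\mathcal{C}_\O)$ (resp.\ $\mathcal{X}_0^\mu(\mathcal{C}_\O)\times\mathcal{X}_0^\beta(\mathcal{C}_\O)$).

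\medskip

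The steps, in order: First, for a PS sequence $\{(w_n,z_n)\}$ at level $c$ for $\Phi_\g^\beta$, form the combination
$$\Phi_\g^\beta(w_n,z_n)-\frac{1}{p+1}\langle (\Phi_\g^\beta)'(w_n,z_n)\,|\,(w_n,z_n)\rangle = c+o(1)\cdot\|(w_n,z_n)\|,$$
exactly as in Lemma \ref{acotacion_ecuacion}. The cubic coupling term $\sqrt{\gamma}\int_\O w(x,0)z(x,0)dx$ and the $L^{p+1}$ term are both homogeneous, so after cancellation one is left with $\left(\tfrac12-\tfrac1{p+1}\right)$ times the sum of the two gradient-squared integrals, minus $\left(1-\tfrac{2}{p+1}\right)\sqrt\gamma\int_\O w_n(x,0)z_n(x,0)dx$. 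Second, control this coupling term: by Young's inequality $\sqrt\gamma\,|\int_\O w_n z_n| \le \tfrac{\sqrt\gamma}{2}(\|w_n(\cdot,0)\|_{L^2}^2+\|z_n(\cdot,0)\|_{L^2}^2)$, and then use \eqref{min_eig} in the form $\|w_n(\cdot,0)\|_{L^2}^2 \le \lambda_1^{-\beta}\|w_n\|_{\mathcal{X}_0^\beta(\mathcal{C}_\O)}^2$ (and similarly for $z_n$) to absorb it into the gradient terms; the residual coefficient is $1-\gamma\lambda_1^{-\beta}\cdot(\text{something})$, which is strictly positive precisely because $\gamma<\lambda_1^*=\lambda_1^{\alpha}$ and $\alpha=2\beta$ in the case of \eqref{sistemabb}, so $\gamma/\lambda_1^\beta<\lambda_1^\beta$ — this is exactly the constant appearing in the analogue for $\mc{J}_\g^\beta$. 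Third, this yields $C\,\|(w_n,z_n)\|_{\mathcal{X}_0^\beta\times\mathcal{X}_0^\beta}^2 \le c+o(1)\cdot\|(w_n,z_n)\|$ with $C>0$, whence boundedness. Fourth, repeat verbatim for $\Phi_\g^{\alpha,\beta}$, where the coupling is $\int_\O w_n(x,0)z_n(x,0)dx$ without the $\sqrt\gamma$ but the two gradient terms carry the weights $\kappa_\mu\gamma^{-(1-\beta/\alpha)}/2$ and $\kappa_\beta\gamma^{-\beta/\alpha}/2$; Young's inequality together with the eigenvalue bound $\|z_n(\cdot,0)\|_{L^2}^2\le\lambda_1^{-\beta}\|z_n\|_{\mathcal{X}_0^\beta}^2$ and the analogous bound for $w_n$ in $\mathcal{X}_0^\mu$ again leaves a strictly positive residual thanks to $\gamma<\lambda_1^\alpha$, after matching the powers of $\gamma$.

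\medskip

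The main obstacle — really the only point needing care — is the algebra in the second and fourth steps: one must choose the split in Young's inequality (possibly with a free parameter, not just $1/2$) so that after invoking \eqref{min_eig} the coefficients of \emph{both} squared-gradient norms remain strictly positive, and one must verify that the resulting smallness condition on $\gamma$ is implied by $\gamma<\lambda_1^*$ and not something stronger. For $\Phi_\g^\beta$ this works with the symmetric split because $\alpha=2\beta$; for $\Phi_\g^{\alpha,\beta}$ one tracks the powers $\gamma^{1-\beta/\alpha}$ and $\gamma^{\beta/\alpha}$ and checks that the product structure reconstitutes $\gamma/\lambda_1^\alpha<1$. Since Lemmas \ref{acotacion_sistemabb} and \ref{rem} are asserted to follow ``following similar ideas,'' it suffices to indicate this reduction; the detailed inequality chasing is routine once the correct split is fixed. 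Note that no use of the subcritical restriction on $p$ is needed here — only $p>1$ so that $\tfrac12-\tfrac1{p+1}>0$ — so the lemma is valid up to and including the critical exponent, which is why it is stated before the discussion of the critical case.

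\begin{pf}
We treat $\Phi_\g^\beta$ in detail; the argument for $\Phi_\g^{\alpha,\beta}$ is entirely analogous, keeping track of the weights $\gamma^{1-\beta/\alpha}$, $\gamma^{\beta/\alpha}$. Let $\{(w_n,z_n)\}\subset\mathcal{X}_0^{\beta}(\mathcal{C}_{\Omega})\times\mathcal{X}_0^{\beta}(\mathcal{C}_{\Omega})$ be a PS sequence at level $c$ for $\Phi_\g^\beta$. Writing $\|\cdot\|_\beta=\|\cdot\|_{\mathcal{X}_0^{\beta}(\mathcal{C}_{\Omega})}$ for brevity, we have
\begin{align*}
\Phi_\g^\beta(w_n,z_n)-\frac{1}{p+1}\langle(\Phi_\g^\beta)'(w_n,z_n)\,|\,(w_n,z_n)\rangle = c+o(1)\cdot\big(\|w_n\|_\beta+\|z_n\|_\beta\big).
\end{align*}
Since the coupling term $\sqrt{\gamma}\int_\Omega w_n(x,0)z_n(x,0)dx$ is quadratic and the term $\int_\Omega|w_n(x,0)|^{p+1}dx$ is $(p+1)$-homogeneous, the left-hand side equals
\begin{align*}
\Big(\frac12-\frac1{p+1}\Big)\big(\|w_n\|_\beta^2+\|z_n\|_\beta^2\big)-\Big(1-\frac{2}{p+1}\Big)\sqrt{\gamma}\int_\Omega w_n(x,0)z_n(x,0)dx.
\end{align*}
By Young's inequality and the eigenvalue bound \eqref{min_eig}, which gives $\|w_n(\cdot,0)\|_{L^2(\Omega)}^2\le\lambda_1^{-\beta}\|w_n\|_\beta^2$ and likewise for $z_n$,
\begin{align*}
\sqrt{\gamma}\,\Big|\int_\Omega w_n(x,0)z_n(x,0)dx\Big|\le\frac{\sqrt\gamma}{2}\big(\|w_n(\cdot,0)\|_{L^2(\Omega)}^2+\|z_n(\cdot,0)\|_{L^2(\Omega)}^2\big)\le\frac{\sqrt\gamma}{2\lambda_1^{\beta}}\big(\|w_n\|_\beta^2+\|z_n\|_\beta^2\big).
\end{align*}
Recalling that in this case $\alpha=2\beta$, so that $\lambda_1^*=\lambda_1^{\alpha}=\lambda_1^{2\beta}$ and $\gamma<\lambda_1^*$ reads $\sqrt\gamma/\lambda_1^{\beta}<1$, and noting $0<1-\tfrac{2}{p+1}<1$ for $p>1$, we obtain
\begin{align*}
0<\Big(\frac12-\frac1{p+1}\Big)\Big(1-\frac{\sqrt\gamma}{\lambda_1^{\beta}}\Big)\big(\|w_n\|_\beta^2+\|z_n\|_\beta^2\big)\le c+o(1)\cdot\big(\|w_n\|_\beta+\|z_n\|_\beta\big).
\end{align*}
Hence $\{(w_n,z_n)\}$ is bounded in $\mathcal{X}_0^{\beta}(\mathcal{C}_{\Omega})\times\mathcal{X}_0^{\beta}(\mathcal{C}_{\Omega})$.

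For $\Phi_\g^{\alpha,\beta}$, proceeding in the same way the combination $\Phi_\g^{\alpha,\beta}-\tfrac1{p+1}\langle(\Phi_\g^{\alpha,\beta})'\,|\,\cdot\,\rangle$ reduces, after cancelling the $(p+1)$-homogeneous term, to
\begin{align*}
\Big(\frac12-\frac1{p+1}\Big)\Big(\frac{\kappa_{\mu}}{\gamma^{1-\beta/\alpha}}\|w_n\|_{\mathcal{X}_0^{\mu}}^2+\frac{\kappa_{\beta}}{\gamma^{\beta/\alpha}}\|z_n\|_{\mathcal{X}_0^{\beta}}^2\Big)-\Big(1-\frac{2}{p+1}\Big)\int_\Omega w_n(x,0)z_n(x,0)dx.
\end{align*}
Using Young's inequality with a parameter $\delta>0$, together with \eqref{min_eig} in both spaces, one bounds $\big|\int_\Omega w_n z_n\big|\le\tfrac{\delta}{2}\lambda_1^{-\mu}\|w_n\|_{\mathcal{X}_0^{\mu}}^2+\tfrac1{2\delta}\lambda_1^{-\beta}\|z_n\|_{\mathcal{X}_0^{\beta}}^2$; choosing $\delta$ so that both resulting coefficients stay positive is possible exactly when $\gamma^{1-\beta/\alpha}\gamma^{\beta/\alpha}=\gamma<\lambda_1^{\mu+\beta}=\lambda_1^{\alpha}=\lambda_1^*$, which holds by hypothesis. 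Absorbing the coupling term then yields
\begin{align*}
0<C\big(\|w_n\|_{\mathcal{X}_0^{\mu}}^2+\|z_n\|_{\mathcal{X}_0^{\beta}}^2\big)\le c+o(1)\cdot\big(\|w_n\|_{\mathcal{X}_0^{\mu}}+\|z_n\|_{\mathcal{X}_0^{\beta}}\big),
\end{align*}
for some constant $C>0$, so $\{(w_n,z_n)\}$ is bounded in $\mathcal{X}_0^{\mu}(\mathcal{C}_{\Omega})\times\mathcal{X}_0^{\beta}(\mathcal{C}_{\Omega})$.
\end{pf}
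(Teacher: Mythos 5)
Your proof is correct and carries out precisely the argument the paper leaves implicit (``Following similar ideas as in the above proof\ldots''): form $\Phi-\frac{1}{p+1}\langle\Phi'|\cdot\rangle$ to cancel the $(p+1)$-homogeneous term, absorb the cross term via Young's inequality and the eigenvalue characterization \eqref{min_eig}, and observe that $\gamma<\lambda_1^\alpha=\lambda_1^*$ is exactly what makes the Young parameter $\delta$ selectable in the asymmetric case (the interval $\bigl(\gamma^{\beta/\alpha}/\lambda_1^\beta,\ \lambda_1^\mu/\gamma^{1-\beta/\alpha}\bigr)$ is nonempty). One purely notational slip: in the display for $\Phi_\gamma^{\alpha,\beta}$ the quadratic parts should be $\frac{1}{\gamma^{1-\beta/\alpha}}\|w_n\|^2_{\mathcal X_0^\mu}$ and $\frac{1}{\gamma^{\beta/\alpha}}\|z_n\|^2_{\mathcal X_0^\beta}$, not prefaced by an extra $\kappa_\mu,\kappa_\beta$, since the paper's convention already absorbs those constants into $\|\cdot\|_{\mathcal X_0^\theta}$; this does not affect the argument.
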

Now, we are able to prove one of the main results of this paper.
\begin{proof}[Proof of Theorem \ref{Th0}.]\hfill\break
Since we are dealing with the subcritical case $1<p<2_{\mu}^*-1$, given a PS sequence $\{u_n\}\subset H_0^{\mu}(\O)$ for the functional $\mc{F}_\g$, thanks to
Lemma \ref{acotacion_ecuacion} and the compact inclusion \eqref{compact_emb}, the PS condition is satisfied. Moreover, by Lemma \ref{lezero},
the functional $\mc{F}_\g$ satisfies the MP geometry. Then, due to the MPT \cite{AR} and the PS condition,
the functional $\mc{F}_\g$ possesses a critical point $u\in H_0^{\mu}(\O)$.
Moreover, if we define the set of paths
between the origin and $\hat u$,
$$\G:=\{g\in C([0,1],H_0^{\mu}(\O))\,;\, g(0)=0,\; g(1)=\hat u\},$$
with $\hat u$ given as in Lemma \ref{lezero}, i.e. $\mc{F}_\g(\hat u)<0$, then,
$$\mc{F}_\g(u)=\inf_{g\in\G} \max_{\theta \in [0,1]} \mc{F}_\g(g(\theta))= c.$$
To show that $u>0$, let us consider the functional,
\begin{equation*}
\mc{F}_\g^+(u)=\mc{F}_\g(u^+),
\end{equation*}
where $u^+=\max\{u,0\}$. Repeating with minor changes the arguments carried out above, one readily shows that what was proved for the functional $\mc{F}_\g$
still holds for the functional $\mc{F}_\g^+$. Hence, it follows that $u\geq 0$ and by the Maximum Principle (see \cite{CaSi}), $u>0$.
\end{proof}
\begin{remark}
Once we have proved the existence of a positive solution to problem \eqref{ecuacion}, due to the equivalence between \eqref{ecuacion} and systems
\eqref{sistemabb} and \eqref{sistemaab} we have the existence of a positive solution to both systems too.
\end{remark}
\section{Concentration-Compactness at the critical exponent.}\label{Subsec:concentracion_compacidad}
In this subsection we focus on the critical exponent case, $p=2_{\mu}^*-1$, proving Theorem\;\ref{Th1}.
Our aim is to prove the PS condition for the functional $\mc{F}_{\g}$ since the rest of the proof will be similar to what we performed in the previous
section for the subcritical case.

First, by means of a
concentration-compactness argument, we will prove that the PS
condition is satisfied at levels below certain critical level $c^*$
(to be determined). Next, we construct an appropriate path whose energy is
below that critical level $c^*$ and finally we will find a corresponding sequence satisfying the PS condition. Both steps are strongly based on
the use of particular test functions. Hence, through this subsection
we will focus on working with the extended
functionals $\Phi_{\g}^{\beta}$ and $\Phi_{\g}^{\alpha,\beta}$. Once
we have completed this task, since the $\beta$-harmonic extension is
an isometry, given a PS sequence
$\{(w_n,z_n)\}\subset\mathcal{X}_0^{\beta}(\mathcal{C}_{\O})\times\mathcal{X}_0^{\beta}(\mathcal{C}_{\O})$
at level $c$ for the functional $\Phi_{\g}^{\beta}$, satisfying the
PS condition, it is clear that the trace sequence
$\{(u_n,v_n)\}=\{Tr[w_n],Tr[z_n]\}$ belongs to
$H_0^{\beta}(\O)\times H_0^{\beta}(\O)$ and is a PS sequence at
the same level $c$ below certain $c^*$ for the functional
$\mc{J}_{\g}^{\beta}$, satisfying the PS condition. Thus, the
functional $\mc{J}_{\g}^{\beta}$ satisfies the PS condition at every
level $c$ below the critical level $c^*$.  In a similar way we can
infer that the functional $\mc{J}_{\g}^{\alpha,\beta}$ satisfies the corresponding
PS condition.

More specifically, by means of a concentration-compactness argument we first prove that the
PS condition is satisfied for any level $c$ with
\begin{equation}\label{levelbeta}
c<\left(\frac{1}{2}-\frac{1}{2_{\beta}^*}\right)\left(\kappa_{\beta}S(\beta,N)\right)^{\frac{2_{\beta}^*}{2_{\beta}^*-2}}=\frac{\beta}{N} \left(\kappa_{\beta}S(\beta,N)\right)^{\frac{N}{2\beta}},
\tag{$c_{\beta}^*$}
\end{equation}
when dealing with the functional $\Phi_{\g}^{\beta}$, and for any level
\begin{equation}\label{levelmu}
c<\frac{1}{\gamma^{1-\beta/\alpha}}\left(\frac{1}{2}-\frac{1}{2_{\mu}^*}\right)\left(\kappa_{\mu}S(\mu,N)\right)^{\frac{2_{\mu}^*}{2_{\mu}^*-2}}=\frac{1}{\gamma^{1-\beta/\alpha}}\frac{\mu}{N} \left(\kappa_{\mu}S(\mu,N)\right)^{\frac{N}{2\mu}},
\tag{$c_{\mu}^*$}
\end{equation}
when dealing with the functional $\Phi_{\g}^{\alpha,\beta}$. Next, using an appropriate cut-off version of the extremal functions \eqref{u_eps}
we will obtain a path below the critical levels $c_{\beta}^*$ and $c_{\mu}^*$.\newline
\subsection{PS condition under a critical level}
To accomplish the first step, let us start recalling the following.
\begin{definition}
We say that a sequence $\{y^{1-2\mu}|\nabla w_n|^2\}_{n\in\mathbb{N}}$ is tight if for any $\eta>0$ there exists $\rho_0>0$ such that
\begin{equation*}
\int_{\{y>\rho_0\}}\int_{\Omega}y^{1-2\mu}|\nabla w_n|^2dxdy\leq\eta,\quad\forall n\in\mathbb{N}.
\end{equation*}
\end{definition}
\noindent In particular, since we are dealing with a system, we say that the sequence
$$\{(y^{1-2\mu}|\nabla w_n|^2,y^{1-2\beta}|\nabla z_n|^2)\}_{n\in\mathbb{N}},$$ is tight if for any $\eta>0$ there exists $\rho_0>0$ such that
\begin{equation*}
\int_{\{y>\rho_0\}}\int_{\Omega}y^{1-2\mu}|\nabla w_n|^2dxdy+\int_{\{y>\rho_0\}}\int_{\Omega}y^{1-2\beta}|\nabla z_n|^2dxdy\leq\eta,\quad\forall n\in\mathbb{N}.
\end{equation*}
Now we state the Concentration-Compactness Theorem  \cite[Theorem 5.1]{BCdPS} that will be useful in the proof of the PS condition.
\begin{theorem}\label{th:concentracion}
Let $\{w_n\}$ be a weakly convergent sequence to $w$ in $\mathcal{X}_0^{\mu}(\mathcal{C}_{\Omega})$ such that the sequence $\{y^{1-2\mu}|\nabla w_n|^2\}_{n\in\mathbb{N}}$ is tight. Let $u_n=w_n(x,0)$ and $u=w(x,0)$. Let $\nu,\ \zeta$ be two nonnegative measures such that
\begin{equation*}
y^{1-2\mu}|\nabla w_n|^2\to\zeta\quad\mbox{and}\quad|u_n|^{2_{\mu}^*}\to\nu,\quad\mbox{as}\ n\to\infty
\end{equation*}
in the sense of measures. Then there exists an index set $I$, at most countable, points $\{x_i\}_{i\in I}\subset\Omega$ and positive numbers $\nu_i$, $\zeta_i$, with $i\in I$, such that,
\begin{itemize}
\item $\nu=|u|^{2_{\mu}^*}+\sum\limits_{i\in I}\nu_i\delta_{x_i},\ \nu_i>0,$
\item $\zeta=y^{1-2\mu}|\nabla w|^2+\sum\limits_{i\in I}\zeta_i\delta_{x_i},\ \zeta_i>0,$
\end{itemize}
where $\delta_{x_{j}}$ stands for the Dirac's delta centered at $x_j$ and satisfying the condition
\begin{equation*}
\zeta_i\geq S(\mu,N)\nu_i^{2/2_{\mu}^*}.
\end{equation*}
\end{theorem}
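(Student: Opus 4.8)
The plan is to follow P.-L.\ Lions' first concentration--compactness principle, adapted to the degenerate-weight extension setting, as in \cite[Theorem 5.1]{BCdPS}. The starting point is to pass to $v_n:=w_n-w\rightharpoonup 0$ in $\mathcal{X}_0^{\mu}(\mathcal{C}_{\Omega})$ and to reduce the statement to the analysis of the defect measures carried by $\{v_n\}$. For this one needs a local compactness property: on every bounded set $K\subset\overline{\mathcal{C}_{\Omega}}$ the embedding $\mathcal{X}_0^{\mu}(\mathcal{C}_{\Omega})\hookrightarrow L^2(K,\,y^{1-2\mu}dxdy)$ is compact, and the trace embedding into $L^q_{loc}(\Omega)$ is compact for $q<2_{\mu}^*$; this rests on the fact that $y^{1-2\mu}$ is a Muckenhoupt $A_2$ weight, so the weighted Sobolev--Rellich theory of Fabes--Kenig--Serapioni applies, together with the tightness hypothesis, which prevents mass from escaping to $y\to\infty$. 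Consequently $v_n\to 0$ strongly in $L^2_{loc}(y^{1-2\mu})$ and, passing to a subsequence, $|v_n(\cdot,0)|^{2_{\mu}^*}\rightharpoonup\widetilde{\nu}$ and $y^{1-2\mu}|\nabla v_n|^2\rightharpoonup\widetilde{\zeta}$ in the sense of measures, with $\widetilde{\zeta}(\mathcal{C}_{\Omega})<\infty$.

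The core step is a reverse Hölder inequality between $\widetilde{\nu}$ and $\widetilde{\zeta}$. Fix $\phi\in C_c^{\infty}(\mathbb{R}^{N+1})$; since $v_n$ vanishes on $\partial_L\mathcal{C}_{\Omega}$, the product $\phi v_n$ belongs to $\mathcal{X}_0^{\mu}(\mathcal{C}_{\Omega})$, and the trace inequality \eqref{sobext} with the sharp constant $S(\mu,N)$ gives
\begin{equation*}
S(\mu,N)\left(\int_{\Omega}|\phi(x,0)|^{2_{\mu}^*}|v_n(x,0)|^{2_{\mu}^*}dx\right)^{2/2_{\mu}^*}\le\int_{\mathcal{C}_{\Omega}}y^{1-2\mu}|\nabla(\phi v_n)|^2dxdy.
\end{equation*}
Expanding $\nabla(\phi v_n)=\phi\nabla v_n+v_n\nabla\phi$, the term $\int y^{1-2\mu}v_n^2|\nabla\phi|^2\to 0$ by local strong $L^2(y^{1-2\mu})$ convergence, and the mixed term is bounded by $2(\int y^{1-2\mu}v_n^2|\nabla\phi|^2)^{1/2}(\int y^{1-2\mu}\phi^2|\nabla v_n|^2)^{1/2}\to0$; letting $n\to\infty$ yields
\begin{equation*}
S(\mu,N)\left(\int_{\Omega}|\phi(x,0)|^{2_{\mu}^*}d\widetilde{\nu}\right)^{2/2_{\mu}^*}\le\int_{\mathcal{C}_{\Omega}}|\phi|^2\,d\widetilde{\zeta},\qquad\forall\,\phi\in C_c^{\infty}(\mathbb{R}^{N+1}).
\end{equation*}
Because $\widetilde{\nu}$ is supported on $\Omega\times\{0\}$, which we identify with $\Omega$, and $\widetilde{\zeta}$ is a finite measure, the standard measure-theoretic lemma of Lions (reverse Hölder with exponent $2_{\mu}^*>2$) forces $\widetilde{\nu}$ to be purely atomic, $\widetilde{\nu}=\sum_{i\in I}\nu_i\delta_{x_i}$ with $I$ at most countable, $x_i\in\Omega$, $\nu_i>0$, and moreover $\widetilde{\zeta}\ge S(\mu,N)\sum_{i\in I}\nu_i^{2/2_{\mu}^*}\delta_{x_i}$; countability of $I$ follows since $\widetilde{\zeta}(\mathcal{C}_{\Omega})<\infty$ implies $\sum_i\nu_i^{2/2_{\mu}^*}<\infty$.

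It remains to return from $v_n$ to $w_n$. By the Brezis--Lieb lemma applied to the traces, $|u_n|^{2_{\mu}^*}-|u_n-u|^{2_{\mu}^*}\to|u|^{2_{\mu}^*}$ in $L^1(\Omega)$, hence $\nu=|u|^{2_{\mu}^*}+\widetilde{\nu}=|u|^{2_{\mu}^*}+\sum_{i\in I}\nu_i\delta_{x_i}$. For the energy measure write $y^{1-2\mu}|\nabla w_n|^2=y^{1-2\mu}|\nabla w|^2+y^{1-2\mu}|\nabla v_n|^2+2y^{1-2\mu}\nabla w\cdot\nabla v_n$; the last term tends to $0$ in the sense of measures since $\nabla v_n\rightharpoonup0$ weakly in $L^2(y^{1-2\mu})$ against the fixed function $\nabla w\in L^2(y^{1-2\mu})$, so $\zeta=y^{1-2\mu}|\nabla w|^2+\widetilde{\zeta}$. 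As $y^{1-2\mu}|\nabla w|^2dxdy$ is absolutely continuous and carries no atoms, the atomic part of $\zeta$ is that of $\widetilde{\zeta}$, so setting $\zeta_i:=\zeta(\{x_i\})$ gives $\zeta_i\ge S(\mu,N)\nu_i^{2/2_{\mu}^*}$ and finishes the proof. The main obstacle in this scheme is precisely the local compactness used to discard the lower-order terms in $\nabla(\phi v_n)$ and to kill the cross term in the energy expansion: the degeneracy/singularity of the weight $y^{1-2\mu}$ at $\{y=0\}$ blocks a direct application of the classical Rellich--Kondrachov theorem, and one must invoke the weighted $A_2$-theory, with the tightness hypothesis doing the work of controlling the unbounded $y$-direction.
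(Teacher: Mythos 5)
The paper does not actually prove Theorem~\ref{th:concentracion}; it is quoted directly from \cite[Theorem 5.1]{BCdPS} and used as a black box in the proof of Lemma \ref{PScondition_extensionsistemabb}, so there is no in-paper argument to compare with. That said, your sketch reproduces precisely the argument of the cited reference --- reduction to $v_n=w_n-w\rightharpoonup 0$, the reverse H\"older inequality obtained by testing the sharp trace inequality \eqref{sobext} with $\phi v_n$ and discarding lower-order terms through weighted local compactness (the Muckenhoupt $A_2$ theory of Fabes--Kenig--Serapioni), Lions' measure-theoretic lemma for the atomic decomposition, and recombination via Brezis--Lieb for the traces together with the bilinear weak-convergence identity for the energy density --- so your proof is essentially the intended one, and the structure is sound. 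One technical point deserving care in a full writeup: after the limit $n\to\infty$ the reverse H\"older inequality compares a measure $\widetilde{\nu}$ living on $\Omega\times\{0\}\simeq\Omega$ with a measure $\widetilde{\zeta}$ living on $\mathcal{C}_{\Omega}\subset\mathbb{R}^{N+1}_+$, whereas Lions' lemma requires both measures on a common space; this is repaired either by pushing $\widetilde{\zeta}$ forward under the projection $(x,y)\mapsto x$, or by restricting to product test functions $\phi(x,y)=\psi(x)\xi(y)$ with $0\le\xi\le1$, $\xi(0)=1$, which yields the same inequality between two measures on $\Omega$ and lets the lemma apply. The use of tightness is correctly localized: it is not needed for the local compactness used to kill the terms in $v_n\nabla\phi$ (those integrals are over the compact support of $\nabla\phi$), but it is what guarantees $\widetilde{\zeta}$ is a finite measure on $\mathcal{C}_{\Omega}$ and that no energy escapes as $y\to\infty$, which in turn gives the summability $\sum_i\nu_i^{2/2_\mu^*}<\infty$ and countability of $I$.
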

With respect to the PS condition we have the following.
\begin{lemma}\label{PScondition_extensionsistemabb}
If $p=2_{\beta}^*-1$ the functional $\Phi_{\g}^{\beta}$ satisfies the PS condition for any level $c$ below the critical level defined by \eqref{levelbeta}.
\end{lemma}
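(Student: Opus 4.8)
The plan is to run the usual concentration--compactness scheme on the extended functional. Take a PS sequence $\{(w_n,z_n)\}\subset\mathcal{X}_0^{\beta}(\mathcal{C}_{\Omega})\times\mathcal{X}_0^{\beta}(\mathcal{C}_{\Omega})$ for $\Phi_{\g}^{\beta}$ at level $c<c_{\beta}^*$ (recall that in the case of system \eqref{sistemabb} one has $\mu=\beta$, so $|w|^{p-1}w$ with $p=2_{\beta}^*-1$ is exactly the critical nonlinearity). By Lemma \ref{rem} the sequence is bounded, hence along a subsequence $(w_n,z_n)\rightharpoonup(w,z)$ weakly in the product space; writing $u_n=w_n(\cdot,0)$, $v_n=z_n(\cdot,0)$ and $u=w(\cdot,0)$, $v=z(\cdot,0)$, we get $u_n\rightharpoonup u$, $v_n\rightharpoonup v$ in $H_0^{\beta}(\Omega)$, $u_n\to u$ and $v_n\to v$ strongly in $L^q(\Omega)$ for every $q<2_{\beta}^*$ by \eqref{compact_emb}, and a.e. in $\Omega$.

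The first step is to verify the tightness hypothesis of Theorem \ref{th:concentracion} for $\{y^{1-2\beta}|\nabla w_n|^2\}$ and $\{y^{1-2\beta}|\nabla z_n|^2\}$. This is done by testing $(\Phi_{\g}^{\beta})'(w_n,z_n)$ (which vanishes in the dual) against $(\eta_R w_n,\eta_R z_n)$, where $\eta_R(y)$ is a cut--off equal to $0$ for $y<R$, to $1$ for $y>2R$, with $|\eta_R'|\le C/R$: the traces drop out since $\eta_R$ vanishes on $\{y=0\}$, and the cross term $\kappa_{\beta}\int_{\mathcal{C}_{\Omega}}y^{1-2\beta}(w_n\nabla w_n+z_n\nabla z_n)\cdot\nabla\eta_R$ is absorbed uniformly in $n$ (using boundedness and a weighted Hardy inequality for $y^{1-2\beta}$) and tends to $0$ as $R\to\infty$, yielding $\int_{\{y>2R\}}\int_{\Omega}y^{1-2\beta}(|\nabla w_n|^2+|\nabla z_n|^2)\to0$ uniformly in $n$.

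Next, Theorem \ref{th:concentracion} provides an at most countable set $\{x_i\}_{i\in I}\subset\Omega$ and positive numbers $\nu_i,\zeta_i$ with $|u_n|^{2_{\beta}^*}\rightharpoonup|u|^{2_{\beta}^*}+\sum_i\nu_i\delta_{x_i}$, $y^{1-2\beta}|\nabla w_n|^2\rightharpoonup y^{1-2\beta}|\nabla w|^2+\sum_i\zeta_i\delta_{x_i}$ and $\zeta_i\ge S(\beta,N)\nu_i^{2/2_{\beta}^*}$. To quantify the atoms, test $(\Phi_{\g}^{\beta})'(w_n,z_n)$ against $(\phi_{i,\e}w_n,0)$ with $\phi_{i,\e}$ a cut--off supported in a ball of radius $\e$ about $(x_i,0)$ and $\phi_{i,\e}(x_i,0)=1$; letting $n\to\infty$ and then $\e\to0$, the coupling term $\sqrt{\g}\int_{\Omega}\phi_{i,\e}(\cdot,0)u_nv_n$ vanishes (strong $L^2$ convergence of traces and shrinking support) and the cross term $\kappa_{\beta}\int_{\mathcal{C}_{\Omega}}y^{1-2\beta}w_n\nabla w_n\cdot\nabla\phi_{i,\e}$ vanishes (a Cauchy--Schwarz/H\"older estimate plus absolute continuity of $y^{1-2\beta}|\nabla w|^2\,dx\,dy$), leaving $\kappa_{\beta}\zeta_i=\nu_i$; with $\zeta_i\ge S(\beta,N)\nu_i^{2/2_{\beta}^*}$ this forces $\nu_i\ge(\kappa_{\beta}S(\beta,N))^{2_{\beta}^*/(2_{\beta}^*-2)}=(\kappa_{\beta}S(\beta,N))^{N/2\beta}$ for every $i\in I$.

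Finally, one rules out concentration via the level. Since $(\Phi_{\g}^{\beta})'(w_n,z_n)\to0$ and the sequence is bounded, $\Phi_{\g}^{\beta}(w_n,z_n)-\tfrac12\langle(\Phi_{\g}^{\beta})'(w_n,z_n),(w_n,z_n)\rangle\to c$, and because $p+1=2_{\beta}^*$ the quadratic and coupling terms cancel, so $c=\big(\tfrac12-\tfrac1{2_{\beta}^*}\big)\big(\int_{\Omega}|u|^{2_{\beta}^*}+\sum_{i\in I}\nu_i\big)$. If some $\nu_{i_0}>0$, then $c\ge\big(\tfrac12-\tfrac1{2_{\beta}^*}\big)(\kappa_{\beta}S(\beta,N))^{N/2\beta}=c_{\beta}^*$, contradicting $c<c_{\beta}^*$; hence $I=\emptyset$ and $u_n\to u$ in $L^{2_{\beta}^*}(\Omega)$. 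Plugging $(w_n-w,z_n-z)$ into $(\Phi_{\g}^{\beta})'(w_n,z_n)$, the coupling terms vanish (strong $L^2$ convergence of traces and weak convergence) and $\int_{\Omega}|u_n|^{2_{\beta}^*-2}u_n(u_n-u)\to0$ (bounded in $L^{(2_{\beta}^*)'}$ against strong convergence in $L^{2_{\beta}^*}$), so $\kappa_{\beta}\int y^{1-2\beta}\nabla w_n\cdot\nabla(w_n-w)+\kappa_{\beta}\int y^{1-2\beta}\nabla z_n\cdot\nabla(z_n-z)\to0$; subtracting the analogous quantities with $(w,z)$ in place of $(w_n,z_n)$ (zero by weak convergence) gives $\|w_n-w\|_{\mathcal{X}_0^{\beta}(\mathcal{C}_{\Omega})}^2+\|z_n-z\|_{\mathcal{X}_0^{\beta}(\mathcal{C}_{\Omega})}^2\to0$, the PS condition. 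The main obstacle I foresee is the two localization estimates --- tightness in the $y$--direction and the negligibility of the coupling and gradient cross terms as $\e\to0$, which is what makes the clean identity $\kappa_{\beta}\zeta_i=\nu_i$ hold; once these are in place the rest is standard.
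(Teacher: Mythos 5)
Your overall scheme matches the paper's: boundedness from Lemma \ref{rem}, tightness, invocation of Theorem \ref{th:concentracion}, quantification of the atoms by testing the derivative against localized cut-offs, exclusion of concentration from the level bound, and finally strong convergence by testing against $(w_n-w,z_n-z)$. Two points are worth flagging.

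First, your tightness argument as stated has a gap. Testing against $(\eta_R w_n,\eta_R z_n)$ is fine, but invoking ``boundedness and a weighted Hardy inequality for $y^{1-2\beta}$'' to show the cross term $\kappa_{\beta}\int y^{1-2\beta}w_n\nabla w_n\cdot\nabla\eta_R$ tends to $0$ as $R\to\infty$ uniformly in $n$ does not close. The Hardy inequality $\int_{\mathcal{C}_{\Omega}}y^{-1-2\beta}w_n^2\le C\|w_n\|_{\mathcal{X}_0^{\beta}}^2$ gives a uniform \emph{bound} on $\int_{\{R<y<2R\}}y^{1-2\beta}w_n^2|\nabla\eta_R|^2$, not decay in $R$: with $|\nabla\eta_R|\le C/R\le C'/y$ you only obtain that the cross term is $\le CM$, which is not small. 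The decay you need to prove is itself a tightness-type statement, so the argument is circular. Either use Poincar\'e in the $x$--variable on the bounded cross-section $\Omega$, which gives $\int_\Omega w_n(\cdot,y)^2\,dx\le C_\Omega\int_\Omega|\nabla_x w_n(\cdot,y)|^2\,dx$ and hence cross term $\le CM/R\to 0$, or, as the paper does, run the Ambrosetti--Garcia Azorero--Peral pigeonhole argument (choosing a good slab $I_{k_0}$ where the gradient energy is at most $\varepsilon$) so that the cross term is bounded by $C\varepsilon$ with no $R$-decay needed. As written, the Hardy invocation does not substitute for either of these.

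Second, a genuine (and correct) departure: to quantify the atoms you test against $(\phi_{i,\varepsilon}w_n,0)$, which yields the identity $\kappa_{\beta}\zeta_i=\nu_i$ and drops the second concentration measure $\widetilde{\zeta}_i$ of $z_n$ entirely. The paper instead tests against $(\phi_{\varepsilon}w_n,\phi_{\varepsilon}z_n)$ and obtains $\nu_j=\kappa_{\beta}(\zeta_j+\widetilde{\zeta}_j)$; it then discards $\widetilde{\zeta}_j\ge 0$ to reach the same lower bound $\nu_j\ge(\kappa_{\beta}S(\beta,N))^{2_{\beta}^*/(2_{\beta}^*-2)}$. Your variant is slightly leaner and avoids tracking $\widetilde{\zeta}_i$ at all; both give the required threshold, so this is a simplification rather than an error. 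The rest of your argument (cancellation of the coupling term in $\Phi_{\g}^{\beta}-\tfrac12\langle(\Phi_{\g}^{\beta})',\cdot\rangle$, the contradiction with $c<c_{\beta}^*$, and the strong convergence step) is correct and mirrors the paper.
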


\begin{proof}\renewcommand{\qedsymbol}{}
Let $\{(w_n,z_n)\}_{n\in\mathbb{N}}\subset \mathcal{X}_0^{\beta}(\mathcal{C}_{\Omega})\times \mathcal{X}_0^{\beta}(\mathcal{C}_{\Omega})$ be a PS sequence at level $c$ for the functional  $\Phi_{\g}^{\beta}$, i.e.
\begin{equation}\label{critic}
\Phi_{\g}^{\beta}(w_n,z_n)\to c<c_{\beta}^*\quad\mbox{and}\quad \left(\Phi_{\g}^{\beta}\right)'(w_n,z_n)\to 0.
\end{equation}
From \eqref{critic} and Lemma \ref{rem} we get that the sequence $\{(w_n,z_n)\}_{n\in\mathbb{N}}$ is uniformly bounded in $\mathcal{X}_0^{\beta}(\mathcal{C}_{\Omega})\times \mathcal{X}_0^{\beta}(\mathcal{C}_{\Omega})$, in other words, there exists a finite $M>0$ such that
\begin{equation}
\label{Mfrac}
||w_n||_{\mathcal{X}_0^{\beta}(\mathcal{C}_{\Omega})}^2+||z_n||_{\mathcal{X}_0^{\beta}(\mathcal{C}_{\Omega})}^2\leq M,
\end{equation}
and, as a consequence, we can assume that, up to a subsequence,
\begin{align}\label{conver}
& w_n\rightharpoonup w\quad\mbox{weakly in}\ \mathcal{X}_0^{\beta}(\mathcal{C}_{\Omega}),\nonumber\\
& w_n(x,0) \rightarrow w(x,0)\quad\mbox{strong in}\ L^r(\Omega), \hbox{with}\quad 1\leq r<2_{\beta}^*,\nonumber\\
& w_n(x,0) \rightarrow w(x,0)\quad\mbox{a.e. in}\ \Omega,
\end{align}
and
\begin{align}\label{conver2}
& z_n\rightharpoonup z\quad\mbox{weakly in}\ \mathcal{X}_0^{\beta}(\mathcal{C}_{\Omega}),\nonumber\\
& z_n(x,0) \rightarrow z(x,0)\quad\mbox{strong in}\ L^r(\Omega), 1\leq r<2_{\beta}^*,\nonumber\\
& z_n(x,0) \rightarrow z(x,0)\quad\mbox{a.e. in}\ \Omega.
\end{align}
Before applying Theorem \ref{th:concentracion}, first we need to check that the PS sequence $\{(w_n,z_n)\}_{n\in\mathbb{N}}$ is tight. To avoid any unnecessary technical details,
and since the functional $\Phi_{\g}^{\beta}$ is obtained as a particular case (up to a multiplication by $\sqrt{\g}$) of the functional
$\Phi_{\g}^{\alpha,\beta}$ when $\alpha=2\beta$, we prove the following.
\end{proof}
\begin{lemma}
A PS sequence $\{(w_n,z_n)\}_{n\in\mathbb{N}}\subset \mathcal{X}_0^{\mu}(\mathcal{C}_{\Omega})\times \mathcal{X}_0^{\beta}(\mathcal{C}_{\Omega})$ at level $c$
for the functional $\Phi_{\g}^{\alpha,\beta}$ is tight.
\end{lemma}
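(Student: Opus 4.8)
The plan is to show that the failure of tightness forces a loss of mass at $y=\infty$, which in turn would push the energy level $c$ up to (or above) the critical threshold $c_\mu^*$, contradicting the hypothesis $c<c_\mu^*$. Concretely, I would argue by contradiction: suppose the sequence $\{(y^{1-2\mu}|\nabla w_n|^2,\,y^{1-2\beta}|\nabla z_n|^2)\}$ is not tight. Then there is $\eta_0>0$ such that for every $\rho>0$ we can find arbitrarily large $n$ with
\[
\int_{\{y>\rho\}}\int_\Omega y^{1-2\mu}|\nabla w_n|^2\,dx\,dy+\int_{\{y>\rho\}}\int_\Omega y^{1-2\beta}|\nabla z_n|^2\,dx\,dy\ge\eta_0.
\]

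First I would introduce smooth cut-off functions in the $y$-variable: fix a function $\psi_\rho(y)\in C^\infty([0,\infty))$ with $\psi_\rho\equiv 0$ on $[0,\rho]$, $\psi_\rho\equiv 1$ on $[2\rho,\infty)$, and $|\psi_\rho'|\le C/\rho$. Testing the equation $(\Phi_\g^{\alpha,\beta})'(w_n,z_n)\to 0$ against $(\psi_\rho w_n,\psi_\rho z_n)$ — which is an admissible test pair in $\mathcal{X}_0^\mu(\mathcal{C}_\Omega)\times\mathcal{X}_0^\beta(\mathcal{C}_\Omega)$ since the cut-off vanishes near the trace $y=0$ — kills the boundary (trace) contributions: the terms $\int_\Omega w_n(x,0)z_n(x,0)\,dx$ and $\int_\Omega |w_n(x,0)|^{p+1}\,dx$ do not appear because $\psi_\rho(0)=0$. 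What remains is
\[
\frac{\kappa_\mu}{\gamma^{1-\beta/\alpha}}\int_{\mathcal{C}_\Omega}y^{1-2\mu}\psi_\rho|\nabla w_n|^2\,dx\,dy+\frac{\kappa_\beta}{\gamma^{\beta/\alpha}}\int_{\mathcal{C}_\Omega}y^{1-2\beta}\psi_\rho|\nabla z_n|^2\,dx\,dy = -\,\text{(error terms involving }\psi_\rho')+o(1),
\]
where the error terms are of the form $\int y^{1-2\mu}w_n\,\partial_y\psi_\rho\,\partial_y w_n$ and similarly for $z_n$. Using Cauchy–Schwarz together with the uniform bound \eqref{Mfrac} (via Lemma \ref{rem}), these error terms are controlled by $\frac{C}{\rho}\big(\int_{\{\rho<y<2\rho\}} y^{1-2\mu}|w_n|^2\big)^{1/2}M^{1/2}$ and analogously; the weighted Hardy/Poincaré-type inequality on the annular region $\{\rho<y<2\rho\}$ (or a direct scaling estimate using the weight) bounds $\int_{\{\rho<y<2\rho\}} y^{-1-2\mu}|w_n|^2$ by a constant times the Dirichlet-type energy on that slab, and hence the full error is $o(1)$ as $\rho\to\infty$, uniformly in $n$.

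The key step — and the main obstacle — is then to turn the resulting near-identity into a contradiction with $c<c_\mu^*$. From the test computation above, the tail energy $\int_{\{y>2\rho\}}(\cdots)$ is essentially self-balancing, which combined with the expression for $\Phi_\g^{\alpha,\beta}(w_n,z_n)$ and $(\Phi_\g^{\alpha,\beta})'(w_n,z_n)|(w_n,z_n)\to 0$ shows that a persistent amount $\eta_0$ of energy concentrated near $y=\infty$ contributes at least $\big(\tfrac12-\tfrac1{2_\mu^*}\big)\eta_0>0$ (up to the $\gamma$-weights) to the limiting level $c$ while contributing nothing to any of the trace terms — but any such ``bubble at infinity'' must, after rescaling, realize the sharp trace constant $S(\mu,N)$ on the half-space, so its energy contribution is bounded below by $c_\mu^*$; since we already have $c<c_\mu^*$ this is impossible. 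I would make the lower bound precise by the standard argument: rescaling the slab $\{y>\rho\}$ and using the Sobolev trace inequality on $\mathbb{R}^{N+1}_+$ from the Remark, any nonvanishing tail profile $\tilde w\in\mathcal{X}_0^\mu(\mathbb{R}^{N+1}_+)$ solving the limit equation $-\mathrm{div}(y^{1-2\mu}\nabla\tilde w)=0$ with zero trace data is forced to vanish — so in fact the tail cannot carry a nontrivial weak limit, and the only way it carries $\ge\eta_0$ of energy is through concentration, again contradicting $c<c_\mu^*$ by Theorem \ref{th:concentracion} applied after a dyadic decomposition in $y$. Hence no such $\eta_0$ exists and the sequence is tight.
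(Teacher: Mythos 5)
Your core idea of killing the trace terms by testing $(\Phi_\gamma^{\alpha,\beta})'(w_n,z_n)\to 0$ against a $y$-cut-off pair $(\psi w_n,\psi z_n)$ is the same as the paper's, but your specific choice of cut-off and the way you close the argument both have genuine gaps.

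\smallskip
\emph{The error does not vanish as $\rho\to\infty$.} With $\psi_\rho$ transitioning over $\{\rho<y<2\rho\}$ and $|\psi_\rho'|\le C/\rho$, Cauchy--Schwarz gives the commutator error at most
\[
\frac{C}{\rho}\left(\int_{\{\rho<y<2\rho\}}\int_\Omega y^{1-2\mu}|w_n|^2\,dx\,dy\right)^{1/2}M^{1/2}.
\]
On this slab $y\sim\rho$, so $y^{1-2\mu}\sim\rho^2 y^{-1-2\mu}$, and the Hardy inequality on the whole cylinder gives $\int_{\mathcal{C}_\Omega}y^{-1-2\mu}|w_n|^2\le CM$. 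That yields $\int_{\{\rho<y<2\rho\}}y^{1-2\mu}|w_n|^2\le C\rho^2 M$, so after the $C/\rho$ prefactor the error is merely $O(1)$ in $\rho$: the powers of $\rho$ cancel exactly. To get $o(1)$ you would need the slab quantity $\int_{\{\rho<y<2\rho\}}\int_\Omega y^{-1-2\mu}|w_n|^2$ (or the slab Dirichlet energy) to tend to zero \emph{uniformly in $n$} as $\rho\to\infty$ --- but that uniform decay is precisely the tightness you are trying to prove, so the step is circular.

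\smallskip
\emph{The ``bubble at infinity'' step is also flawed (and unnecessary).} A profile concentrating near $y=\infty$ has vanishing trace on $\{y=0\}$, so the sharp trace inequality $\|\phi\|^2_{\mathcal{X}_0^\mu}\ge S(\mu,N)\|\phi(\cdot,0)\|_{2_\mu^*}^2$ imposes no lower bound on its energy --- the right-hand side is zero, so the bubble can carry \emph{any} amount of Dirichlet energy, not necessarily $\ge c_\mu^*$. Moreover, from $\Phi_\gamma^{\alpha,\beta}(w_n,z_n)-\tfrac12\langle(\Phi_\gamma^{\alpha,\beta})'(w_n,z_n),(w_n,z_n)\rangle=(\tfrac12-\tfrac1{2_\mu^*})\gamma^{\beta/\alpha-1}\int_\Omega|w_n(x,0)|^{2_\mu^*}dx$, the level $c$ is determined entirely by the traces; a far-field bubble therefore contributes nothing to $c$, contrary to your claim that it would contribute $(\tfrac12-\tfrac1{2_\mu^*})\eta_0$. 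The hypothesis $c<c_\mu^*$ simply plays no role in tightness.

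\smallskip
\emph{The missing idea.} The paper does not send $\rho\to\infty$. It fixes $\varepsilon>0$, partitions $\{y>\rho_0\}$ into unit-width slabs $I_k=\{\rho_0+k\le y\le\rho_0+k+1\}$ for $k=0,\dots,j$ with $j=\left[\frac{M}{\varepsilon\kappa}\right]$, and observes that since the sum over these $j+1$ slabs of the slab energies is at most $M/\kappa<\varepsilon(j+1)$, by the pigeonhole principle some slab $I_{k_0}$ carries energy at most $\varepsilon$. Placing the cut-off transition inside $I_{k_0}$ (transition width $1$, so $|X'|\le C$) then bounds the commutator by the energy in $I_{k_0}$, which is small by construction, giving the contradiction directly. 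This requires only the uniform bound $\|(w_n,z_n)\|\le M$ from Lemma \ref{rem} and no concentration-compactness input; tightness holds for any bounded PS sequence.
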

\begin{proof}
The proof is similar to the proof of Lemma 3.6 in \cite{BCdPS}, which follows some arguments contained in \cite{AAP}, and we include it for the reader's convenience. By contradiction, suppose that there exists $\eta_0>0$ and $m_0\in\mathbb{N}$ such that for any $\rho>0$ we have, up to a subsequence,
\begin{equation}\label{contradiction}
\int_{\{y>\rho\}}\int_{\Omega}y^{1-2\mu}|\nabla w_n|^2dxdy+\int_{\{y>\rho\}}\int_{\Omega}y^{1-2\beta}|\nabla z_n|^2dxdy>\eta_0,\quad\forall m\geq m_0.
\end{equation}
Let $\varepsilon>0$ be fixed (to be determined later), and let $\rho_0>0$ such that

\begin{equation*}
\int_{\{y>\rho_0\}}\int_{\Omega}y^{1-2\mu}|\nabla w|^2dxdy+\int_{\{y>\rho_0\}}\int_{\Omega}y^{1-2\beta}|\nabla z|^2dxdy<\varepsilon.
\end{equation*}
Let $j=\left[\frac{M}{\varepsilon\kappa}\right]$ be the integer part, with
$\kappa=\min\left\{\frac{\kappa_{\mu}}{\gamma^{1-\beta/\alpha}},\frac{\kappa_{\beta}}{\gamma^{\beta/\alpha}}\right\}$ and
$I_k=\{y\in\mathbb{R}^+:\rho_0+k\leq y\leq \rho_0+k+1\}$, $k=0,1,\ldots,j$. Then, using \eqref{Mfrac},
\begin{align*}
\sum_{k=0}^{j}\int_{I_k}\int_{\Omega}y^{1-2\mu}|\nabla w_n|^2dxdy&+\int_{I_k}\int_{\Omega}y^{1-2\beta}|\nabla z_n|^2dxdy\\
&\leq\int_{\mathcal{C}_{\Omega}}y^{1-2\mu}|\nabla w_n|^2dxdy+\int_{\mathcal{C}_{\Omega}}y^{1-2\beta}|\nabla z_n|^2dxdy\\
&\leq\frac{M}{\kappa}<\varepsilon(j+1).
\end{align*}
Hence, there exists $k_0\in\{0,1,\ldots,j\}$ such that
\begin{equation}\label{menor}
\int_{I_{k_0}}\int_{\Omega}y^{1-2\mu}|\nabla w_n|^2dxdy+\int_{I_{k_0}}\int_{\Omega}y^{1-2\beta}|\nabla z_n|^2dxdy\leq\varepsilon.
\end{equation}
Take now a smooth cut-off function
\begin{equation*}
X(y)=\left\{
\begin{tabular}{lcl}
$0$&&if $y\leq r+k_0$,\\
$1$&&if $y\geq r+k_0+1$,
\end{tabular}
\right.
\end{equation*}
and define $(t_n,s_n)=(X(y)w_n,X(y)z_n)$. Then
\begin{align*}
&\left|\left\langle \left(\Phi_{\g}^{\alpha,\beta}\right)'(w_n,z_n)-\left(\Phi_{\g}^{\alpha,\beta}\right)'(t_n,s_n)\Big|(t_n,s_n)\right\rangle\right|\\
&=\frac{\kappa_{\mu}}{\gamma^{1-\beta/\alpha}}\int_{\mathcal{C}_{\Omega}}y^{1-2\mu}\langle\nabla(w_n-t_n),\nabla t_n\rangle dxdy+\frac{\kappa_{\beta}}{\gamma^{\beta/\alpha}}\int_{\mathcal{C}_{\Omega}}y^{1-2\beta}\langle\nabla(z_n-s_n),\nabla s_n\rangle dxdy\\
&=\frac{\kappa_{\mu}}{\gamma^{1-\beta/\alpha}}\int_{I_{k_0}}\int_{\Omega}y^{1-2\mu}\langle\nabla(w_n-t_n),\nabla t_n\rangle dxdy+\frac{\kappa_{\beta}}{\gamma^{\beta/\alpha}}\int_{I_{k_0}}\int_{\Omega}y^{1-2\beta}\langle\nabla(z_n-s_n),\nabla s_n\rangle dxdy.
\end{align*}
Now, because of the Cauchy-Schwarz inequality, inequality \eqref{menor} and the compact inclusion\footnote{Let us recall that $\beta\in(0,1)$ and $\mu:=\alpha-\beta\in(0,1)$ thus, the weights $w_1(x,y)=y^{1-2\mu}$ and $w_2(x,y)=y^{1-2\beta}$ belongs to the Muckenhoupt class $A_2$. We refer to \cite{FKS} for the precise definition as well as some useful properties of the weights belonging to the Muckenhoupt classes $A_p$.},
\small{\begin{equation*}
H^1(I_{k_0}\times\Omega,y^{1-2\mu}dxdy)\times H^1(I_{k_0}\times\Omega,y^{1-2\beta}dxdy)\hookrightarrow L^2(I_{k_0}\times\Omega,y^{1-2\mu}dxdy)\times L^2(I_{k_0}\times\Omega,y^{1-2\beta}dxdy),
\end{equation*}}
it follows that,
\begin{align*}
&\left|\left\langle \left(\Phi_{\g}^{\alpha,\beta}\right)'(w_n,z_n)-\left(\Phi_{\g}^{\alpha,\beta}\right)'(t_n,s_n)\Big|(t_n,s_n) \right\rangle\right|\\
&\leq\frac{\kappa_{\mu}}{\gamma^{1-\beta/\alpha}}\left(\int_{I_{k_0}}\int_{\Omega}y^{1-2\mu}|\nabla(w_n-t_n)|^2dxdy\right)^{1/2}\left(\int_{I_{k_0}}\int_{\Omega}y^{1-2\mu}|\nabla t_n|^2dxdy\right)^{1/2}\\
&+\frac{\kappa_{\beta}}{\gamma^{\beta/\alpha}}\left(\int_{I_{k_0}}\int_{\Omega}y^{1-2\beta}|\nabla(z_n-s_n)|^2dxdy\right)^{1/2}\left(\int_{I_{k_0}}\int_{\Omega}y^{1-2\beta}|\nabla s_n|^2dxdy\right)^{1/2}\\
&\leq\max\left\{\frac{\kappa_{\mu}}{\gamma^{1-\beta/\alpha}},\frac{\kappa_{\beta}}{\gamma^{\beta/\alpha}}\right\}c\varepsilon \leq C\varepsilon,
\end{align*}
where $C:=c\max\left\{\frac{\kappa_{\mu}}{\gamma^{1-\beta/\alpha}},\frac{\kappa_{\beta}}{\gamma^{\beta/\alpha}}\right\}>0$. On the other hand, by \eqref{critic},
\begin{equation*}
\left|\left\langle
\left(\Phi_{\g}^{\alpha,\beta}\right)'(t_n,s_n)\Big|(t_n,s_n)
\right\rangle\right|\leq c_1\varepsilon+o(1),
\end{equation*}
with $c_1$ a positive constant. Thus, we conclude
\begin{align*}
\int_{\{y>r+k_0+1\}}\int_{\Omega}y^{1-2\mu}|\nabla w_n|^2dxdy&+\int_{\{y>r+k_0+1\}}\int_{\Omega}y^{1-2\beta}|\nabla z_n|^2dxdy\\
&\leq\int_{\mathcal{C}_{\Omega}}y^{1-2\mu}|\nabla t_n|^2dxdy+\int_{\mathcal{C}_{\Omega}}y^{1-2\beta}|\nabla s_n|^2dxdy\\
&\leq\frac{1}{\kappa}\left\langle
\left(\Phi_{\g}^{\alpha,\beta}\right)'(t_n,s_n)\Big|(t_n,s_n)
\right\rangle \leq C \varepsilon,
\end{align*}
in contradiction with \eqref{contradiction}. Hence, the sequence is tight.
\end{proof}
\begin{proof}[Continuation proof Lemma \ref{PScondition_extensionsistemabb}]
Once we have proved that the PS sequence
$$\{(w_n,z_n)\}_{n\in\mathbb{N}}\subset \mathcal{X}_0^{\beta}(\mathcal{C}_{\Omega})\times \mathcal{X}_0^{\beta}(\mathcal{C}_{\Omega}),$$
is tight, we can apply Theorem \ref{th:concentracion}. Consequently, up to a subsequence, there exists an at most countable set $I$,
a sequence of points $\{x_i\}_{i\in I}\subset\Omega$ and non-negative real numbers $\nu_i$ and $\zeta_i$ such that

\begin{itemize}
\item $|u_n|^{2_{\beta}^*}\to \nu=|u|^{2_{\beta}^*}+\sum\limits_{i\in I}\nu_i\delta_{x_i},$
\item $y^{1-2\beta}|\nabla w_n|^2\to\zeta=y^{1-2\beta}|\nabla w|^2+\sum\limits_{i\in I}\zeta_i\delta_{x_i},$
\item $y^{1-2\beta}|\nabla z_n|^2\to\widetilde{\zeta}=y^{1-2\beta}|\nabla z|^2+\sum\limits_{i\in I}\widetilde{\zeta}_i\delta_{x_i},$
\end{itemize}
where $\delta_{x_i}$ is the Dirac's delta centered at $x_i$ and satisfying,
\begin{equation}\label{in:concentracion}
\zeta_i\geq S(\mu,N)\nu_i^{2/2_{\mu}^*}.
\end{equation}
We fix $j\in I$ and we let $\phi\in\mathcal{C}_0^{\infty}(\mathbb{R}_+^{N+1})$ be a non-increasing smooth cut-off function verifying $\phi=1$ in $B_1^+(x_{j})$, $\phi=0$ in $B_2^+(x_{j})^c$, with $B_r^+(x_j)\subset\mathbb{R}^{N}\times\{y\geq0\}$ the $(N+1)$-dimensional semi-ball of radius $r>0$ centered at $x_j$. Let now $\phi_{\varepsilon}(x,y)=\phi(x/\varepsilon,y/\varepsilon)$, such that $|\nabla\phi_{\varepsilon}|\leq\frac{C}{\varepsilon}$ and denote $\Gamma_{2\varepsilon}=B_{2\varepsilon}^+(x_{j})\cap\{y=0\}$. Therefore, since by \eqref{critic}
\begin{equation}\label{tocero}
\left(\Phi_{\g}^{\beta}\right)'(w_n,z_n)\to 0\quad \hbox{in the dual space}\quad  \left(\mathcal{X}_0^{\beta}(\mathcal{C}_{\Omega})\times \mathcal{X}_0^{\beta}(\mathcal{C}_{\Omega})\right)',
\end{equation}
taking the dual product in \eqref{tocero} with $(\phi_{\varepsilon}w_n,\phi_{\varepsilon}z_n)$, we obtain
\begin{equation*}
\begin{split}
\lim_{n\to\infty}&\left(\kappa_{\beta}\int_{\mathcal{C}_{\Omega}}y^{1-2\beta}\nabla w_n\nabla(\phi_{\varepsilon}w_n)dxdy+\kappa_{\beta}\int_{\mathcal{C}_{\Omega}}y^{1-2\beta}\nabla z_n\nabla(\phi_{\varepsilon}z_n)dxdy\right.\\
&\ \ \left.-2\sqrt{\gamma}\int_{\Gamma_{2\varepsilon}}\phi_{\varepsilon}w_n(x,0)z_n(x,0)dx-\int_{\Gamma_{2\varepsilon}}\phi_{\varepsilon}|w_n|^{2_{\beta}^*}(x,0)dx\right)=0.
\end{split}
\end{equation*}
Hence,
\begin{equation*}
\begin{split}
&\lim_{n\to\infty}\left(\kappa_{\beta}\int_{\mathcal{C}_{\Omega}}y^{1-2\beta}\langle\nabla w_n,\nabla\phi_{\varepsilon} \rangle w_ndxdy+\kappa_{\beta}\int_{\mathcal{C}_{\Omega}}y^{1-2\beta}\langle\nabla z_n,\nabla\phi_{\varepsilon} \rangle z_ndxdy\right)\\
&=\lim_{n\to\infty}\left(2\sqrt{\gamma}\int_{\Gamma_{2\varepsilon}}\phi_{\varepsilon}w_n(x,0)z_n(x,0)dx+\int_{\Gamma_{2\varepsilon}}\phi_{\varepsilon}|w_n|^{2_{\beta}^*}(x,0)dx\right.\\
&\ \ \ \ \ \ \ \ \ \ \ \ \left.-\kappa_{\beta}\int_{\mathcal{C}_{\Omega}}y^{1-2\beta}\phi_{\varepsilon}|\nabla w_n|^2dxdy-\kappa_{\beta}\int_{\mathcal{C}_{\Omega}}y^{1-2\beta}\phi_{\varepsilon}|\nabla z_n|^2dxdy\right).
\end{split}
\end{equation*}
Moreover, thanks to \eqref{conver}, \eqref{conver2} and Theorem \ref{th:concentracion}, we find,
\begin{equation}\label{eq:tozero}
\begin{split}
&\lim_{n\to\infty}\left(\kappa_{\beta}\int_{\mathcal{C}_{\Omega}}y^{1-2\beta}\langle\nabla w_n,\nabla\phi_{\varepsilon} \rangle w_ndxdy+\kappa_{\beta}\int_{\mathcal{C}_{\Omega}}y^{1-2\beta}\langle\nabla z_n,\nabla\phi_{\varepsilon} \rangle z_ndxdy\right)\\
&\ \ \ \ \ \ \ =2\sqrt{\gamma}\int_{\Gamma_{2\varepsilon}}\phi_{\varepsilon}w(x,0)z(x,0)dx+\int_{\Gamma_{2\varepsilon}}\phi_{\varepsilon}d\nu-\kappa_{\beta}\int_{B_{2\varepsilon}^+(x_{j})}\phi_{\varepsilon}d\zeta-\kappa_{\beta}\int_{B_{2\varepsilon}^+(x_{j})}\phi_{\varepsilon}d\widetilde{\zeta}.
\end{split}
\end{equation}
Assume for the moment that the left hand side of \eqref{eq:tozero} vanishes as $\varepsilon\to0$. Then, it follows that,
\begin{equation*}
\begin{split}
0&=\lim_{\varepsilon\to0}2\sqrt{\gamma}\int_{\Gamma_{2\varepsilon}}\phi_{\varepsilon}w(x,0)z(x,0)dx+\int_{\Gamma_{2\varepsilon}}\phi_{\varepsilon}d\nu-\kappa_{\beta}\int_{B_{2\varepsilon}^+(x_{j})}\phi_{\varepsilon}d\zeta-\kappa_{\beta}\int_{B_{2\varepsilon}^+(x_{j})}\phi_{\varepsilon}d\widetilde{\zeta}\\
&=\nu_{j}-\kappa_{\beta}\zeta_{j}-\kappa_{\beta}\widetilde{\zeta}_{j},
\end{split}
\end{equation*}
and we conclude,
\begin{equation}\label{eq:compacidad}
\nu_{j}=\kappa_{\beta}\left(\zeta_{j}+\widetilde{\zeta}_{j}\right).
\end{equation}
Finally, we have two options, either the compactness of the PS sequence or concentration around those points $x_j$. In other words, either $\nu_{j}=0$, so that $\zeta_{j}=\widetilde{\zeta}_{j}=0$ or, thanks to \eqref{eq:compacidad} and \eqref{in:concentracion}, $\nu_{j}\geq\left(\kappa_{\beta}S(\beta,N)\right)^{\frac{2_{\beta}^*}{2_{\beta}^*-2}}$. In case of having concentration, we find,
\begin{equation*}
\begin{split}
c=&\lim_{n\to\infty}\Phi_{\g}^{\beta}(w_n,z_n)=\lim_{n\to\infty}\Phi_{\g}^{\beta}(w_n,z_n)-\frac{1}{2}\left\langle\left(\Phi_{\g}^{\beta}\right)'(w_n,z_n)\Big|(w_n,z_n)\right\rangle\\
=& \left(\frac{1}{2}-\frac{1}{2_{\beta}^*}\right)\int_{\O}|w(x,0)|^{2_{\beta}^*}dx+\left(\frac{1}{2}-\frac{1}{2_{\beta}^*}\right)\nu_{k_0}\\
\geq&\left(\frac{1}{2}-\frac{1}{2_{\beta}^*}\right)\left(\kappa_{\beta}S(\beta,N)\right)^{\frac{2_{\beta}^*}{2_{\beta}^*-2}}=c_{\beta}^*,
\end{split}
\end{equation*}
in contradiction with the hypotheses $c<c_{\beta}^*$. It only remains to prove that the left hand side of \eqref{eq:tozero} vanishes as $\varepsilon\to0$. Due to \eqref{critic} and Lemma \ref{rem},
the PS sequence $\{(w_n,z_n)\}_{n\in\mathbb{N}}$ is bounded in $\mathcal{X}_0^{\beta}(\mathcal{C}_{\Omega})\times\mathcal{X}_0^{\beta}(\mathcal{C}_{\Omega})$, so that, up to a subsequence,
\begin{equation*}
\begin{split}
(w_n,z_n)&\rightharpoonup(w,z)\in \mathcal{X}_0^{\beta}(\mathcal{C}_{\Omega})\times\mathcal{X}_0^{\beta}(\mathcal{C}_{\Omega}),\\
(w_n,z_n)&\rightarrow(w,z)\quad\mbox{a.e. in}\quad \mathcal{C}_{\Omega}.
\end{split}
\end{equation*}
Moreover, for $r<2^*=\frac{2(N+1)}{N-1}$ we have the compact inclusion,
\begin{equation*}
\mathcal{X}_0^{\beta}(\mathcal{C}_{\Omega})\times\mathcal{X}_0^{\beta}(\mathcal{C}_{\Omega})\hookrightarrow L^{r}(\mathcal{C}_{\Omega},y^{1-2\beta}dxdy)\times L^{r}(\mathcal{C}_{\Omega},y^{1-2\beta}dxdy).
\end{equation*}
Applying H\"older's inequality with $p=\frac{N+1}{N-1}$ and $q=\frac{N+1}{2}$, we find,
\begin{equation*}
\begin{split}
\int_{B_{2\varepsilon}^+(x_{k_0})}&  y^{1-2\beta}|\nabla\phi_{\varepsilon}|^2|w_n|^2dxdy\\
\leq& \left(\int_{B_{2\varepsilon}^+(x_{k_0})}\!\!\!\!\!\!\!\!\! y^{1-2\beta}|\nabla\phi_{\varepsilon}|^{N+1}dxdy\right)^{\frac{2}{N+1}}\left(\int_{B_{2\varepsilon}^+(x_{k_0})}\!\!\!\!\!\!\!\!\! y^{1-2\beta}|w_n|^{2\frac{N+1}{N-1}}dxdy\right)^{\frac{N-1}{(N+1)}}\\
\leq&\frac{1}{\varepsilon^2}\left(\int_{B_{2\varepsilon}(x_{k_0})}\int_0^\varepsilon y^{1-2\beta}dxdy\right)^{\frac{2}{N+1}}\left(\int_{B_{2\varepsilon}^+(x_{k_0})}\!\!\!\!\!\!\!\!\! y^{1-2\beta}|w_n|^{2\frac{N+1}{N-1}}dxdy\right)^{\frac{N-1}{(N+1)}}\\
\leq& c_0\varepsilon^{\frac{2(1-2\beta)}{N+1}}\left(\int_{B_{2\varepsilon}^+(x_{k_0})}\!\!\!\!\!\!\!\!\! y^{1-2\beta}|w_n|^{2\frac{N+1}{N-1}}dxdy\right)^{\frac{N-1}{(N+1)}}\\
\leq& c_0 \varepsilon^{\frac{2(1-2\beta)}{N+1}}\varepsilon^{\frac{(2+N-2\beta)(N-1)}{(N+1)}}\left(\int_{B_{2}^+(x_{k_0})}y^{1-2\beta}|w_n(\varepsilon x,\varepsilon y)|^{2\frac{N+1}{N-1}}dxdy\right)^{\frac{N-1}{(N+1)}}\\
\leq& c_1 \varepsilon^{N-2\beta}.
\end{split}
\end{equation*}
for appropriate positive constants $c_0$ and $c_1$. In a similar way,
\begin{equation*}
\int_{B_{2\varepsilon}^+(x_{k_0})}\!\!\!\!\!\!\!\!\! y^{1-2\beta}|\nabla\phi_{\varepsilon}|^2|z_n|^2dxdy\leq c_2 \varepsilon^{N-2\beta}.
\end{equation*}
Thus, we find that,
\begin{equation*}
\begin{split}
0\leq&\lim_{n\to\infty}\left|\kappa_{\beta}\int_{\mathcal{C}_{\Omega}}y^{1-2\beta}\langle\nabla w_n,\nabla\phi_{\varepsilon} \rangle w_ndxdy+\kappa_{\beta}\int_{\mathcal{C}_{\Omega}}y^{1-2\beta}\langle\nabla z_n,\nabla\phi_{\varepsilon} \rangle z_ndxdy\right|\\
\leq&\kappa_{\beta}\lim_{n\to\infty}\left(\int_{\mathcal{C}_{\Omega}}y^{1-2\beta}|\nabla w_n|^2dxdy\right)^{1/2}\left(\int_{B_{2\varepsilon}^+(x_{k_0})}y^{1-2s}|\nabla\phi_{\varepsilon}|^2|w_n|^2dxdy\right)^{1/2}\\
+&\kappa_{\beta}\lim_{n\to\infty}\left(\int_{\mathcal{C}_{\Omega}}y^{1-2\beta}|\nabla z_n|^2dxdy\right)^{1/2}\left(\int_{B_{2\varepsilon}^+(x_{k_0})}y^{1-2s}|\nabla\phi_{\varepsilon}|^2|z_n|^2dxdy\right)^{1/2}\\
\leq&C \varepsilon^{\frac{N-2\beta}{2}}\to0,
\end{split}
\end{equation*}
as $\varepsilon\to 0$ and the proof of the Lemma \ref{PScondition_extensionsistemabb} is complete.
\end{proof}
Next we show the corresponding result for the functional $\Phi_{\g}^{\alpha,\beta}$.
\begin{lemma}\label{PScondition_extensionsistemaab}
If $p=2_{\mu}^*-1$ the functional $\Phi_{\g}^{\alpha,\beta}$
satisfies the PS condition for any level $c$ below the critical
level defined by \eqref{levelmu}.
\end{lemma}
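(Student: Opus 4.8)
\noindent The argument mirrors that of Lemma~\ref{PScondition_extensionsistemabb}; the only genuine differences are bookkeeping ones, coming from the two distinct Muckenhoupt weights $y^{1-2\mu}$, $y^{1-2\beta}$ carried by the two components and from the constants $\gamma^{1-\beta/\alpha}$, $\gamma^{\beta/\alpha}$ in $\Phi_{\g}^{\alpha,\beta}$. First I would fix a PS sequence $\{(w_n,z_n)\}\subset\mathcal{X}_0^{\mu}(\mathcal{C}_{\Omega})\times\mathcal{X}_0^{\beta}(\mathcal{C}_{\Omega})$ with $\Phi_{\g}^{\alpha,\beta}(w_n,z_n)\to c<c_\mu^*$ and $(\Phi_{\g}^{\alpha,\beta})'(w_n,z_n)\to0$. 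By Lemma~\ref{rem} it is bounded, so, up to a subsequence, $w_n\rightharpoonup w$ in $\mathcal{X}_0^{\mu}(\mathcal{C}_{\Omega})$ and $z_n\rightharpoonup z$ in $\mathcal{X}_0^{\beta}(\mathcal{C}_{\Omega})$, with the traces converging strongly in $L^{r}(\Omega)$ for every $r$ below the respective critical exponents and a.e.\ in $\Omega$; by the tightness lemma proved above the sequence $\{(y^{1-2\mu}|\nabla w_n|^2,y^{1-2\beta}|\nabla z_n|^2)\}$ is tight. Hence Theorem~\ref{th:concentracion} applies to $\{w_n\}$ (now with exponent $2_\mu^*$ and constant $S(\mu,N)$): there are an at most countable set $I$, points $\{x_i\}_{i\in I}\subset\Omega$ and $\nu_i,\zeta_i\geq0$ with $|w_n(\cdot,0)|^{2_\mu^*}\rightharpoonup\nu=|w(\cdot,0)|^{2_\mu^*}+\sum_i\nu_i\delta_{x_i}$, $y^{1-2\mu}|\nabla w_n|^2\rightharpoonup\zeta=y^{1-2\mu}|\nabla w|^2+\sum_i\zeta_i\delta_{x_i}$ and $\zeta_i\geq S(\mu,N)\nu_i^{2/2_\mu^*}$; passing to a further subsequence, $y^{1-2\beta}|\nabla z_n|^2\rightharpoonup\widetilde\zeta$ for some nonnegative measure $\widetilde\zeta$, and I set $\widetilde\zeta_i:=\widetilde\zeta(\{x_i\})\geq0$ (no critical nonlinearity acts on the $z$-slot, so in fact $z_n\to z$ strongly and $\widetilde\zeta_i=0$, but this will not be needed).

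Next, for a fixed $j\in I$ and the scaled cut-off $\phi_\varepsilon$ concentrated at $x_j$ used in the proof of Lemma~\ref{PScondition_extensionsistemabb}, I would test $(\Phi_{\g}^{\alpha,\beta})'(w_n,z_n)$ against $(\phi_\varepsilon w_n,\phi_\varepsilon z_n)$ and expand $\nabla(\phi_\varepsilon w_n)=\phi_\varepsilon\nabla w_n+w_n\nabla\phi_\varepsilon$ (and similarly for $z_n$). Letting $n\to\infty$, using $(\Phi_{\g}^{\alpha,\beta})'(w_n,z_n)\to0$, the measure convergences and the strong $L^2(\Omega)$ convergence of the traces, one arrives at
\begin{align*}
\lim_{n\to\infty}\Big(&\frac{\kappa_\mu}{\gamma^{1-\beta/\alpha}}\int_{\mathcal{C}_{\Omega}}y^{1-2\mu}\langle\nabla w_n,\nabla\phi_\varepsilon\rangle w_n\,dxdy+\frac{\kappa_\beta}{\gamma^{\beta/\alpha}}\int_{\mathcal{C}_{\Omega}}y^{1-2\beta}\langle\nabla z_n,\nabla\phi_\varepsilon\rangle z_n\,dxdy\Big)\\
&=\frac{1}{\gamma^{1-\beta/\alpha}}\int\phi_\varepsilon\,d\nu+2\int_{\Omega}\phi_\varepsilon\,w(\cdot,0)z(\cdot,0)\,dx-\frac{\kappa_\mu}{\gamma^{1-\beta/\alpha}}\int\phi_\varepsilon\,d\zeta-\frac{\kappa_\beta}{\gamma^{\beta/\alpha}}\int\phi_\varepsilon\,d\widetilde\zeta.
\end{align*}
The left-hand side vanishes as $\varepsilon\to0$ by the same Cauchy--Schwarz and weighted-Sobolev estimates as in Lemma~\ref{PScondition_extensionsistemabb}, which here give the bounds $O(\varepsilon^{(N-2\mu)/2})$ for the $w$-term and $O(\varepsilon^{(N-2\beta)/2})$ for the $z$-term; on the right-hand side $\int\phi_\varepsilon\,w(\cdot,0)z(\cdot,0)\to0$ (the supports shrink and $w(\cdot,0)z(\cdot,0)\in L^1(\Omega)$), while $\int\phi_\varepsilon\,d\nu\to\nu_j$, $\int\phi_\varepsilon\,d\zeta\to\zeta_j$, $\int\phi_\varepsilon\,d\widetilde\zeta\to\widetilde\zeta_j$. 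Thus
\[
\frac{1}{\gamma^{1-\beta/\alpha}}\,\nu_j=\frac{\kappa_\mu}{\gamma^{1-\beta/\alpha}}\,\zeta_j+\frac{\kappa_\beta}{\gamma^{\beta/\alpha}}\,\widetilde\zeta_j\;\geq\;\frac{\kappa_\mu}{\gamma^{1-\beta/\alpha}}\,\zeta_j\;\geq\;\frac{\kappa_\mu}{\gamma^{1-\beta/\alpha}}\,S(\mu,N)\,\nu_j^{2/2_\mu^*},
\]
so for each $j$ either $\nu_j=0$ (whence $\zeta_j=0$ as well) or $\nu_j\geq(\kappa_\mu S(\mu,N))^{\frac{2_\mu^*}{2_\mu^*-2}}$.

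To exclude concentration I would then use that the quadratic and coupling parts cancel in the combination $\Phi_{\g}^{\alpha,\beta}(w_n,z_n)-\tfrac12\big\langle(\Phi_{\g}^{\alpha,\beta})'(w_n,z_n)\,\big|\,(w_n,z_n)\big\rangle$, leaving
\[
c=\lim_{n\to\infty}\Big(\Phi_{\g}^{\alpha,\beta}(w_n,z_n)-\tfrac12\big\langle(\Phi_{\g}^{\alpha,\beta})'(w_n,z_n)\,\big|\,(w_n,z_n)\big\rangle\Big)=\frac{1}{\gamma^{1-\beta/\alpha}}\Big(\tfrac12-\tfrac{1}{2_\mu^*}\Big)\Big(\int_{\Omega}|w(\cdot,0)|^{2_\mu^*}dx+\sum_{i\in I}\nu_i\Big);
\]
if some $\nu_j>0$ this forces $c\geq\frac{1}{\gamma^{1-\beta/\alpha}}\big(\tfrac12-\tfrac{1}{2_\mu^*}\big)(\kappa_\mu S(\mu,N))^{\frac{2_\mu^*}{2_\mu^*-2}}=c_\mu^*$, contradicting $c<c_\mu^*$. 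Hence $\nu_i=\zeta_i=0$ for every $i$, so $w_n(\cdot,0)\to w(\cdot,0)$ strongly in $L^{2_\mu^*}(\Omega)$ (weak convergence together with convergence of the norms in the uniformly convex space $L^{2_\mu^*}$). Finally, testing $(\Phi_{\g}^{\alpha,\beta})'(w_n,z_n)\to0$ on $(w_n-w,z_n-z)$, using this strong convergence to dispatch the critical term and the strong $L^2(\Omega)$ convergence of the traces to dispatch the coupling term, yields $\|w_n-w\|_{\mathcal{X}_0^{\mu}(\mathcal{C}_{\Omega})}^2+\|z_n-z\|_{\mathcal{X}_0^{\beta}(\mathcal{C}_{\Omega})}^2\to0$, i.e.\ the PS condition holds for every $c<c_\mu^*$. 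As in the $\beta$--$\beta$ case, the delicate point is precisely this localisation step — checking that the cut-off cross terms vanish as $\varepsilon\to0$ uniformly in $n$ — which is exactly where the two weights $y^{1-2\mu}$, $y^{1-2\beta}$ must be estimated separately and where the dimensional conditions $N>2\mu$ and $N>2\beta$ come in; both are guaranteed in the range $N>4\alpha-2\beta$ of Theorem~\ref{Th1}.
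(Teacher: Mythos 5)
Your proof is correct and follows exactly the route the paper prescribes (it declines to spell out the $\alpha\neq2\beta$ case, calling it "similar" to the $\beta$--$\beta$ proof of Lemma~\ref{PScondition_extensionsistemabb}), carrying out the concentration--compactness localization with the proper $\gamma$-weights and the weight $y^{1-2\beta}$ on the $z$-component; the identity $\gamma^{-(1-\beta/\alpha)}\nu_j=\gamma^{-(1-\beta/\alpha)}\kappa_\mu\zeta_j+\gamma^{-\beta/\alpha}\kappa_\beta\widetilde\zeta_j$ and the ensuing dichotomy are exactly what the paper's argument produces. Your remark that the $z$-term localization estimate requires $N>2\beta$ (not automatic from $N>2\mu$ when $\alpha<2\beta$, but guaranteed by the hypothesis $N>4\alpha-2\beta$ of Theorem~\ref{Th1}) is a genuine and useful observation that the paper leaves implicit, and your final recovery of strong convergence by testing $(\Phi_\gamma^{\alpha,\beta})'(w_n,z_n)$ against $(w_n-w,z_n-z)$ makes explicit a step the paper also leaves to the reader in the $\beta$--$\beta$ case.
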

The proof of this result is similar to the one of Lemma \ref{PScondition_extensionsistemabb}, so we omit the details for short.

\subsection{PS sequences under a critical level}
At this point, it remains to show that we can obtain
PS sequences for the functionals $\Phi_{\g}^{\beta}$ and $\Phi_{\g}^{\alpha,\beta}$ under the critical levels defined by \eqref{levelbeta} and \eqref{levelmu} respectively. To do so, we consider the extremal functions of the fractional Sobolev inequality \eqref{sobolev}, namely, given $\theta\in(0,1)$, we set
\begin{equation*}
u_{\varepsilon}^{\theta}(x)=\frac{\varepsilon^{\frac{N-2\theta}{2}}}{(\varepsilon^2+|x|^2)^{\frac{N-2\theta}{2}}},
\end{equation*}
and $w_{\varepsilon}^{\theta}=E_{\theta}[u_{\varepsilon}^{\theta}]$ its $\theta$-harmonic extension.
Then, since $w_{\varepsilon}^{\theta}$ is a minimizer of the Sobolev inequality, it holds
\begin{equation*}
S(\theta,N)=\frac{\displaystyle\int_{\mathbb{R}_+^{N+1}}y^{1-2\theta}|\nabla w_{\varepsilon}^{\theta} |^2dxdy}{\displaystyle\|u_{\varepsilon}^{\theta}\|_{L^{2_{\theta}^*}(\mathbb{R}^N)}^2}.
\end{equation*}
We take a non-increasing smooth cut-off function $\phi_0(t)\in\mathcal{C}_0^{\infty}(\mathbb{R}_+)$ such that
$$\phi_0(t)=1\quad \hbox{if}\quad 0\leq t\leq1/2\quad \hbox{and}\quad \phi_0(t)=0\quad \hbox{if}\quad t\geq 1.$$
Assume without loss of generality that $0\in\Omega$, $r>0$ small enough such that $\overline{B}_r^+\subseteq\overline{\mathcal{C}}_{\Omega}$, and define the function $\phi_r(x,y)=\phi_0(\frac{r_{x,y}}{r})$ where $r_{xy}=|(x,y)|=\left(|x|^2+y^2\right)^{1/2}$. Note that $\phi_r w_{\varepsilon}^{\theta}\in\mathcal{X}_0^{\theta}(\mathcal{C}_{\Omega})$. We recall now the following lemma proved in \cite{BCdPS}.
\begin{lemma}\label{estcol}
The family $\{\phi_r w_{\varepsilon}^{\theta}\}$ and its trace on $\{y=0\}$, denoted by $\{\phi_r u_{\varepsilon}^{\theta}\}$, satisfy
\begin{equation*}
\begin{split}
\|\phi_r w_{\varepsilon}^{\theta}\|_{\mathcal{X}_0^{\theta}(\mathcal{C}_{\Omega})}^{2}&=\|w_{\varepsilon}^{\theta}\|_{\mathcal{X}_0^{\theta}(\mathcal{C}_{\Omega})}^{2}+O(\varepsilon^{N-2\theta}),\\
\|\phi_r u_{\varepsilon}^{\theta}\|_{L^2(\Omega)}^{2}&=
\left\{
        \begin{tabular}{lc}
        $C \varepsilon^{2\theta}+O(\varepsilon^{N-2\theta})$ & if $N>4\theta$, \\
                $C \varepsilon^{2\theta}|\log(\varepsilon)|$ & if $N=4\theta$.
        \end{tabular}
        \right.
\end{split}
\end{equation*}
\end{lemma}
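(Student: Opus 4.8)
The plan is to reduce both estimates to elementary radial integrals by exploiting the exact scaling of the extremals together with the decay of the Poisson extension. By uniqueness of the $\theta$-harmonic extension and scaling invariance of $-\di(y^{1-2\theta}\n w)=0$ one has, writing $r_{xy}=(|x|^2+y^2)^{1/2}$,
\[
w_\varepsilon^\theta(x,y)=\varepsilon^{-\frac{N-2\theta}{2}}w_1^\theta\!\Big(\tfrac{x}{\varepsilon},\tfrac{y}{\varepsilon}\Big),\qquad \n w_\varepsilon^\theta(x,y)=\varepsilon^{-\frac{N-2\theta}{2}-1}(\n w_1^\theta)\!\Big(\tfrac{x}{\varepsilon},\tfrac{y}{\varepsilon}\Big),
\]
so that all powers of $\varepsilon$ cancel in the weighted Dirichlet energy. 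First I would record, from the Poisson kernel representation recalled in the Introduction (see \cite{BrCdPS}), the bounds $w_1^\theta(x,y)\le C(1+r_{xy})^{-(N-2\theta)}$ and $\int_{\{r_{xy}\ge R\}}y^{1-2\theta}|\n w_1^\theta|^2\,dx\,dy\le CR^{-(N-2\theta)}$ for $R\ge1$; after rescaling these read, for $r>0$ fixed,
\[
w_\varepsilon^\theta(x,y)\le\frac{C\varepsilon^{\frac{N-2\theta}{2}}}{(\varepsilon+r_{xy})^{N-2\theta}},\qquad \int_{\{r_{xy}\ge r/2\}}y^{1-2\theta}|\n w_\varepsilon^\theta|^2\,dx\,dy\le C\varepsilon^{N-2\theta}.
\]

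For the first identity, I would use that $\phi_r\equiv1$ on $B_{r/2}^+$, $\supp\phi_r\subset B_r^+\subset\mathcal{C}_{\Omega}$, $|\n\phi_r|\le Cr^{-1}$, and expand $|\n(\phi_r w_\varepsilon^\theta)|^2=\phi_r^2|\n w_\varepsilon^\theta|^2+2\phi_r w_\varepsilon^\theta\,\n\phi_r\!\cdot\!\n w_\varepsilon^\theta+(w_\varepsilon^\theta)^2|\n\phi_r|^2$. Since $1-\phi_r^2$ and $\n\phi_r$ are supported in the annulus $A_r=\{r/2\le r_{xy}\le r,\ y>0\}\subset\mathcal{C}_{\Omega}$,
\[
\|\phi_r w_\varepsilon^\theta\|_{\mathcal{X}_0^{\theta}(\mathcal{C}_{\Omega})}^2=\|w_\varepsilon^\theta\|_{\mathcal{X}_0^{\theta}(\mathcal{C}_{\Omega})}^2-\kappa_\theta\!\int_{\mathcal{C}_{\Omega}}(1-\phi_r^2)y^{1-2\theta}|\n w_\varepsilon^\theta|^2+\kappa_\theta\!\int_{A_r}\Big(2\phi_r w_\varepsilon^\theta\,\n\phi_r\!\cdot\!\n w_\varepsilon^\theta+(w_\varepsilon^\theta)^2|\n\phi_r|^2\Big),
\]
where the right-hand norm of $w_\varepsilon^\theta$ is understood as $\kappa_\theta\int_{\mathcal{C}_{\Omega}}y^{1-2\theta}|\n w_\varepsilon^\theta|^2$ (the function does not vanish on the lateral boundary). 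The first error integral is at most $\int_{\{r_{xy}\ge r/2\}}y^{1-2\theta}|\n w_\varepsilon^\theta|^2=O(\varepsilon^{N-2\theta})$. On $A_r$ one has $w_\varepsilon^\theta\le C\varepsilon^{\frac{N-2\theta}{2}}$, and the $y^{1-2\theta}$-weighted measure of $A_r$ is $O(1)$, so $\int_{A_r}y^{1-2\theta}(w_\varepsilon^\theta)^2|\n\phi_r|^2=O(\varepsilon^{N-2\theta})$; the cross term is then $O(\varepsilon^{N-2\theta})$ by Cauchy--Schwarz against these two bounds. Summing proves the first displayed identity.

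For the trace estimate I would split $\|\phi_r u_\varepsilon^\theta\|_{L^2(\O)}^2=\int_{B_{r/2}}(u_\varepsilon^\theta)^2+\int_{B_r\setminus B_{r/2}}\phi_r^2(u_\varepsilon^\theta)^2$ (recall $B_r\subset\O$). On the annulus $\varepsilon^2+|x|^2\ge(r/2)^2$, so $(u_\varepsilon^\theta)^2\le C\varepsilon^{N-2\theta}$ and that term is $O(\varepsilon^{N-2\theta})$. For the main term, $x=\varepsilon z$ gives $\int_{B_{r/2}}(u_\varepsilon^\theta)^2=\varepsilon^{2\theta}\int_{B_{r/(2\varepsilon)}}(1+|z|^2)^{-(N-2\theta)}dz$. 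If $N>4\theta$ the integrand is in $L^1(\mathbb{R}^N)$ and $\int_{|z|>r/(2\varepsilon)}(1+|z|^2)^{-(N-2\theta)}dz=O((\varepsilon/r)^{N-4\theta})$, so the term equals $C\varepsilon^{2\theta}+O(\varepsilon^{N-2\theta})$. If $N=4\theta$, then $N-2\theta=N/2$ and $\int_{B_R}(1+|z|^2)^{-N/2}dz=\omega_{N-1}\log R+O(1)$, so the term equals $C\varepsilon^{2\theta}|\log\varepsilon|+O(\varepsilon^{2\theta})=C\varepsilon^{2\theta}|\log\varepsilon|(1+o(1))$. Combining the two pieces gives the stated trace expansion.

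The only nonroutine ingredient — and the step I expect to require care — is the uniform decay of $w_1^\theta$ together with the local integrability and tail decay of $y^{1-2\theta}|\n w_1^\theta|^2$ near $\{y=0\}$: away from the boundary these follow by differentiating the Poisson formula, but on $\{y=0\}$ one must use $-\kappa_\theta\lim_{y\to0^+}y^{1-2\theta}\partial_y w_1^\theta=(-\Delta)^\theta u_1^\theta=c\,(u_1^\theta)^{2_{\theta}^*-1}$, which forces $\partial_y w_1^\theta\sim y^{2\theta-1}$ as $y\to0^+$ with an $x$-profile decaying like $|x|^{-(N+2\theta)}$; this keeps $y^{1-2\theta}|\n w_1^\theta|^2$ locally integrable up to $\{y=0\}$ and preserves the decay $\int_{\{r_{xy}\ge R\}}y^{1-2\theta}|\n w_1^\theta|^2=O(R^{-(N-2\theta)})$ used throughout. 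These facts are available in \cite{BrCdPS,BCdPS}; granting them, the rest reduces to the scaling computations above.
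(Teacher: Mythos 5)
The paper does not actually prove this lemma---it states ``We recall now the following lemma proved in \cite{BCdPS}''---so there is no in-paper argument to compare against. Your reconstruction is exactly the standard proof from that reference: use the exact dilation scaling of $w_\varepsilon^\theta=E_\theta[u_\varepsilon^\theta]$, the pointwise decay of $w_1^\theta$ together with the tail decay of its weighted Dirichlet energy, then the usual cut-off error expansion for the energy and the explicit radial change of variables $x=\varepsilon z$ for the trace norm. It is correct in all essentials, including your (appropriate) flag that the only non-routine input is the uniform decay and weighted integrability of $y^{1-2\theta}|\nabla w_1^\theta|^2$ up to $\{y=0\}$.

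Two small points. First, $1-\phi_r^2$ is supported on all of $\{r_{xy}\ge r/2\}$, not merely on the annulus $A_r$; your subsequent bound $\int_{\{r_{xy}\ge r/2\}}y^{1-2\theta}|\nabla w_\varepsilon^\theta|^2=O(\varepsilon^{N-2\theta})$ shows you are actually using the correct support, so this is just a slip of wording. Second, in \cite{BCdPS} (and in the way the lemma is used to derive \eqref{estimaciones}) the quantity $\|w_\varepsilon^\theta\|^2$ on the right-hand side is the weighted Dirichlet energy over the full half-space $\mathbb{R}^{N+1}_+$, equal to $\kappa_\theta S(\theta,N)\|u_\varepsilon^\theta\|_{L^{2_\theta^*}(\mathbb{R}^N)}^2$; you interpret it as the integral over $\mathcal{C}_\Omega$. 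Since $B_r\subset\Omega$ forces $\mathbb{R}^{N+1}_+\setminus\mathcal{C}_\Omega\subset\{r_{xy}\ge r\}$, the two differ by $O(\varepsilon^{N-2\theta})$, so your version of the identity is equivalent and the downstream estimate \eqref{estimaciones} is unaffected.
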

\begin{remark}
Since $\|u_{\varepsilon}^{\theta}\|_{L^{2_{\theta}^*}(\mathbb{R}^N)}\sim C$ does not depend on $\varepsilon$ it follows that
\begin{equation*}
\|\phi_r u_{\varepsilon}^{\theta}\|_{L^{2_{\theta}^*}(\Omega)}=\|u_{\varepsilon}^{\theta}\|_{L^{2_{\theta}^*}(\mathbb{R}^N)}+O(\varepsilon^N)=C+O(\varepsilon^N).
\end{equation*}
\end{remark}
\noindent Next, we define the normalized functions,
\begin{equation*}
\eta_{\varepsilon}^{\theta}=\frac{\phi_r w_{\varepsilon}^{\theta}}{\|\phi_r u_{\varepsilon}^{\theta}\|_{2_{\theta}^*}}\quad \mbox{and} \quad \sigma_{\varepsilon}^{\theta}=\frac{\phi_r u_{\varepsilon}^{\theta}}{\|\phi_r u_{\varepsilon}^{\theta}\|_{2_{\theta}^*}},
\end{equation*}
then, because of Lemma \ref{estcol} the following estimates hold,
\begin{equation}\label{estimaciones}
\begin{split}
\|\eta_{\varepsilon}^{\theta}\|_{\mathcal{X}_0^{\theta}(\mathcal{C}_{\Omega})}^{2}&=S(\theta,N)+O(\varepsilon^{N-2\theta}),\\
\|\sigma_{\varepsilon}^{\theta}\|_{L^2(\Omega)}^{2}&=
\left\{
        \begin{tabular}{lc}
        $C \varepsilon^{2\theta}+O(\varepsilon^{N-2\theta})$ & if $N>4\theta$, \\
                $C \varepsilon^{2\theta}|\log(\varepsilon)|$ & if $N=4\theta$,
        \end{tabular}
        \right.\\
\|\sigma_{\varepsilon}^{\theta}\|_{L^{2_{\theta}^*}(\Omega)}&=1.
\end{split}
\end{equation}
To continue, we consider
\begin{equation}\label{test}
(\overline{w}_{\varepsilon}^{\beta},\overline{z}_{\varepsilon}^{\beta})=(M\eta_{\varepsilon}^{\beta},M\rho\eta_{\varepsilon}^{\beta}),
\end{equation}
with $\rho>0$ to be determined and $M\gg 1$ a constant such that $\Phi_{\g}^{\beta}(\overline{w}_{\varepsilon}^{\beta},\overline{z}_{\varepsilon}^{\beta})<0$.
Then, under this construction, we define the set of paths
$$\G_\e:=\{g\in C([0,1],\mathcal{X}_{0}^{\beta}(\mathcal{C}_{\Omega})\times \mathcal{X}_{0}^{\beta}(\mathcal{C}_{\Omega}))\,;\, g(0)=(0,0),\ g(1)=(\overline{w}_{\varepsilon}^{\beta},\overline{z}_{\varepsilon}^{\beta})\},$$
and we consider the minimax values
\begin{equation*}
c_\e=\inf_{g\in\G_\e} \max_{t \in [0,1]} \Phi_{\g}^{\beta}(g(t)).
\end{equation*}
Next we prove that, in fact,  $c_{\varepsilon}<c_{\beta}^*$ for $\varepsilon$ small enough.
\begin{lemma}\label{levelb}
Assume $p=2_{\beta}^*-1$. Then, there exists $\varepsilon>0$ small enough such that,
\begin{equation}\label{cotfunctional}
\sup_{t\geq0}\Phi_{\g}^{\beta}(t\overline{w}_{\varepsilon}^{\beta},t\overline{z}_{\varepsilon}^{\beta})<c_{\beta}^*,
\end{equation}
provided that $N>6\beta$.
\end{lemma}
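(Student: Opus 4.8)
The plan is a Brezis--Nirenberg type estimate along the ray $t\mapsto t(\overline{w}_\varepsilon^\beta,\overline{z}_\varepsilon^\beta)$, optimized over the coupling ratio $\rho$, with the hypothesis $N>6\beta$ entering only at the very last step through a comparison of orders of vanishing in $\varepsilon$. First I would evaluate the functional along this ray explicitly: since both components of $(\overline{w}_\varepsilon^\beta,\overline{z}_\varepsilon^\beta)=(M\eta_\varepsilon^\beta,M\rho\eta_\varepsilon^\beta)$ are multiples of the same function $\eta_\varepsilon^\beta$, and since $\|\sigma_\varepsilon^\beta\|_{L^{2_\beta^*}(\Omega)}=1$ by \eqref{estimaciones} while $p+1=2_\beta^*$, one gets
\begin{equation*}
\Phi_\g^\beta(t\overline{w}_\varepsilon^\beta,t\overline{z}_\varepsilon^\beta)=\frac{t^2}{2}\,A_\varepsilon-\frac{t^{2_\beta^*}}{2_\beta^*}\,M^{2_\beta^*},\qquad A_\varepsilon:=M^2\left[(1+\rho^2)\,\|\eta_\varepsilon^\beta\|_{\mathcal{X}_0^\beta(\mathcal{C}_\Omega)}^2-2\sqrt{\g}\,\rho\,\|\sigma_\varepsilon^\beta\|_{L^2(\Omega)}^2\right].
\end{equation*}
If $A_\varepsilon\le 0$ then $\sup_{t\ge0}\Phi_\g^\beta(t\overline{w}_\varepsilon^\beta,t\overline{z}_\varepsilon^\beta)=0<c_\beta^*$ and there is nothing to prove, so I would assume $A_\varepsilon>0$; then $t\mapsto\frac{t^2}{2}A_\varepsilon-\frac{t^{2_\beta^*}}{2_\beta^*}M^{2_\beta^*}$ has a single positive critical point, at which it is maximal, and a one-line computation gives
\begin{equation*}
\sup_{t\ge0}\Phi_\g^\beta(t\overline{w}_\varepsilon^\beta,t\overline{z}_\varepsilon^\beta)=\left(\frac12-\frac1{2_\beta^*}\right)\left[(1+\rho^2)\,\|\eta_\varepsilon^\beta\|_{\mathcal{X}_0^\beta(\mathcal{C}_\Omega)}^2-2\sqrt{\g}\,\rho\,\|\sigma_\varepsilon^\beta\|_{L^2(\Omega)}^2\right]^{\frac{2_\beta^*}{2_\beta^*-2}},
\end{equation*}
in which the constant $M$ has cancelled.

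The second step is to choose $\rho$ optimally. The bracket above is a quadratic in $\rho$ with positive leading coefficient $\|\eta_\varepsilon^\beta\|_{\mathcal{X}_0^\beta}^2$, so over $\rho>0$ it attains its minimum at the admissible value $\rho_\varepsilon=\sqrt{\g}\,\|\sigma_\varepsilon^\beta\|_{L^2}^2/\|\eta_\varepsilon^\beta\|_{\mathcal{X}_0^\beta}^2$, where it equals $\|\eta_\varepsilon^\beta\|_{\mathcal{X}_0^\beta}^2-\g\,\|\sigma_\varepsilon^\beta\|_{L^2}^4/\|\eta_\varepsilon^\beta\|_{\mathcal{X}_0^\beta}^2$. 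Fixing $\rho=\rho_\varepsilon$ and inserting the asymptotics of Lemma \ref{estcol}, namely $\|\eta_\varepsilon^\beta\|_{\mathcal{X}_0^\beta}^2=\kappa_\beta S(\beta,N)+O(\varepsilon^{N-2\beta})$ and, since $N>6\beta>4\beta$ places us in the first alternative, $\|\sigma_\varepsilon^\beta\|_{L^2}^2=C\varepsilon^{2\beta}+O(\varepsilon^{N-2\beta})$, one expands the quotient to obtain
\begin{equation*}
(1+\rho_\varepsilon^2)\,\|\eta_\varepsilon^\beta\|_{\mathcal{X}_0^\beta}^2-2\sqrt{\g}\,\rho_\varepsilon\,\|\sigma_\varepsilon^\beta\|_{L^2}^2=\kappa_\beta S(\beta,N)+O(\varepsilon^{N-2\beta})-\frac{\g\,C^2}{\kappa_\beta S(\beta,N)}\,\varepsilon^{4\beta}+o(\varepsilon^{4\beta}).
\end{equation*}

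This expansion is the crux, and it is where I expect the only real difficulty. In the scalar Brezis--Nirenberg problem the linear perturbation contributes a gain of order $\varepsilon^{2\beta}$, which beats the $O(\varepsilon^{N-2\beta})$ error exactly when $N>4\beta$; here the cooperative coupling behaves differently, because turning on the second component costs $\rho^2$ in Dirichlet energy while the bilinear term only rewards $\rho$, so the optimal balance forces $\rho_\varepsilon\sim\varepsilon^{2\beta}$ and the net gain is merely of order $\varepsilon^{4\beta}$. Hence one needs $\varepsilon^{N-2\beta}=o(\varepsilon^{4\beta})$, that is $N>6\beta$, precisely the hypothesis of the lemma. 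Granting $N>6\beta$, for $\varepsilon$ small the bracket is strictly below $\kappa_\beta S(\beta,N)$ yet stays close to this positive number, so in particular $A_\varepsilon>0$, which retroactively justifies the computations above; raising to the positive power $\frac{2_\beta^*}{2_\beta^*-2}=\frac{N}{2\beta}$ preserves the strict inequality and yields
\begin{equation*}
\sup_{t\ge0}\Phi_\g^\beta(t\overline{w}_\varepsilon^\beta,t\overline{z}_\varepsilon^\beta)<\left(\frac12-\frac1{2_\beta^*}\right)\bigl(\kappa_\beta S(\beta,N)\bigr)^{\frac{2_\beta^*}{2_\beta^*-2}}=c_\beta^*,
\end{equation*}
which is \eqref{cotfunctional}. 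Finally, since $M\gg1$ pushes the maximizing value of $t$ into $[0,1]$, the straight path $g(t)=t(\overline{w}_\varepsilon^\beta,\overline{z}_\varepsilon^\beta)$ lies in $\G_\e$ and realizes this value, so the minimax level satisfies $c_\e<c_\beta^*$ as well.
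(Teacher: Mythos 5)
Your proof is correct and follows the same Brezis--Nirenberg strategy as the paper: evaluate $\Phi_\gamma^\beta$ along the ray through $(M\eta_\varepsilon^\beta, M\rho\eta_\varepsilon^\beta)$, maximize over $t$ (the constant $M$ drops out), and then show the resulting bracket falls strictly below $\kappa_\beta S(\beta,N)$ for $\varepsilon$ small. The one place you diverge from the paper is in the choice of $\rho$: you minimize the quadratic $(1+\rho^2)\|\eta_\varepsilon^\beta\|^2-2\sqrt{\gamma}\rho\|\sigma_\varepsilon^\beta\|_{L^2}^2$ exactly at $\rho_\varepsilon=\sqrt{\gamma}\|\sigma_\varepsilon^\beta\|_{L^2}^2/\|\eta_\varepsilon^\beta\|^2\sim\varepsilon^{2\beta}$, whereas the paper takes the power ansatz $\rho=\varepsilon^\delta$, derives the condition $\min\{N-2\beta,2\delta\}>2\beta+\delta$, and then splits into the two cases $2\delta\lessgtr N-2\beta$, both of which force $N>6\beta$. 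Your version is cleaner: minimizing in $\rho$ shows directly that the best achievable gain from the coupling is $-\gamma\|\sigma_\varepsilon^\beta\|_{L^2}^4/\|\eta_\varepsilon^\beta\|^2\sim\varepsilon^{4\beta}$ (not $\varepsilon^{2\beta}$, because turning on the second component costs $\rho^2$ in energy while rewarding only $\rho$ in the bilinear term), and it must beat the $O(\varepsilon^{N-2\beta})$ cut-off error — which is exactly $N-2\beta>4\beta$. This makes the appearance of $6\beta$, as opposed to the scalar threshold $4\beta$, transparent; the paper's case analysis over $\delta$ obscures it. Your handling of the degenerate case $A_\varepsilon\le0$ and the retroactive verification that $A_\varepsilon>0$ is also sound.
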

\begin{proof}
Because of \eqref{estimaciones} with $\theta=\beta$, it follows that
\begin{align*}
g(t):=&\Phi_{\g}^{\beta}(t\overline{w}_{\varepsilon}^{\beta},t\overline{z}_{\varepsilon}^{\beta})\\
=&\frac{M^2t^2}{2}\left(\kappa_{\beta}\|\eta_{\varepsilon}^{\beta}\|_{\mathcal{X}_0^{\beta}(\mathcal{C}_{\Omega})}^{2}
+\rho^2 \kappa_{\beta}\|\eta_{\varepsilon}^{\beta}\|_{\mathcal{X}_0^{\beta}(\mathcal{C}_{\Omega})}^{2}
-2\sqrt{\g}\|\sigma_{\varepsilon}^{\theta}\|_{L^2(\Omega)}^{2}\right)-\frac{Mt^{2_{\beta}^*}}{2_{\beta}^*}\\
=&\frac{M^2t^2}{2}\left([\kappa_{\beta}S(\beta,N)+O(\varepsilon^{N-2\beta})]+\rho^2[\kappa_{\beta}S(\beta,N)+O(\varepsilon^{N-2\beta})]
-2\sqrt{\gamma}\|\sigma_{\varepsilon}^{\theta}\|_{L^2(\Omega)}^{2}\right)\\ & -\frac{M^{2_{\beta}^*}t^{2_{\beta}^*}}{2_{\beta}^*}.
\end{align*}
It is clear that $\displaystyle \lim_{t\to \infty} g(t)=-\infty$, therefore, the function $g(t)$ possesses a maximum value at the point
\begin{equation*}
t_{\gamma,\varepsilon}:=\left(\frac{M^2\left([\kappa_{\beta}S(\beta,N)+O(\varepsilon^{N-2\beta})]+\rho^2[\kappa_{\beta}S(\beta,N)+O(\varepsilon^{N-2\beta})]-2\sqrt{\gamma}\|\sigma_{\varepsilon}^{\theta}\|_{L^2(\Omega)}^{2}\right)}{M^{2_{\beta}^*}}\right)^{\frac{1}{2_{\beta}^*-2}}.
\end{equation*}
Moreover, at this point $t_{\gamma,\varepsilon}$,
\begin{equation*}
\begin{array}{rl}
g(t_{\gamma,\varepsilon})=\left(\frac{1}{2}-\frac{1}{2_{\beta}^*}\right)\Big(&\!\!\!\![\kappa_{\beta}S(\beta,N)+O(\varepsilon^{N-2\beta})]+
\rho^2[\kappa_{\beta}S(\beta,N)+O(\varepsilon^{N-2\beta})]\\
&\!\!\!\!-2\sqrt{\gamma}\|\sigma_{\varepsilon}^{\theta}\|_{L^2(\Omega)}^{2}\Big)^{\frac{2_{\beta}^*}{2_{\beta}^*-2}}.
\end{array}\end{equation*}
To finish it is enough to show that
\begin{equation}\label{ppte}
g(t_{\gamma,\varepsilon})<\left(\frac{1}{2}-\frac{1}{2_{\beta}^*}\right)\left(\kappa_{\beta}S(\beta,N)\right)^{\frac{2_{\beta}^*}{2_{\beta}^*-2}}=c_{\beta}^*,
\end{equation}
holds true for $\e$ sufficiently small and making the appropriate choice of $\rho>0$. Thus, sim\-pli\-fying \eqref{ppte}, we are left to choose $\rho>0$ such that
\begin{equation*}
O(\varepsilon^{N-2\beta})+\kappa_{\beta}S(\beta,N)\rho^2+O(\varepsilon^{N-2\beta})\rho^2<2\sqrt{\gamma}\rho \|\sigma_{\varepsilon}^{\theta}\|_{L^2(\Omega)}^{2},
\end{equation*}
holds true provided $\e$ is small enough. To this end, take $\rho=\varepsilon^{\delta}$ with $\delta>0$ to be determined, then, since
\begin{equation*}
O(\varepsilon^{N-2\beta})+\kappa_{\beta}S(\beta,N)\varepsilon^{2\delta}+O(\varepsilon^{N-2\beta+2\delta})=O(\e^{\tau}),
\end{equation*}
with $\tau=\min\{N-2\beta, 2\delta, N-2\beta+2\delta\}=\min\{N-2\beta, 2\delta\}$, the proof will be finished once $\delta>0$ has been chosen such that the inequality
\begin{equation}\label{toprove}
O(\e^{\tau})<2\sqrt{\gamma}\rho\|\sigma_{\varepsilon}^{\theta}\|_{L^2(\Omega)}^{2},
\end{equation}
holds true for $\e$ small enough. Now we use the estimates \eqref{estimaciones}. Then, if $N=4\beta$ inequality \eqref{toprove} reads
\begin{equation}\label{i.1}
O(\e^{\tau})<2C\sqrt{\gamma}\varepsilon^{2\beta+\delta}|\log(\varepsilon)|.
\end{equation}
Since $0<\e\ll 1$, inequality \eqref{i.1} holds for $\tau=\min\{2\beta,2\delta\}>2\beta+\delta$, that is impossible and, thus, inequality \eqref{toprove} can not hold for $N=4\beta$. On the other hand, if $N>4\beta$ inequality \eqref{toprove} has the form,
\begin{equation}\label{i.2}
O(\e^{\tau})<2C\sqrt{\gamma}\varepsilon^{2\beta+\delta}.
\end{equation}
Since $\e\ll 1$, inequality \eqref{i.2} holds for $\tau=\min\{N-2\beta,2\delta\}>2\beta+\delta$. Using the identity $\displaystyle\min\{a,b\}=\frac{1}{2}(a+b-|a-b|)$,
we arrive at the condition
\begin{equation}\label{i.3}
N-2\beta-|N-2\beta-2\delta|>4\beta.
\end{equation}
Finally, we have two options,
\begin{enumerate}
\item  $N-2\beta>2\delta$ combined with \eqref{i.3} provides us with the range,
\begin{equation}\label{i.4}
N-2\beta>2\delta>4\beta.
\end{equation}
Then $N>6\beta$ necessarily, so that we can choose a positive $\delta$ satisfying \eqref{i.4} and, hence, inequality \eqref{toprove} holds for $\e$ small enough.
\item $N-2\beta<2\delta$ combined with \eqref{i.3} implies that $2(N-2\beta)-4\beta>2\delta$, and hence,
\begin{equation}\label{i.5}
2(N-2\beta)-4\beta>2\delta>N-2\beta,
\end{equation}
Once again $N>6\beta$ necessarily, so that we can choose a positive $\delta$ satisfying \eqref{i.5} and, hence, inequality \eqref{toprove} holds for $\e$ small enough.
\end{enumerate}
Thus, if $N>6\beta$ we can choose $\rho>0$ and $\varepsilon>0$ small enough such that \eqref{cotfunctional} is achieved.
\end{proof}

Now, we are in the position to conclude the proof of the second main result of the paper. First we will focus on the particular case when $\alpha=2\beta$. Later on we will follow a similar
argument to prove the results when $\alpha\neq 2\beta$.

\begin{proof}[Proof of Theorem \ref{Th1}. Case $\alpha=2\beta$.]\hfill\break
By Lemma \ref{lezeroextension}, the functional $\Phi_{\g}^{\beta}$ satisfies the MP geometry. Because of MPT we have a PS sequence which by Lemma
\ref{levelb}, satisfies that the corresponding energy level is bellow the critical one.
Taking into account Lemma \ref{PScondition_extensionsistemabb}, this PS sequence satisfies the PS condition, hence we obtain a
critical point  $(w,z)\in\mathcal{X}_0^{\beta}(\mathcal{C}_{\O})\times\mathcal{X}_0^{\beta}(\mathcal{C}_{\O})$ for the functional $\Phi_{\g}^{\beta}$.
The rest of the proof follows as in the subcritical case.
\end{proof}

Now, we focus on the functional $\Phi_{\g}^{\alpha,\beta}$. For this case, we consider
\begin{equation}\label{test2}
(\overline{w}_{\varepsilon}^{\mu},\overline{z}_{\varepsilon}^{\beta})=(M\eta_{\varepsilon}^{\mu},M\rho\eta_{\varepsilon}^{\beta}),
\end{equation}
with $\rho>0$ to be determined and a constant $M\gg 1$ such that $\Phi_{\g}^{\alpha,\beta}(\overline{w}_{\varepsilon}^{\mu},\overline{z}_{\varepsilon}^{\beta})<0$. Let us notice that, by definition,
\begin{equation*}
\sigma_{\varepsilon}^{\mu}\sigma_{\varepsilon}^{\beta}
=\frac{\phi_ru_{\varepsilon}^{\mu}\phi_ru_{\varepsilon}^{\beta}}{\|\phi_ru_{\varepsilon}^{\mu}\|_{2_{\mu}^*}
\|\phi_ru_{\varepsilon}^{\beta}\|_{2_{\beta}^*}},
\end{equation*}
and, since $\mu:=\alpha-\beta$, we find
\begin{equation*}
u_{\varepsilon}^{\mu}u_{\varepsilon}^{\beta}=\frac{\varepsilon^{\frac{N-2\mu}{2}}}{(\varepsilon^2+|x|^2)^{\frac{N-2\mu}{2}}}
\frac{\varepsilon^{\frac{N-2\beta}{2}}}{(\varepsilon^2+|x|^2)^{\frac{N-2\beta}{2}}}=\frac{\varepsilon^{N-\alpha}}{(\varepsilon^2+|x|^2)^{N-\alpha}}
=\left(\frac{\varepsilon^{\frac{N-2(\alpha/2)}{2}}}{(\varepsilon^2+|x|^2)^{\frac{N-2(\alpha/2)}{2}}}\right)^2 \!=\!\left(u_{\varepsilon}^{\alpha/2}\right)^{2}.
\end{equation*}
Thus, applying \eqref{estimaciones} with $\theta=\frac{\alpha}{2}$, we conclude
\begin{equation}\label{estab}
\int_{\Omega}\sigma_{\varepsilon}^{\mu}\sigma_{\varepsilon}^{\beta}dx=C\|\sigma_{\varepsilon}^{\alpha/2}\|_{L^2(\Omega)}^{2}=\left\{
        \begin{tabular}{lc}
        $C \varepsilon^{\alpha}+O(\varepsilon^{N-\alpha})$ & if $N>2\alpha$, \\
                $C \varepsilon^{\alpha}|\log(\varepsilon)|$ & if $N=2\alpha$.
        \end{tabular}
        \right.\\
\end{equation}
Following the steps performed for the case $\alpha=2\beta$, we define the set of paths
$$\G_\e:=\{g\in C([0,1],\mathcal{X}_{0}^{\mu}(\mathcal{C}_{\Omega})\times \mathcal{X}_{0}^{\beta}(\mathcal{C}_{\Omega}))\,;\, g(0)=(0,0),\; g(1)=(M\eta_{\varepsilon}^{\mu},M\rho\eta_{\varepsilon}^{\beta})\},$$
and we consider the minimax values
$$c_\e=\inf_{g\in\G_\e} \max_{t \in [0,1]} \Phi_{\g}^{\alpha,\beta}(g(t)).$$
The final step of our scheme will be completed once we have shown that $c_{\varepsilon}<c_{\mu}^*$ for $\varepsilon$ small enough.
\begin{lemma}\label{levelab}
Assume $p=2_{\beta}^*-1$. Then, there exists $\varepsilon>0$ small enough such that,
\begin{equation}\label{cotfunctionalab}
\sup_{t\geq0}\Phi_{\g}^{\alpha,\beta}(t\overline{w}_{\varepsilon}^{\mu},t\overline{z}_{\varepsilon}^{\beta})<c_{\mu}^*,
\end{equation}
provided that $N>4\alpha-2\beta$.
\end{lemma}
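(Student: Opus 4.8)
\textbf{Proof proposal for Lemma \ref{levelab}.} The plan is to reproduce the argument of Lemma \ref{levelb} almost verbatim, the one new ingredient being the cross-term asymptotics \eqref{estab}; tracking its order against the remainder terms is what will produce the dimensional threshold $N>4\alpha-2\beta$.

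First I would evaluate $\Phi_{\g}^{\alpha,\beta}$ along the ray $(t\overline{w}_{\varepsilon}^{\mu},t\overline{z}_{\varepsilon}^{\beta})=(tM\eta_{\varepsilon}^{\mu},tM\rho\,\eta_{\varepsilon}^{\beta})$. Using that the traces of $\eta_{\varepsilon}^{\mu}$ and $\eta_{\varepsilon}^{\beta}$ on $\{y=0\}$ are $\sigma_{\varepsilon}^{\mu}$ and $\sigma_{\varepsilon}^{\beta}$, that $\|\sigma_{\varepsilon}^{\mu}\|_{L^{2_{\mu}^{*}}(\Omega)}=1$, and that here $p+1=2_{\mu}^{*}$, one gets
\[
g(t):=\Phi_{\g}^{\alpha,\beta}(t\overline{w}_{\varepsilon}^{\mu},t\overline{z}_{\varepsilon}^{\beta})=\frac{M^{2}t^{2}}{2}\,B_{\varepsilon}-\frac{M^{2_{\mu}^{*}}t^{2_{\mu}^{*}}}{2_{\mu}^{*}\,\gamma^{1-\beta/\alpha}},
\]
with
\[
B_{\varepsilon}:=\frac{\kappa_{\mu}\|\eta_{\varepsilon}^{\mu}\|_{\mathcal{X}_{0}^{\mu}(\mathcal{C}_{\Omega})}^{2}}{\gamma^{1-\beta/\alpha}}+\rho^{2}\,\frac{\kappa_{\beta}\|\eta_{\varepsilon}^{\beta}\|_{\mathcal{X}_{0}^{\beta}(\mathcal{C}_{\Omega})}^{2}}{\gamma^{\beta/\alpha}}-2\rho\int_{\Omega}\sigma_{\varepsilon}^{\mu}\sigma_{\varepsilon}^{\beta}\,dx.
\]
Since $g(t)\to-\infty$ as $t\to\infty$, $g$ has a unique maximum at some $t_{\gamma,\varepsilon}>0$, and an elementary computation — in which every power of $M$ cancels, exactly as in Lemma \ref{levelb} — gives
\[
g(t_{\gamma,\varepsilon})=\left(\frac12-\frac1{2_{\mu}^{*}}\right)\left(\gamma^{1-\beta/\alpha}\right)^{\frac{2}{2_{\mu}^{*}-2}}B_{\varepsilon}^{\frac{2_{\mu}^{*}}{2_{\mu}^{*}-2}}.
\]

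Comparing with the definition \eqref{levelmu} of $c_{\mu}^{*}$ and raising to the power $\tfrac{2_{\mu}^{*}-2}{2_{\mu}^{*}}$, the goal $g(t_{\gamma,\varepsilon})<c_{\mu}^{*}$ reduces to the single clean inequality $B_{\varepsilon}<\kappa_{\mu}S(\mu,N)/\gamma^{1-\beta/\alpha}$. Now I would insert the asymptotics: \eqref{estimaciones} with $\theta=\mu$ and $\theta=\beta$ for the two gradient norms, and \eqref{estab} for the cross term. Since $\alpha>\beta$ we have $4\alpha-2\beta>2\alpha$, so under the standing hypothesis $N>4\alpha-2\beta$ we sit automatically in the first regime of \eqref{estab}, that is $\int_{\Omega}\sigma_{\varepsilon}^{\mu}\sigma_{\varepsilon}^{\beta}\,dx=C\varepsilon^{\alpha}+O(\varepsilon^{N-\alpha})$ with $C>0$. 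After cancelling the common leading term $\kappa_{\mu}S(\mu,N)/\gamma^{1-\beta/\alpha}$, the inequality becomes
\[
O(\varepsilon^{N-2\mu})+\rho^{2}\,\frac{\kappa_{\beta}S(\beta,N)}{\gamma^{\beta/\alpha}}+\rho^{2}O(\varepsilon^{N-2\beta})<2C\rho\,\varepsilon^{\alpha}+\rho\,O(\varepsilon^{N-\alpha}).
\]
Fixing $\rho=\varepsilon^{\delta}$ with $\delta>0$ to be chosen, the left side is $O(\varepsilon^{\tau})$ with $\tau=\min\{N-2\mu,\,2\delta\}$ (the remainder $O(\varepsilon^{N-2\beta+2\delta})$ being of strictly higher order since $N>2\beta$), while the right side equals $(2C+o(1))\varepsilon^{\alpha+\delta}$ because $N>2\alpha$ forces $N-\alpha+\delta>\alpha+\delta$. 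Hence, for all small $\varepsilon$, the inequality holds as soon as $\tau>\alpha+\delta$, i.e.\ as soon as
\[
\alpha<\delta<N-2\mu-\alpha=N-3\alpha+2\beta.
\]
Such a $\delta$ exists precisely when $\alpha<N-3\alpha+2\beta$, that is when $N>4\alpha-2\beta$ — exactly the hypothesis. Choosing such a $\delta$ and $M\gg1$ so that $\Phi_{\g}^{\alpha,\beta}(\overline{w}_{\varepsilon}^{\mu},\overline{z}_{\varepsilon}^{\beta})<0$ as in \eqref{test2}, we obtain $\sup_{t\ge0}\Phi_{\g}^{\alpha,\beta}(t\overline{w}_{\varepsilon}^{\mu},t\overline{z}_{\varepsilon}^{\beta})=g(t_{\gamma,\varepsilon})<c_{\mu}^{*}$, which is \eqref{cotfunctionalab}. (When $\alpha=2\beta$, so $\mu=\beta$, this recovers $N>6\beta$, consistently with Lemma \ref{levelb}.)

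The step I expect to be the main obstacle is the last one: one must choose a single exponent $\delta$ so that the cross term, of size $\varepsilon^{\alpha+\delta}$, beats \emph{both} the $\mathcal{X}_{0}^{\mu}$-remainder $O(\varepsilon^{N-2\mu})$ and the quadratic-in-$\rho$ contribution, of size $\varepsilon^{2\delta}$, at the same time. The admissible window $\alpha<\delta<N-3\alpha+2\beta$ is nonempty only for $N>4\alpha-2\beta$, and this is precisely where the extra dimensional restriction (stronger than in the pure-power Brezis--Nirenberg case) originates; I expect this to be the delicate point that has to be emphasized. Everything else — the convexity of $g$, the cancellation of $M$, and the reduction of $g(t_{\gamma,\varepsilon})<c_{\mu}^{*}$ to $B_{\varepsilon}<\kappa_{\mu}S(\mu,N)/\gamma^{1-\beta/\alpha}$ — is routine bookkeeping, identical in form to Lemma \ref{levelb}.
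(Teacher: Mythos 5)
Your proposal is correct and follows essentially the same route as the paper: evaluate $\Phi_\g^{\alpha,\beta}$ along the ray, note that $M$ cancels at the maximum, reduce $g(t_{\gamma,\varepsilon})<c_\mu^*$ to the scalar inequality $B_\varepsilon<\kappa_\mu S(\mu,N)/\gamma^{1-\beta/\alpha}$, plug in the asymptotics \eqref{estimaciones} and \eqref{estab}, set $\rho=\varepsilon^\delta$, and extract the admissible window for $\delta$. The only cosmetic difference is that you solve $\min\{N-2\mu,2\delta\}>\alpha+\delta$ directly to get the window $\alpha<\delta<N-2\mu-\alpha$, whereas the paper rewrites the $\min$ via the identity $\min\{a,b\}=\tfrac12(a+b-|a-b|)$ and then splits into the two sign cases for $N-2\mu-2\delta$; both arrive at the same nonemptiness condition $N>4\alpha-2\beta$. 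You also correctly observe that $N>4\alpha-2\beta$ already forces $N>2\alpha$, so the logarithmic regime $N=2\alpha$ in \eqref{estab} (which the paper shows fails) need not be considered for the positive conclusion. Your bookkeeping of the $\rho$ factor in the cross term is cleaner than the paper's display (which momentarily drops it but is consistent thereafter), and your check that $O(\varepsilon^{N-2\beta+2\delta})$ is subleading because $N>2\beta$ is the right remark.
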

The proof is similar to the one performed for Lemma \ref{levelb}, but we include it for the reader's convenience.
\begin{proof}
Because of \eqref{estimaciones}, it follows that
\begin{align*}
g(t):=&\,\Phi_{\g}^{\alpha,\beta}(t\overline{w}_{\varepsilon}^{\mu},t\overline{z}_{\varepsilon}^{\beta})\\
=&\frac{M^2t^2}{2}\left(\frac{\kappa_{\mu}}{\gamma^{1-\beta/\alpha}}\|\eta_{\varepsilon}^{\mu}\|_{\mathcal{X}_0^{\mu}(\mathcal{C}_{\Omega})}^{2}+\frac{\rho^2\kappa_{\beta}}{\gamma^{\beta/\alpha}}\|\eta_{\varepsilon}^{\beta}\|_{\mathcal{X}_0^{\beta}(\mathcal{C}_{\Omega})}^{2}-2\|\sigma_{\varepsilon}^{\alpha/2}\|_{L^2(\Omega)}^{2}\right)-\frac{M^{2_{\mu}^*}t^{2_{\mu}^*}}{2_{\mu}^*\gamma^{1-\beta/\alpha}}\\
=&\frac{M^2t^2}{2}\left(\frac{1}{\gamma^{1-\beta/\alpha}}[\kappa_{\mu}S(\mu,N)+O(\varepsilon^{N-2\mu})]+\frac{\rho^2}{\gamma^{\beta/\alpha}}[\kappa_{\beta}S(\beta,N)+O(\varepsilon^{N-2\beta})]-2\|\sigma_{\varepsilon}^{\alpha/2}\|_{L^2(\Omega)}^{2}\right)\\
&-\frac{M^{2_{\mu}^*}t^{2_{\mu}^*}}{2_{\mu}^*\gamma^{1-\beta/\alpha}}.
\end{align*}
It is clear that $\displaystyle \lim_{t\to \infty} g(t)=-\infty$,
therefore, the function $g(t)$ possesses a maximum value at the
point, $$
\begin{array}{rl}
t_{\gamma,\varepsilon}\!=\Big(\frac{\gamma^{1-\beta/\alpha}}{M^{2_{\mu}^*-2}}\Big(
& \!\!\!\!\frac{1}{\gamma^{1-\beta/\alpha}}[\kappa_{\mu}S(\mu,N)\!+\!O(\varepsilon^{N-2\mu})]\\
 & \!\! \!\! +
\frac{\rho^2}{\gamma^{\beta/\alpha}}[\kappa_{\beta}S(\beta,N)+O(\varepsilon^{N-2\beta})]-2\|
\sigma_{\varepsilon}^{\alpha/2}\|_{L^2(\Omega)}^{2}\Big)\Big)^{\frac{1}{2_{\mu}^*-2}}.
\end{array}$$
Moreover, at this point $t_{\gamma,\varepsilon}$,
$$\begin{array}{rl}
h(t_{\gamma,\varepsilon})=\left(\frac{1}{2}-\frac{1}{2_{\mu}^*}\right)\Big(\Big(  \gamma^{1-\beta/\alpha}\Big)^{\frac{2}{2_{\mu}^*}}
\Big(&\!\!\!\!\frac{1}{\gamma^{1-\beta/\alpha}}[\kappa_{\mu}S(\mu,N)+O(\varepsilon^{N-2\mu})]\\
& \!\!\!\! +\frac{\rho^2}{\gamma^{\beta/\alpha}}[\kappa_{\beta}S(\beta,N)+O(\varepsilon^{N-2\beta})]-
2\|\sigma_{\varepsilon}^{\alpha/2}\|_{L^2(\Omega)}^{2}\Big)\Big)^{\frac{2_{\mu}^*}{2_{\mu}^*-2}}.
\end{array}$$
To complete the proof we must show that the inequality
\begin{equation}\label{lls}
h(t_{\gamma,\varepsilon})<c_{\mu}^*:=\frac{1}{\gamma^{1-\beta/\alpha}}\left(\frac{1}{2}-\frac{1}{2_{\mu}^*}\right)\left(\kappa_{\mu}S(\mu,N)\right)^{\frac{2_{\mu}^*}{2_{\mu}^*-2}},
\end{equation}
holds true for $\e$ small enough. Thus, simplifying \eqref{lls}, we are left to choose $\rho>0$ such that inequality
\begin{equation*}
O(\varepsilon^{N-2\mu})+\rho^2[\kappa_{\beta}S(\beta,N)+O(\varepsilon^{N-2\beta})]<2\gamma^{\beta/\alpha}\|\sigma_{\varepsilon}^{\alpha/2}\|_{L^2(\Omega)}^{2}.
\end{equation*}
holds true provided $\e$ is small enough. To this end, take $\rho=\e^{\delta}$ with $\delta>0$ to be determined, therefore, since
\begin{equation*}
O(\varepsilon^{N-2\mu})+\kappa_{\beta}S(\beta,N)\e^{2\delta}+O(\varepsilon^{N-2\beta+2\delta})=O(\e^{\tau}),
\end{equation*}
with $\tau=\min\{N-2\mu,2\delta,N-2\beta+2\delta\}=\min\{N-2\mu,2\delta\}$, the proof will be completed once we choose $\delta>0$ such that the inequality
\begin{equation}\label{toproveab}
O(\e^{\tau})<2\gamma^{\beta/\alpha}\|\sigma_{\varepsilon}^{\alpha/2}\|_{L^2(\Omega)}^{2},
\end{equation}
holds true for $\e$ small enough. We use once again the estimates \eqref{estimaciones}. If $N=2\alpha$, because of \eqref{estab}, inequality \eqref{toproveab} reads,
\begin{equation}\label{h.1}
O(\e^{\tau})<2\gamma^{\beta/\alpha}\e^{\alpha+\delta}|\log(\e)|.
\end{equation}
Since $\e\ll 1$, inequality \eqref{h.1} holds for $\tau=\min\{2\alpha-2\mu,2\delta\}=\min\{2\beta,2\delta\}>\alpha+\delta$. Using the identity $\displaystyle\min\{a,b\}=\frac{1}{2}(a+b-|a-b|)$, we find that $\tau>\alpha+\delta$ implies $\beta+\delta-|\beta-\delta|>\alpha+\delta$, which is impossible because $\alpha>\beta$. Therefore, \eqref{toproveab} can not hold if $N=2\alpha$. On the other hand, if $N>2\alpha$, inequality \eqref{toproveab} has the form,
\begin{equation}\label{h.2}
O(\e^{\tau})<2\gamma^{\beta/\alpha}\e^{\alpha+\delta}.
\end{equation}
Since $\e\ll 1$, inequality \eqref{h.2} holds if and only if
$\tau=\min\{N-2\mu,2\delta\}>\alpha+\delta$. Keeping in mind the identity $\displaystyle\min\{a,b\}=\frac{1}{2}(a+b-|a-b|)$, if $\tau>\alpha+\delta$ we arrive at the condition
\begin{equation}\label{h.3}
N-2\mu-|N-2\mu-2\delta|>2\alpha.
\end{equation}
Consequently, we have two options:
\begin{enumerate}
\item $N-2\mu>2\delta$ combined with \eqref{h.3} provides us with the range,
\begin{equation}\label{h.4}
N-2\mu>2\delta>2\alpha.
\end{equation}
Then $N>4\alpha-2\beta$ necessarily, so that we can choose a positive $\delta$ satisfying \eqref{h.4} and, hence, inequality \eqref{toproveab} holds for $\e$ small enough.
\item $N-2\mu<2\delta$ combined with \eqref{h.3} implies that $2(N-2\mu)-2\alpha>2\delta$, and hence,
\begin{equation}\label{h.5}
2(N-2\mu)-2\alpha>2\delta>N-2\mu.
\end{equation}
Once again $N>4\alpha-2\beta$ necessarily, so that we can choose a positive $\delta$ satisfying \eqref{h.5} and, hence, inequality \eqref{toproveab} holds for $\e$ small enough.
\end{enumerate}

\end{proof}
To conclude, we complete the proof of Theorem \ref{Th1}, by dealing with the remaining case $\alpha\neq2\beta$.
\begin{proof}[Proof of Theorem \ref{Th1}. Case $\alpha\neq2\beta$.]\hfill\break
By Lemma \ref{lezeroextension}, the functional $\Phi_{\g}^{\alpha, \beta}$ satisfies the MP geometry. Because of MPT we have a PS sequence which by Lemma
\ref{levelab}, satisfies that the corresponding energy level is bellow the critical one.
Taking into account Lemma \ref{PScondition_extensionsistemaab}, this PS sequence satisfies the PS condition, hence we obtain a
critical point  $(w,z)\in\mathcal{X}_0^{\mu}(\mathcal{C}_{\O})\times\mathcal{X}_0^{\beta}(\mathcal{C}_{\O})$ for the functional $\Phi_{\g}^{\alpha,\beta}$.
The rest of the proof follows as in the subcritical case.
\end{proof}


\begin{thebibliography}{9}

\bibitem{AAP} A. Ambrosetti, J. Garcia Azorero, I. Peral, \textit{Elliptic variational problems in $\mathbb{R}^N$ with critical growth.} Special issue in celebration of Jack K. Hale's 70th birthday, Part 1 (Atlanta, GA/Lisbon, 1998). J. Differential Equations {\bf 168} (2000), no. 1, 10--32.

\bibitem{AR} A. Ambrosetti, P.H. Rabinowitz, {\em Dual variational methods in critical point theory and applications.}
J. Funct. Anal., {\bf 14} (1973), 349--381.

\bibitem{BCdPS} B. Barrios, E. Colorado, A. de Pablo, U. S\'anchez,  \textit{On some critical problems for the fractional Laplacian
Operator}. J. Differential Equations {\bf 252} (2012), no. 11, 6133--6162.

\bibitem{BrCdPS} C. Br\"andle, E. Colorado, A. de Pablo, U. S\'anchez, \textit{A concave-convex elliptic problem involving the fractional Laplacian.}
Proc. Roy. Soc. Edinburgh Sect. A {\bf 143} (2013), no. 1, 39--71.

\bibitem{BN} H. Brezis, L. Nirenberg, \textit{Positive solutions of nonlinear elliptic equations involving critical Sobolev exponents}.
Comm. Pure Appl. Math. \textbf{36} (1983), no. 4, 437--477.

\bibitem{CaSi}X. Cabr\'e, Y. Sire, \emph{Nonlinear equations for fractional Laplacians, I:
Regularity, maximum principles, and Hamiltonian estimates.} Ann.
Inst. H. Poincar\'e Anal. Non Lin\'eaire  {\textbf 31}  (2014),  no. 1, 23--53.

\bibitem{CS} L. Caffarelli, L. Silvestre, \textit{An extension problem related to the fractional Laplacian}. Comm. Partial Differential Equations
\textbf{32} (2007), no. 7-9, 1245--1260.

\bibitem{EV} L.C. Evans, {\em Partial Differential Equations.} Graduate Studies in Mathematics {\bf 19}. American Mathematical Society,
Providence, RI, 1998. xviii+662 pp. ISBN: 0-8218-0772-2.

\bibitem{FKS} E.B. Fabes, C.E. Kenig, R.P. Serapioni, \textit{The local regularity of solutions of degenerate elliptic equations}. Comm. Partial
Differential Equations {\bf 7} (1982), no. 1, 77--116.

\bibitem{Lions} P.-L. Lions, \textit{The concentration-compactness principle in the calculus of variations. The limit case. II.} Rev. Mat.
Iberoamericana {\bf 1} (1985), no. 2, 45--121.

\bibitem{MusNa2} R. Musina, A. I. Nazarov (2014) \textit{On Fractional Laplacians--2}. Ann. Inst. H. Poincar\'e Anal. Non Lin\'eaire {\bf 33}
(2016),  no. 6, 1667--1673.

\bibitem{MusNa3} R. Musina, A. I. Nazarov. \textit{On fractional Laplacians--3.} ESAIM Control Optim. Calc. Var. {\bf 22}  (2016),  no. 3, 832--841.

\end{thebibliography}
\end{document}